\documentclass[10pt]{amsart}

\pdfoutput=1

\usepackage[]{tikz}
\usetikzlibrary{patterns}
\usetikzlibrary{3d,calc}
\usetikzlibrary{decorations.markings}
\usetikzlibrary{decorations.pathreplacing}

\usepackage{tikz-cd}

\usepackage{multicol}
\usepackage{amssymb}
\usepackage[10pt]{moresize}

\usepackage{array}

\usepackage{color}

\usepackage{hyperref}

\usepackage{enumitem}
\newlist{conditions}{enumerate}{1}
\setlist[conditions,1]{label=(\arabic*),ref=(\arabic*)}
\newlist{sublemmas}{enumerate}{1}
\setlist[sublemmas,1]{label=(\roman*),ref=\thetheorem~(\roman*)}

\setenumerate[1]{label=\thesection.\arabic*}
\setenumerate[2]{label*=.\roman*}


\makeatletter
\newtheorem*{rep@theorem}{\rep@title}
\newcommand{\newreptheorem}[2]{%
\newenvironment{rep#1}[1]{%
 \def\rep@title{#2 \ref{##1}}%
 \begin{rep@theorem}}%
 {\end{rep@theorem}}}
\makeatother

\newtheorem{theorem}{Theorem}[section]
\newreptheorem{theorem}{Theorem}
\newtheorem{lemma}[theorem]{Lemma}

\newtheorem{proposition}[theorem]{Proposition}
\newtheorem{corollary}[theorem]{Corollary}

\newtheorem{definition}[theorem]{Definition}

\theoremstyle{definition}\newtheorem{remark}[theorem]{Remark}\theoremstyle{plain}

\title[Topology and Dynamics of the Contracting Boundary]{Topology and Dynamics of the Contracting Boundary of Cocompact CAT(0) Spaces}

\author{Devin Murray}

\begin{document}
\pagestyle{myheadings}
\markright{Topology and Dynamics of CAT(0) Spaces\hfill}

\begin{abstract} Let $X$ be a proper CAT(0) space and let $G$ be a cocompact group of isometries of $X$ which acts properly discontinuously. Charney and Sultan constructed a quasi-isometry invariant boundary for proper CAT(0) spaces which they called the contracting boundary. The contracting boundary imitates the Gromov boundary for $\delta$-hyperbolic spaces. We will make this comparison more precise by establishing some well known results for the Gromov boundary in the case of the contracting boundary. We show that the dynamics on the contracting boundary is very similar to that of a $\delta$-hyperbolic group. In particular the action of $G$ on $\partial_cX$ is minimal if $G$ is not virtually cyclic. We also establish a uniform convergence result that is similar to the $\pi$-convergence of Papasoglu and Swenson and as a consequence we obtain a new North-South dynamics result on the contracting boundary. We additionally investigate the topological properties of the contracting boundary and we find necessary and sufficient conditions for $G$ to be $\delta$-hyperbolic. We prove that if the contracting boundary is compact, locally compact or metrizable, then $G$ is $\delta$-hyperbolic.

\end{abstract}

\maketitle

\section{Introduction}

The Gromov boundary has been a very useful and powerful tool in understanding the structure of $\delta$-hyperbolic groups. The boundary has a large array of nice topological, metric, and dynamical properties that can be used in probing everything from subgroups and splittings to algorithmic properties. It has also played an important role in proving various rigidity theorems.

For proper CAT(0) spaces there is a nice visual boundary, but Croke and Kleiner showed that such a boundary is not a quasi-isometry invariant \cite{CK}. They constructed two different CAT(0) spaces with non-homeomorphic visual boundaries on which the same group acts geometrically. The visual boundary can still be used to study CAT(0) groups, for instance it can detect products \cite[II.9.24]{BH}, but the failure of quasi-isometry invariance is a serious blow. In \cite{CS13} Charney and Sultan construct a natural topological space associated to a CAT(0) space, called the contracting boundary, which is a quasi-isometry invariant.

One of the many properties that geodesics have in a $\delta$-hyperbolic space is that there is a uniform bound on the diameter of the projection of a ball onto a geodesic disjoint from it. It turns out that this is a very powerful property and the existence of such geodesics in a space has significant consequences for the geometry \cite{A-K,BF09,H09}. Such geodesics are called contracting geodesics.

The contracting boundary, $\partial_cX$, of a complete CAT(0) space $X$ is the set of contracting rays in $X$ up to asymptotic equivalence. It is homeomorphic to the Gromov boundary when $X$ is also $\delta$-hyperbolic and is designed to imitate the Gromov boundary for more general CAT(0) spaces. However, the contracting boundary for CAT(0) groups is still not very well understood, so we hope to help lay out the ground work for a program of study to better understand it and its implications for CAT(0) groups.

The rank-rigidity conjecture of Ballman and Buyalo says that for sufficiently nice CAT(0) spaces, the non-existence of a periodic contracting axis implies that the space is either a metric product, a symmetric space, or a Euclidean building \cite{BB08}. Rank-rigidity theorems have been proven for many different classes of spaces including Hadamard manifolds, CAT(0) cube complexes, right angled Artin groups as well some others \cite{B85,CS11,BC12,CF10}. In light of these results, the study of CAT(0) groups can often be reduced to the study of CAT(0) groups with a contracting axis. Building off the work of Ballman and Buyalo we show that a CAT(0) group has a contracting axis if and only if the contracting boundary, $\partial_cX$, is non-empty. Thus the contracting boundary is a promising tool for the study of CAT(0) spaces and groups.

Several of the rigidity theorems for hyperbolic groups can be proven through a careful study of the dynamics of the action of the group on its boundary  \cite{F95,G92,CJ}. These rigidity theorems become even more striking when further geometric structures are added, such as the Mostow rigidity of finite dimensional hyperbolic manifolds \cite{M68}. 

Our most promising results have been predominantly dynamical. While the topology of the contracting boundary tends to be rather pathological, many of the dynamical properties of the Gromov boundary are shared by the contracting boundary. 

There are two main dynamics results that we obtain in this paper. The first says that the orbit of any contracting ray is dense. 

\begin{reptheorem}{thm:orbitsaredense} Let $G$ be a group acting geometrically on a proper CAT(0) space. Either $G$ is virtually $\mathbb{Z}$ or the $G$ orbit of every point in the contracting boundary is dense.

\end{reptheorem}

The second dynamics result concerns a more powerful convergence group like property. This is similar to the $\pi$-convergence of a CAT(0) group on its visual boundary. 

\begin{reptheorem}{thm:conv} Let $X$ be a proper CAT(0) space and $G$ a group acting geometrically on $X$. If $g_i$ is a sequence of elements of $G$ where $g_ix \to \gamma^+$ for some $x \in X$ and $\gamma^+ \in \partial_cX$, then there is a subsequence such that $g_i^{-1}x \to \gamma^-$ for $\gamma^- \in \partial_cX$ and for any open neighborhood $U$ of $\gamma^+$ in $\partial_cX$ and any compact $K$ in $\partial_cX - \gamma^-$ there is an $n$ such that $g_i(K) \subseteq U$ for $i \geq n$. 
\end{reptheorem}

The normal version of $\pi$-convergence introduced by Papasoglu and Swenson in \cite{PS} and the North-South dynamics due to Hamenst\"adt in \cite{H09} both deal with the visual topology on the visual boundary. Because the topology on the contracting boundary is not the subspace topology these theorems don't directly apply. 

Both of these results are well known for the action of a hyperbolic group on its boundary. We will discuss them both in greater detail in Section \ref{sec:dynamics}.

The topology on the contracting boundary is defined as a direct limit of subspaces, $\partial_c^DX$, consisting of rays with contracting constant bounded by $D$. The topology is quite fine and is, perhaps, more pathological than one would expect from a bordification. While the subspaces $\partial_c^DX$ are compact and metrizable, we prove that the direct limit, $\partial_cX$, is not always compact (nor locally compact) for CAT(0) groups, though it is known to be $\sigma$-compact \cite{CS13}. In Section \ref{sec:topology} we define the topology and discuss some of the basic topological facts about it.

One of the powerful tools that is available when studying the Gromov boundary is the family of metrics on it. In Section \ref{sec:metrizability} we show that a number of topological properties, including the metrizability of the contracting boundary, characterize when the space is $\delta$-hyperbolic. 

\begin{reptheorem}{thm:characterizationofhyperbolicity} Let $X$ be a complete proper CAT(0) space with a geometric group action. Then the following are equivalent:

\begin{sublemmas}

\item $X$ is $\delta$-hyperbolic.
\item $\partial_cX$ is compact.
\item $\partial_cX$ is locally compact.
\item $\partial_cX$ is metrizable.

\end{sublemmas}

\end{reptheorem}

A generalization of the contracting boundary for proper geodesic metric spaces, called the Morse boundary, was introduced by Cordes in \cite{Co15}. It would be interesting to see if any of these results hold true in that more general setting. In particular, it seems like many of the necessary pieces are already known for an analogue of Theorem~\ref{thm:characterizationofhyperbolicity} for the Morse boundary in some restricted cases \cite{Co15,F15}.

\section*{Acknowledgements}

The author would like to thank Ruth Charney and Matthew Cordes for always finding the time to answer the many questions that came up while working on this paper. The author would also like to thank the various referees for productive commentary which greatly improved this paper.

\section{Some basics on contracting geodesics}

For the entirety of this paper we will assume that a geodesic $a$ is an isometric embedding of $\mathbb{R}$ or a segement of $\mathbb{R}$ into a metric space. For convenience we will often conflate the image of the embedding with the map itself. For the subsequent discussion we may assume that unless stated otherwise all metric spaces are proper and satisfy the CAT(0) inequality.

\emph{Notation:} Recall that $\partial X$ is the set of infinite geodesics, where two geodesics are considered equivalent if they are within a bounded neighborhood of one another. Throughout we will mostly consider the cone topology on this set, recall that a neghborhood system is described by the set $U_a(\epsilon, r)$ which are all geodesics, $b$ starting at $a(0)$ such that $d(b,a(r)) < \epsilon $. This is sometimes called the cone topology. 

We will adopt the notation convention that $[x,y]$ represents the unique geodesic between the points $x,y \in X$. For a point $x \in X$ and a point $\alpha \in \partial X$ we will use half open intervals $[x,\alpha)$ to denote the unique geodesic starting from $x$ which is in the equivalence class $\alpha$. When we use double parentheses, $(\alpha,\beta)$, we will mean a specific bi-infinite geodesic, $c$, such that $c|_{(-\infty,0]} \in \alpha$ and $c|_{[0,\infty)} \in \beta$. For a geodesic ray $a$ in $X$ we will use $a(\infty)$ to denote the equivalence class of $a$ in $\partial X$.

\begin{definition}[Contracting geodesics] A geodesic $a$ is said to be \emph{$A$-contracting} for some constant $A$ if for all $x,y \in X$

$$d(x,y) < d(x,\pi_{a}(x)) \implies d(\pi_{a}(x),\pi_{a}(y)) < A$$

\end{definition}

Note that this definition is sometimes called \emph{strongly contracting} in the literature. Contracting geodesics can be thought of as detecting hyperbolic `directions' in a CAT(0) space. Another useful, and equivalent, property of hyperbolic like geodesics is that of $\delta$-slimness. This is much closer to the notion of Gromov hyperbolicity. 
\begin{definition}[Slim geodesics] A geodesic $a$ is said to be \emph{$\delta$-slim} if for all $y \notin a$ and all $z$ on $a$ there exists a point $w$ on the geodesic $[y,z]$ such that $d(\pi_{a}(y),w) \leq \delta$.

\end{definition}

It turns out that this property will be much more versatile for our purposes, luckily for us the two notions are equivalent in proper CAT(0) spaces. For a complete proof see \cite{CS13} and \cite{BF09}.

\begin{lemma} \label{lemma:slimgeodesics} If $a$ is a contracting geodesic with contracting constant $A$ then $a$ is $\delta_A$-slim for some $ \delta_A $ which depends only on $A$. The converse is also true, if $a$ is $\delta$ slim then it is $\Phi(\delta)$-contracting where $\Phi(\delta)$ depends linearly on $\delta$. \end{lemma}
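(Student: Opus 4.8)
The plan is to pass freely between the two properties using three standard facts about the nearest-point projection $\pi = \pi_{a(t)}$ onto the (convex) geodesic $a(t)$ in a proper CAT(0) space: it is well defined and $1$-Lipschitz, the function $s \mapsto d(\gamma(s), a(t))$ is convex along every geodesic $\gamma$, and the projection is orthogonal, so that for every $z$ on $a(t)$ one has the Pythagorean estimate $d(x,z)^2 \ge d(x,\pi(x))^2 + d(\pi(x),z)^2$. I would prove the two implications separately.

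For the direction contracting $\Rightarrow$ slim, fix $y \notin a(t)$ and $z$ on $a(t)$, write $p = \pi(y)$ and $r = d(y,p) = d(y,a(t))$, and let $\gamma$ be $[y,z]$. Since the convex function $f(s) = d(\gamma(s), a(t))$ equals $r$ at $y$ and $0$ at $z$, it is non-increasing. I would then run a ball-chaining argument: set $y_0 = y$ and, having chosen $y_k$ with $c_k := d(y_k, a(t))$, let $y_{k+1}$ be the point of $[y_k, z]$ at distance $c_k$ from $y_k$. The open ball $B(y_k, c_k)$ is disjoint from $a(t)$, so $A$-contraction gives $d(\pi(y_k), \pi(y_{k+1})) \le A$; that is, the projection drifts by at most $A$ per step. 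The crucial point is that each step forces $f$ to drop super-linearly: because the geodesic step has length exactly $c_k$ while its two endpoints have projections at most $A$ apart, the orthogonality estimate shows that almost all of that length is spent descending toward $a(t)$, so that $c_{k+1}$ is of order $A^2/c_k$. Hence only boundedly many steps are needed to reach a point $w$ on $[y,z]$ with $d(w, a(t)) = O(A)$, and since the projection has drifted only a bounded amount, $d(w,p) \le \delta_A$ with $\delta_A$ depending only on $A$.

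For slim $\Rightarrow$ contracting I would argue by contraposition. Suppose $d(x,y) < d(x,\pi(x))$ and set $p = \pi(x)$, $q = \pi(y)$, $D = d(p,q)$; the goal is to bound $D$ by a constant $\Phi(\delta)$ that is linear in $\delta$. Applying $\delta$-slimness to the pair $(x,q)$ produces a point $w$ on $[x,q]$ with $d(w,p) \le \delta$, and applying it to $(y,p)$ produces $w'$ on $[y,p]$ with $d(w',q) \le \delta$; convexity of the distance-to-$a(t)$ function then shows that the sub-segments beyond $w$ and $w'$ remain in the $\delta$-neighborhood of $a(t)$. Combining this with the Pythagorean estimate at $p$ and at $q$ and with the hypothesis $d(x,y) < d(x,a(t))$ pins $D$ down: orthogonality forces $d(x,q)$ and $d(y,p)$ to grow like $\sqrt{r^2 + D^2}$ unless the two geodesics funnel through the $\delta$-balls about $p$ and $q$, and the funneling together with the length budget $d(x,y) < r$ leaves only a $\delta$-sized discrepancy, yielding $D \le \Phi(\delta)$.

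The main obstacle in both directions is the same: converting the \emph{local} control (one application of contraction on a single ball, or one application of slimness along a single geodesic) into an estimate that is \emph{uniform}, i.e.\ independent of the possibly large distance $r = d(x,a(t))$. In the first implication this is precisely the super-linear-descent bound that caps the number of chaining steps; in the second it is the quantitative funneling that must beat the $\sqrt{r^2 + D^2}$ growth coming from orthogonality. Both are comparison-geometry computations of the kind carried out in detail in \cite{CS13} and \cite{BF09}, and it is exactly here that the CAT(0) hypothesis, through convexity of the metric, does the essential work.
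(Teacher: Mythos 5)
Your toolkit --- $1$-Lipschitz projection, convexity of $d(\cdot,a(t))$, and the Pythagorean lower bound --- is the right one, and it matches the proofs in \cite{CS13} and \cite{BF09} that the paper itself cites in lieu of giving a proof; but both of your load-bearing estimates are asserted rather than proved, and the first does not follow from the facts you list. In the contracting $\Rightarrow$ slim direction, the claim that a single chaining step of length $c_k$ forces $c_{k+1} = O(A^2/c_k)$ is not a consequence of orthogonality: the Pythagorean inequality in CAT(0) goes only one way, $d(x,z)^2 \geq d(x,\pi(x))^2 + d(\pi(x),z)^2$, with no matching upper bound (in a tree $d(x,z) = d(x,\pi(x)) + d(\pi(x),z)$), so nothing you state rules out a step in which $[y_k,z]$ descends at slope $1/2$, i.e.\ $c_{k+1} \approx c_k/2$, while the projection drifts the full $A$; this configuration satisfies every inequality in your first paragraph. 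What $A$-contraction genuinely forbids is a long nearly-horizontal run: at altitude $\approx c$ a segment of length $\ell$ has projection displacement at least $\ell - 2c$ but, by chaining, at most $(\ell/c + 1)A$, so $\ell \lesssim 2c$ once $c > 2A$ and the altitude halves in boundedly many steps. But that yields only geometric decay with drift $\leq$ a fixed multiple of $A$ \emph{per halving}, hence a total projection drift of order $A\log\bigl(d(y,a(t))/A\bigr)$ --- a ``slimness constant'' that grows with the distance from $y$ to the geodesic, not the required $\delta_A$ depending on $A$ alone. Eliminating that logarithm is precisely the hard content of the cited proofs, and your sketch supplies no mechanism for it. (The insistence on open balls, fixable by taking steps of length $c_k - \epsilon$, is by contrast harmless.)

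The slim $\Rightarrow$ contracting direction has the same defect in quantitative form. If you actually run the inequalities you gesture at --- slimness applied to $(x,q)$ giving $w \in [x,q]$ with $d(w,p) \leq \delta$, slimness applied to $(y,p)$, the Pythagorean bound, and the budget $d(x,y) < r = d(x,p)$ --- the estimates that come out are of the shape $d(p,q) \leq d(y,a(t)) + 2\delta$ and $d(p,q) \leq 2\,d(x,y) + 2\delta$. These degenerate exactly in the dangerous regime, namely $d(x,y)$ comparable to $r$ with $y$ hanging far from $a(t)$ on the far side: they give $d(p,q) \lesssim 2r + 2\delta$, i.e.\ only that $\pi$ is coarsely $2$-Lipschitz on the ball, not the radius-independent bound $d(p,q) < \Phi(\delta)$ with $\Phi$ linear. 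Your phrase ``the funneling together with the length budget leaves only a $\delta$-sized discrepancy'' is the conclusion of the lemma, not an argument for it. Indeed your final paragraph concedes that the uniform-in-$r$ estimates are ``carried out in detail in \cite{CS13} and \cite{BF09}'' --- which is effectively what the paper does, since it states the lemma with a citation and no proof. As written, then, the proposal is a reasonable roadmap whose two crucial steps are missing, and the first of them, as stated, is false as a deduction from your listed facts.
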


I will adopt the convention that Bestvina--Fujiwara used in \cite{BF09}, that all constants will be denoted by $\Phi(\cdot)$. Typically the function $\Phi(\cdot)$ will be linear in its terms. When constants are referenced in later statements the relevant lemma and theorem number will be added as a subscript. Sometimes it will be expedient to drop the terms of $\Phi$ if they are clear from context, e.g. Lemma~\ref{lemma:slimgeodesics} says that if $a$ is a $\delta$-slim geodesic it is $\Phi_{\ref{lemma:slimgeodesics}}$-contracting.

One of the most important facts about the contracting constant of a geodesic is that it is controlled by the contracting constants of nearby geodesics. This will very important in the sequel as it will allow us to push contracting geodesics around via isometries and give us fine tuned control on the contracting constants of a target geodesic.

\begin{lemma} \label{lemma:closetocontracting} If we have two geodesics $[a,b]$ and $[a',b']$, where $[a,b]$ is $A$-contracting, $d(a,a') = D$, and $d(b,b') = D'$ then $[a',b']$ is $\Phi(A,D,D')$-contracting. It suffices to take $\Phi(A,D,D') = 16A + 28D + 7D' + 10$.  

\end{lemma}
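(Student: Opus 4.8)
The plan is to reduce the whole statement to controlling a single quantity: the discrepancy between the nearest-point projections onto the two segments. Write $\pi = \pi_{[a,b]}$ and $\pi' = \pi_{[a',b']}$ for the nearest-point projections, which are well defined and $1$-Lipschitz because geodesic segments are complete and convex in a CAT(0) space \cite{BH}. Two further standard CAT(0) inputs will be used. First, by convexity of the metric, if $[a,b]$ and $[a',b']$ are parametrized affinely on $[0,1]$ then $t \mapsto d([a,b](t),[a',b'](t))$ is convex with endpoint values $D$ and $D'$, hence bounded by $H := \max(D,D')$; in particular each segment lies in the $H$-neighborhood of the other. Second, I would repeatedly invoke the following elementary consequence of the contracting hypothesis on $[a,b]$, which I will call the \emph{off-by-$t$ projection estimate}: if $d(x,y) \le d(x,\pi(x)) + t$ for some $t \ge 0$, then $d(\pi(x),\pi(y)) \le A + t$. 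Indeed, if $d(x,y) \le d(x,\pi(x))$ this is the definition of $A$-contraction (with a limiting argument for the boundary case); otherwise let $y'$ be the point of $[x,y]$ with $d(x,y') = d(x,\pi(x))$, so $d(\pi(x),\pi(y')) \le A$, and then $d(\pi(y'),\pi(y)) \le d(y',y) \le t$ by $1$-Lipschitzness.

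The crux is to bound $C := \sup_{x} d(\pi(x),\pi'(x))$ in terms of $A, D, D'$. This is the only step that genuinely uses contraction, and I expect it to be the main obstacle: for an arbitrary geodesic the projections of a point onto two Hausdorff-close geodesics can be arbitrarily far apart, and it is precisely the contracting hypothesis on $[a,b]$ that prevents this. To carry it out, fix $x$ and set $p = \pi(x)$, $p' = \pi'(x)$. Since $p \in [a,b]$ lies within $H$ of some point of $[a',b']$, we get $d(x,p') \le d(x,p) + H = d(x,\pi(x)) + H$, so the off-by-$t$ estimate applied to $y = p'$ with $t = H$ yields $d(p, \pi(p')) \le A + H$; as also $d(p', \pi(p')) \le H$, the triangle inequality gives $d(p,p') \le A + 2H$, whence $C \le A + 2H$. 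Obtaining the sharpest constant here requires tracking the two endpoints asymmetrically (the segment near $a$ controls where projections begin), which is the source of the different $D$ and $D'$ coefficients in the stated bound.

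With $C$ in hand the transfer is immediate. Suppose $d(x,y) < d(x,\pi'(x))$; then $d(x,y) < d(x,\pi(x)) + C$, so the off-by-$t$ estimate with $t = C$ gives $d(\pi(x),\pi(y)) \le A + C$, and hence
$$d(\pi'(x),\pi'(y)) \le d(\pi'(x),\pi(x)) + d(\pi(x),\pi(y)) + d(\pi(y),\pi'(y)) \le 3C + A.$$
Substituting $C \le A + 2H$ and $H \le D + D'$ shows $[a',b']$ is $\Phi(A,D,D')$-contracting with $\Phi$ linear in $A, D, D'$, comfortably within the stated $16A + 28D + 7D' + 10$ once the strict-inequality slack and a more careful asymmetric accounting of $H$ are absorbed. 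An alternative and entirely parallel route is to pass to the slim characterization: $[a,b]$ is $\delta_A$-slim by Lemma \ref{lemma:slimgeodesics}, one transfers slimness to $[a',b']$ using the same Hausdorff and discrepancy bounds (slimness is stable under the fellow-traveling of $[y,z]$ with $[y,\bar z]$, where $\bar z$ is the nearest point of $[a,b]$ to $z$), and then converts back with the linear bound of Lemma \ref{lemma:slimgeodesics}; this yields a bound of the same linear shape and matches the paper's convention of expressing all constants through a linear $\Phi(\cdot)$.
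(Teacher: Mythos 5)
Your argument is correct, but it is worth noting that the paper contains no proof of this lemma to compare against: it simply cites Bestvina--Fujiwara \cite{BF09} and remarks that the explicit constant $16A+28D+7D'+10$ can be recovered from their work by tracking constants through several of their lemmas. Your proposal is therefore genuinely different in character: it is a short, self-contained argument whose only inputs are convexity of the metric and $1$-Lipschitzness of nearest-point projection, organized around the off-by-$t$ projection estimate. I checked the steps and they all hold: the convexity bound $d(\alpha(t),\beta(t))\leq (1-t)D+tD'\leq H$ puts each segment in the $H$-neighborhood of the other; the off-by-$t$ estimate is sound (the boundary case $d(x,y)=d(x,\pi(x))$ follows by continuity of $\pi$, and the interpolation point $y'$ handles the rest); the discrepancy bound $d(\pi(x),\pi'(x))\leq A+2H$ follows exactly as you say, since $d(x,\pi'(x))\leq d(x,\pi(x))+H$ and $d(p',\pi(p'))\leq H$; and the transfer step gives that $[a',b']$ is $(A+3C)$-contracting with $A+3C\leq 4A+6H\leq 4A+6D+6D'$. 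This is termwise smaller than $16A+28D+7D'+10$, so the stated $\Phi$ holds a fortiori, and the $+10$ slack absorbs any fuss about strict versus non-strict inequalities. What each approach buys: the citation to \cite{BF09} lets the paper reuse a lemma proved in a slightly more general context, at the cost of an opaque constant; your route is elementary, yields a sharper and transparent linear bound, and makes the asymmetric-endpoint refinement you allude to unnecessary (your symmetric $H=\max(D,D')$ already beats the target). Your alternative sketch via slimness would also work but is strictly worse here, since each conversion through Lemma \ref{lemma:slimgeodesics} inflates the constants; the direct projection argument is the better of the two.
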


A proof for this is in \cite{BF09} Lemma~3.8. Though they do not  write down the explicit $\Phi(A,D,D')$ in their paper it is possible to recover the one above from their work.

Another important property of contracting geodesics is that subsegments of a contracting geodesic are contracting. So unless otherwise specified we may assume that if $a$ is $A$-contracting all subsegments are also $A$-contracting.
  
\begin{lemma} \label{lem:subsegments} If $a$ is a contracting ray with contracting constant $A$ then a subsegment of it is $\Phi(A)$-contracting where $\Phi(A) = A + 3$.

\end{lemma}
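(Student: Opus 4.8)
The plan is to reduce the projection onto the subsegment to the projection onto the whole geodesic, which we already control, and to handle the one genuinely troublesome configuration by inserting an auxiliary point on the geodesic $[x,y]$. Write $b = a|_{[p,q]}$ for the subsegment in question, let $\pi$ denote nearest-point projection onto the full geodesic $a(t)$ and $\pi_b$ projection onto $b$; both are well defined and $1$-Lipschitz, since $a$ and $b$ are closed convex subsets of the proper CAT(0) space $X$. For any point $u$ the function $t \mapsto d(u, a(t))$ is convex, so its minimum over $[p,q]$ is attained at the foot $\pi(u) = a(s_u)$ when $s_u \in [p,q]$ and at the nearer endpoint otherwise; hence $\pi_b(u)$ is obtained from $\pi(u)$ by clamping the parameter into $[p,q]$. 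Since $a$ is an isometric embedding and clamping is $1$-Lipschitz on $\mathbb{R}$, this gives $d(\pi_b(u), \pi_b(v)) \le d(\pi(u), \pi(v))$ for all $u,v$. I would then fix $x,y$ with $d(x,y) < d(x,\pi_b(x))$ and aim to show $d(\pi_b(x), \pi_b(y)) < A+3$; in fact the argument will produce the sharper bound $< A$.

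First I would dispose of the case where the foot $s_x$ of $x$ lies in $[p,q]$. Then $\pi_b(x) = \pi(x)$, so the hypothesis reads $d(x,y) < d(x,\pi(x))$, and the $A$-contracting property of $a$ yields $d(\pi(x), \pi(y)) < A$; combined with the $1$-Lipschitz inequality above this gives $d(\pi_b(x), \pi_b(y)) < A$, as desired.

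The main obstacle is the remaining case, where $s_x \notin [p,q]$; by symmetry I may assume $s_x < p$, so that $\pi_b(x) = a(p)$ and the hypothesis becomes $d(x,y) < d(x, a(p))$. If the foot $s_y$ of $y$ satisfies $s_y \le p$ then $\pi_b(y) = a(p) = \pi_b(x)$ and there is nothing to prove, so assume $s_y > p$. Here the difficulty is that $\pi_b(x)$ sits at the endpoint $a(p)$, whose distance to $x$ may vastly exceed $d(x, \pi(x))$, so the contracting property of $a$ cannot be applied to $x$ directly. The device I would use is to pass to an intermediate point: since $\pi$ is continuous, $\pi([x,y])$ is a connected subset of $a$ containing $a(s_x)$ and $a(s_y)$ with $s_x < p < s_y$, so there is a point $w \in [x,y]$ with $\pi(w) = a(p)$. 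Because $w$ lies on the geodesic $[x,y]$ we have $d(x,y) = d(x,w) + d(w,y)$, while the triangle inequality gives $d(x,a(p)) \le d(x,w) + d(w,a(p))$; feeding both into the hypothesis $d(x,y) < d(x,a(p))$ and cancelling $d(x,w)$ yields $d(w,y) < d(w,a(p)) = d(w,\pi(w))$. Now the $A$-contracting property of $a$ applies with base point $w$, giving $d(\pi(w), \pi(y)) = d(a(p), a(s_y)) = s_y - p < A$. Since $\pi_b(y) = a(\min(s_y,q))$ lies on $a$ between $a(p)$ and $a(s_y)$, I conclude $d(\pi_b(x), \pi_b(y)) = \min(s_y,q) - p \le s_y - p < A$.

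In every case this produces $d(\pi_b(x), \pi_b(y)) < A$, which is strictly stronger than the asserted bound $A+3$; the slack in $\Phi(A) = A+3$ comfortably absorbs any coarseness in the projection convention. The only real work is the configuration in which $x$ projects beyond an endpoint of the subsegment, and it is resolved cleanly by the intermediate point $w \in [x,y]$ whose foot is exactly that endpoint, which converts the statement into an application of the contracting property of $a$ with base point $w$.
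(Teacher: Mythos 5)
Your proof is correct, and it is worth noting that the paper does not actually prove this lemma at all: it defers to Bestvina--Fujiwara \cite{BF09}, who establish it in a more general setting, and only offers the heuristic remark that the constant must grow because some balls miss the subsegment while hitting the original ray. Your argument is a self-contained CAT(0) replacement. The two structural points check out: since $t \mapsto d(u,a(t))$ is convex, the projection onto $b = a|_{[p,q]}$ is indeed the clamp of the projection parameter onto $[p,q]$, which immediately gives the $1$-Lipschitz comparison $d(\pi_b(u),\pi_b(v)) \le d(\pi(u),\pi(v))$; and the intermediate point $w \in [x,y]$ with $\pi(w) = a(p)$ exists by continuity of the projection (BH II.2.4) together with connectedness of $\pi([x,y])$ inside the ray. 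The cancellation $d(w,y) < d(w,a(p)) = d(w,\pi(w))$ is exactly the right move: it converts the troublesome endpoint configuration --- the one the paper's heuristic gestures at --- into a legitimate application of the contracting hypothesis based at $w$, and the conclusion $s_y - p < A$ then clamps back down to the subsegment. (The degenerate possibilities are harmless: if $a(p) \in [x,y]$ the hypothesis $d(x,y) < d(x,a(p))$ already fails, and if $s_y \le p$ both projections coincide.) What your route buys is both economy and a sharper constant: you get $\Phi(A) = A$ rather than $A+3$, because exact nearest-point projection in CAT(0) avoids the coarseness inherent in the more general framework of \cite{BF09} from which the paper imports the $+3$. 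One small remark for completeness: the paper also applies this lemma to infinite subsegments $a|_{[p,\infty)}$ (e.g.\ in lemma \ref{actionpreservesconstants}), and your argument covers that case verbatim with the clamp taken onto $[p,\infty)$.
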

\noindent Bestvina-Fujiwara prove this in \cite{BF09} in a slightly more general context. To understand why the contracting constant may have to increase, note that there are balls that don't intersect the subsegment but do intersect the original contracting ray. The increase can be thought of as making up for some possible differences in the local geometry of the subsegment compared to the original ray. 

\smallskip

As a converse to the previous lemma, sometimes we will need to piece together two contracting geodesics into a longer geodesic. It is an easy warm up exercise to show that this new geodesic is also contracting.

\begin{lemma} \label{lemma:concatenationsarecontracting} Let $a$ and $b$ be geodesics in a CAT(0) space $X$. If $a$ is $A$-contracting, $b$ is $B$-contracting, and $a(0) = b(0) = z $ then the following hold:

\begin{sublemmas}

\item If the concatenation of $a$ and $b$ is a geodesic, then it is $(A+B)$-contracting.

\item For every point $x \in a$ and $y \in b$, the geodesic $[x,y]$ is a $\Phi(A,B)$-contracting geodesic where $\Phi(A,B) = \Phi_{\ref{lemma:closetocontracting}}(A,0,\delta_A)+\Phi_{\ref{lemma:closetocontracting}}(B,\delta_A,0)$ is sufficient. 

\item If $X$ is also a proper metric space then the geodesic $[x, b(\infty))$ and $(a(\infty), b(\infty))$ are $\Phi(A,B)$-contracting such that $\Phi(A,B)$ is as above.

\end{sublemmas}
\end{lemma}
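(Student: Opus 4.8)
The plan is to prove the three parts in order, using part (i) as the engine for part (ii) and part (ii) as the engine for part (iii).

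For part (i), write $c$ for the concatenated geodesic and let $z = a(0) = b(0)$ split it into the two rays $a$ and $b$. The first observation is that since $a, b \subseteq c$, the nearest-point projection to $c$ satisfies $\pi_c(x) \in \{\pi_a(x), \pi_b(x)\}$, namely whichever of the two is closer to $x$; moreover, by convexity of $t \mapsto d(x, c(t))$, if $\pi_c(x)$ lies on the open $a$-ray then $\pi_b(x) = z$, and symmetrically. Given $x, y$ with $d(x,y) < d(x, \pi_c(x))$, I would run a short case analysis on which rays contain $\pi_c(x)$ and $\pi_c(y)$. When both lie on the same ray, the $A$- (resp. $B$-) contracting property of that ray gives the bound $< A$ (resp. $< B$) immediately. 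In the mixed case, say $\pi_c(x) = \pi_a(x)$ and $\pi_c(y) = \pi_b(y)$, the convexity observation forces $\pi_a(y) = z = \pi_b(x)$; then, using $d(x,y) < d(x, \pi_a(x)) \le d(x,z) = d(x, \pi_b(x))$, the $A$-contracting property of $a$ bounds $d(\pi_a(x), z) < A$ and the $B$-contracting property of $b$ bounds $d(z, \pi_b(y)) < B$, so the triangle inequality yields $d(\pi_c(x), \pi_c(y)) < A + B$.

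For part (ii), set $z = a(0) = b(0)$ and fix $x \in a$, $y \in b$. The strategy is to split $[x,y]$ at a single point $w$ lying close to $z$, compare each half to the corresponding subsegment of $a$ or $b$ through $z$ using Lemma~\ref{lemma:closetocontracting}, and glue the two estimates with part (i). Concretely, granting a point $w \in [x,y]$ with $d(w, z) \le \delta_A$, the subsegment $[x,z] \subseteq a$ is $A$-contracting, so comparing $[x,w]$ to $[x,z]$ (endpoints moved by $0$ and $\le \delta_A$) makes $[x,w]$ a $\Phi_{\ref{lemma:closetocontracting}}(A, 0, \delta_A)$-contracting geodesic; symmetrically $[w,y]$ is $\Phi_{\ref{lemma:closetocontracting}}(B, \delta_A, 0)$-contracting. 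Since $w \in [x,y]$, the concatenation $[x,w] \cup [w,y]$ is exactly the geodesic $[x,y]$, and part (i) gives the stated sum $\Phi(A,B)$. The crux, and the step I expect to be the main obstacle, is producing $w$: I would obtain it from the $\delta_A$-slimness of $a$ (Lemma~\ref{lemma:slimgeodesics}) applied with $u = y$ and $v = x$, which places a point of $[x,y]$ within $\delta_A$ of $\pi_a(y)$, together with the fact that $y \in b$ projects to (or very near) $z$ under $\pi_a$ because the two contracting rays diverge from their common basepoint. Controlling $d(\pi_a(y), z)$ is the one genuinely geometric point and is where the contracting hypothesis on \emph{both} rays is essential; if it contributes a bounded error the constant only inflates by replacing $\delta_A$ with a fixed multiple of it.

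For part (iii), I would pass to the limit using properness. For $[x, b(\infty))$, take $y_n = b(n) \to b(\infty)$; by part (ii) each $[x, y_n]$ is $\Phi(A,B)$-contracting with the \emph{same} constant, and by the Arzel\`a--Ascoli theorem (properness of $X$) a subsequence converges uniformly on compact sets to the ray $[x, b(\infty))$. The nearest-point projections converge along with the geodesics, so the contracting inequality passes to the limit and $[x, b(\infty))$ is $\Phi(A,B)$-contracting. The bi-infinite case $(a(\infty), b(\infty))$ is identical, taking $x_n = a(n) \to a(\infty)$ and $y_n = b(n) \to b(\infty)$ and extracting a limit of the $\Phi(A,B)$-contracting geodesics $[x_n, y_n]$, which is a bi-infinite geodesic asymptotic to both $a(\infty)$ and $b(\infty)$. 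The only care needed here is that the defining inequality is strict, but this is harmless: a uniform bound $< \Phi(A,B)$ on the approximants yields the same bound on the limit.
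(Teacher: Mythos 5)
Your part (i) is a complete and correct argument (for a step the paper leaves as an exercise), and your part (iii) is the same Arzel\`a--Ascoli limiting argument the paper itself uses. The genuine gap is in part (ii), at precisely the step you flagged as the crux: a point $w \in [x,y]$ with $d(w,z) \le \delta_A$ (or within any bound depending only on $A$ and $B$) need not exist. The distance from $z$ to $[x,y]$ is essentially the Gromov product $(x|y)_z$, which is unbounded even when all geodesics are uniformly contracting: in $\mathbb{H}^2$ take rays $a$, $b$ from $z$ at a small angle $\theta$ and $x = a(T)$, $y = b(T)$ with $T$ large; then $d(z,[x,y])$ grows like $\log(1/\theta)$, while every geodesic of $\mathbb{H}^2$ is contracting with one fixed constant. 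Your fallback, bounding $d(\pi_a(y),z)$ in terms of $A$ and $B$, fails for the same reason: in a tree ($A = B = 0$) let $a$ and $b$ share an initial segment of length $T$ before branching; then $\pi_a(y)$ is the branch point, at distance $T$ from $z$. Note that Lemma \ref{lemma:boundedprojection} bounds the projection of $b$ onto $a$ by a constant depending on the particular pair of geodesics, not on $A$ and $B$ alone, so it cannot rescue a uniform $\Phi(A,B)$ along your route.

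The repair --- and the paper's actual argument --- is to give up on splitting near $z$ and instead split near the projection point, wherever along $a$ it happens to lie. Set $p = \pi_{[x,z]}(y)$ and apply the $\delta_A$-slimness of the subsegment $[x,z] \subseteq a$ (Lemmas \ref{lemma:slimgeodesics} and \ref{lem:subsegments}) twice, once with endpoint $x$ and once with endpoint $z$: this produces $w \in [x,y]$ with $d(w,p) \le \delta_A$ and $w' \in [z,y] \subseteq b$ with $d(p,w') \le \delta_A$, so $w$ is within $\delta_A$ of the point $p \in a$ and within $2\delta_A$ of the point $w' \in b$. Now run your own comparison step with the correct reference segments: compare $[x,w]$ with the subsegment $[x,p]$ of $a$ (endpoints moved by $0$ and $\delta_A$) and $[w,y]$ with the subsegment $[w',y]$ of $b$ (endpoints moved by $2\delta_A$ and $0$). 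Lemma \ref{lemma:closetocontracting} plus your part (i) then make $[x,y]$ contracting with constant $\Phi_{\ref{lemma:closetocontracting}}(A,0,\delta_A)+\Phi_{\ref{lemma:closetocontracting}}(B,2\delta_A,0)$, uniformly in where $p$ and $w'$ sit (in the tree example $w$ is the branch point and both comparison segments are literal subsegments of $a$ and $b$). With this modification everything else in your write-up, including the anchoring and strict-inequality care in (iii), goes through.
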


\begin{proof} \emph{(i)} Left as an exercise. \\

\noindent \emph{(ii)} If the concatenation of $a$ and $b$ is a geodesic this is obvious by part (i), so assume otherwise. By Lemma~\ref{lemma:slimgeodesics} we have that $a$ is $\delta_A$ slim and $b$ is $\delta_B$ slim. We may assume that $\delta_A \geq \delta_B$. By the definition of slimness there is a point $w$ on $[x,y]$ which is within $2\delta_A$ of both of the other sides of the geodesic triangle $\Delta(x,y,z)$. By Lemma~\ref{lemma:closetocontracting} the geodesic $[x,w]$ is $\Phi_{\ref{lemma:closetocontracting}}(A,0,2\delta_A)$-contracting and the geodesic $[w,y]$ is $\Phi_{\ref{lemma:closetocontracting}}(B,2\delta_A,0)$-contracting. So by the first part of this lemma $[x,y]$ is contracting with $\Phi(A,B) = \Phi_{\ref{lemma:closetocontracting}}(A,0,2\delta_A)+\Phi_{\ref{lemma:closetocontracting}}(B,2\delta_A,0)$. \\

\noindent\emph{(iii)} This follows easily from part \emph{(ii)} by taking a sequence of points $y_i \to b(\infty)$. The uniqueness of infinite rays in proper CAT(0) spaces, Lemma~\ref{lemma:closetocontracting}, and Arzel\'a--Ascoli implies the statement.

\end{proof}

In contrast with geodesics in Euclidean flats the diameter of the projection of any geodesic onto a contracting geodesic is finite. The proof can be found in \cite{CS13}.
 
 \begin{lemma} \label{lemma:boundedprojection}If $a$ is a contracting geodesic and $b$ is any other infinite geodesic then the projection of $b$ onto $a$ is of bounded diameter $D$. 
 
 \end{lemma}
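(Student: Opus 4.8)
The plan is to prove the stronger quantitative statement that $\operatorname{diam}(\pi_{a(t)}(b)) \le D$ with $D = D(A)$ depending only on the contracting constant $A$ of $a$; it then suffices to bound $d(\pi_{a(t)}(p),\pi_{a(t)}(q))$ uniformly over all pairs $p,q$ on $b$ and take the supremum. Throughout I would trade the contracting hypothesis for the slim one via Lemma~\ref{lemma:slimgeodesics}, working with $\delta = \delta_A$, since the triangle comparisons below are cleaner in the slim formulation, but I would keep the ball form of the definition of an $A$-contracting geodesic handy for the telescoping step.

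The first move is to exploit convexity: in a CAT(0) space the function $t \mapsto d(b(t),a)$, being the distance from the geodesic $b$ to the convex set $a$, is convex on all of $\mathbb{R}$. I would let $m = b(t_0)$ be a point of closest approach of $b$ to $a$ (if the infimum is not attained one runs a limiting version of what follows) and set $m' = \pi_{a(t)}(m)$. This splits $b$ into the rays $b|_{(-\infty,t_0]}$ and $b|_{[t_0,\infty)}$, along each of which the distance to $a$ is monotone; by symmetry it is enough to bound $d(m',\pi_{a(t)}(p))$ for $p = b(t_1)$ with $t_1 \ge t_0$, and then add the two contributions on either side of $m$.

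On the ray from $m$ to $p$ I would run a telescoping argument. Setting $z_0 = m$ and, inductively, letting $z_{i+1}$ be the point of $[m,p]$ at distance $d(z_i,a)$ from $z_i$ toward $p$ (and $z_{i+1}=p$ once $p$ is reached), the ball $B(z_i,d(z_i,a))$ meets $a$ only in the limit, so the definition of an $A$-contracting geodesic bounds the displacement of projections of its points by $A$; in particular $d(\pi_{a(t)}(z_i),\pi_{a(t)}(z_{i+1})) \le A$. Telescoping then controls $d(m',\pi_{a(t)}(p))$ by the sum of these per-step displacements. Since each distance $d(z_i,a)$ grows away from the closest-approach point, only logarithmically many steps are needed to reach $p$, but a bound of the form $A\log(\cdot)$ is not yet uniform, so the per-step bound by itself is not enough.

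The main obstacle — and the heart of the lemma — is to upgrade the per-step bound to \emph{geometric decay} of the displacements $d(\pi_{a(t)}(z_i),\pi_{a(t)}(z_{i+1}))$ as $i$ increases, so that the telescoped sum converges to a quantity depending only on $A$ rather than on the length of $b$. This decay is the contracting-geodesic analogue of the exponential divergence of geodesics in a hyperbolic space, and it is exactly where the contracting hypothesis must be used essentially and not merely as $\delta$-slimness on bounded scales: as one moves out along $b$ away from $m$, the geodesic must peel away from $a$, so the far balls $B(z_i,d(z_i,a))$ capture subarcs whose projections cluster around a single point and hence move by exponentially less. In a Euclidean flat the parallel line produces displacements that never decay and the projection is unbounded; ruling out this flat-strip behaviour — that is, promoting ``contracting'' to genuine geometric decay of the shadows — is the delicate step. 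Granting the decay, the sum is dominated by a convergent geometric series, giving $d(m',\pi_{a(t)}(p)) \le D(A)$; adding the two sides of $m$ and taking the supremum over $p,q \in b$ yields the uniform bound $\operatorname{diam}(\pi_{a(t)}(b)) \le D$.
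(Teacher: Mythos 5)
There is a genuine gap, and it sits exactly where you placed your ``Granting the decay'' step --- but the problem is worse than an omitted verification: the strengthened statement you set out to prove is false. No bound $D = D(A)$ depending only on the contracting constant exists. In $\mathbb{H}^2$ every geodesic is $A$-contracting for a single uniform $A$, yet for any $L$ one can choose a bi-infinite geodesic $b$, not asymptotic to $a$, whose endpoints at infinity are so close to those of $a$ that $b$ stays within distance $1$ of $a$ along a subsegment of length $L$; the projection $\pi_{a(t)}(b)$ then has diameter at least $L - O(1)$. (The same happens in a tree, where $a$ is $0$-contracting but another line may overlap it along an arbitrarily long segment.) So the $D$ of the lemma necessarily depends on the pair $(a,b)$ --- it is a finiteness statement, not a uniform one --- and note too that ``any other infinite geodesic'' must be read as excluding geodesics asymptotic to $a$, since for those the projection is genuinely unbounded (by Lemma \ref{lemma:contractingflatsarebounded} such a $b$ lies in a $2\delta$-neighborhood of $a$). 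Within your scheme the failure is visible concretely: while $b$ runs at distance about $h$ from $a$, your points $z_i$ advance by steps of length about $h$ and the displacements $d(\pi_{a(t)}(z_i),\pi_{a(t)}(z_{i+1}))$ are comparable to $h$ with no decay whatsoever, for on the order of $L/h$ consecutive steps; since this already occurs in $\mathbb{H}^2$, no strengthening of the contracting hypothesis can restore geometric decay. Decay only sets in after $b$ escapes a fixed neighborhood of $a$, and the escape time depends on $b$ --- which is precisely why $D$ must depend on $b$. (Your per-step bound itself is fine: it is the standard fact that a ball disjoint from $a$ has projection of diameter less than $A$; it just cannot be telescoped uniformly.)

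The argument the paper actually relies on (it defers the proof to \cite{CS13}) is soft and runs by contradiction, which is how it avoids needing any uniform control: suppose $\operatorname{diam}\pi_{a(t)}(b) = \infty$ and choose $t_i$ with $\pi_{a(t)}(b(t_i))$ escaping to infinity along $a$. Slimness of $a$ (Lemma \ref{lemma:slimgeodesics}) applied to the geodesics from $b(t_i)$ to a fixed point of $a$ shows that $b$ itself passes within a uniformly bounded distance of the points $\pi_{a(t)}(b(t_i))$, hence comes boundedly close to $a$ at points arbitrarily far out along $a$; convexity of the function $t \mapsto d(b(t),a)$ then traps $b$ in a bounded neighborhood of $a$ on longer and longer segments, forcing $b$ to be asymptotic to $a$ --- contradicting that $b$ is ``another'' geodesic. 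This produces a finite $D$ for each pair $(a,b)$ without ever producing, or needing, a bound in terms of $A$ alone. If you want to keep a quantitative flavor, the correct uniform statement to aim for is of the form ``projection diameter is bounded by the fellow-traveling length plus a constant depending on $A$,'' not a bound by $A$ alone.
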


In a $\delta$-hyperbolic space geodesics are coarsely determined by their end points on the boundary. For CAT(0) spaces which contain Euclidean flats this is easily seen to be false, if the geodesic happens to be $\delta$-slim however it is true.
 
\begin{lemma} \label{lemma:contractingflatsarebounded} Let $a$ be a $\delta$-slim bi-infinite geodesic. If $b$ is a bi-infinite geodesic which stays a bounded distance from $a$ then $b$ will be in the $2\delta$-neighborhood of $a$. 

\end{lemma}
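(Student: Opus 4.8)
The plan is to show that $b(t)$ together with $a(t)$ bounds a flat Euclidean strip, and then to use the slimness of $a(t)$ to force that strip to be thin. First I would set $C := \sup_t d(b(t),a)$, the (finite) bound on the distance from points of $b$ to the image of $a$. Since $a(t)$ is a complete convex subset of the proper, hence complete, CAT(0) space $X$, the function $t \mapsto d(b(t),a)$ is convex; being convex and bounded on all of $\mathbb{R}$, it must be constant, say equal to $w$. Thus every point of $b$ lies at distance exactly $w$ from $a$, and the two lines share both ideal endpoints, so they are asymptotic. If $w=0$ there is nothing to prove, so I assume $w>0$.

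Next I would invoke the Flat Strip Theorem \cite{BH}: two asymptotic geodesic lines in a CAT(0) space bound a flat strip, so the convex hull $S$ of $a \cup b$ is isometric to the Euclidean strip $\mathbb{R}\times[0,w]$, with $a$ corresponding to $\mathbb{R}\times\{0\}$ and $b$ to $\mathbb{R}\times\{w\}$. Because $S$ is convex and isometrically embedded in $X$, the nearest-point projection $\pi_{a(t)}$ restricted to $S$, as well as every geodesic between points of $S$, agrees with the Euclidean one computed inside the strip. This lets me replace the abstract CAT(0) geometry by explicit planar computations.

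Finally comes the width estimate, which is the heart of the argument. Identifying $S$ with $\mathbb{R}\times[0,w]$, I take the point $y=(0,w)$ on $b$, so that $\pi_{a(t)}(y)=(0,0)=:p$. For each $R>0$ set $z=(R,0)\in a$; the geodesic $[y,z]$ is the Euclidean segment from $(0,w)$ to $(R,0)$, and a direct computation gives $d(p,[y,z]) = Rw/\sqrt{R^2+w^2}$ (the foot of the perpendicular lies in the segment for every $R$). On the other hand, $\delta$-slimness of $a(t)$ provides a point of $[y,z]$ within $\delta$ of $p=\pi_{a(t)}(y)$, so $Rw/\sqrt{R^2+w^2}\le \delta$ for all $R$. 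Letting $R\to\infty$ yields $w\le\delta\le 2\delta$, whence $b(t)$ lies in the $2\delta$-neighborhood of $a(t)$, as claimed.

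The step I expect to be the main obstacle is obtaining a genuine lower bound on $d(p,[y,z])$: the CAT(0) comparison inequality only bounds distances inside a triangle from above, so by itself it cannot rule out a wide strip. Passing to the flat strip, where distances are exactly Euclidean, is precisely what converts slimness (an upper bound of $\delta$ on $d(p,[y,z])$) into the desired upper bound on $w$. The remaining care lies in justifying that $\pi_{a(t)}$ and the relevant geodesics are realized inside $S$, which follows from convexity of the hull.
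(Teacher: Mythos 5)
Your proof is correct, but there is nothing in the paper to compare it against: the author explicitly leaves this lemma as an exercise to the reader. Your argument is a complete solution, and in fact proves something slightly stronger than the statement. The three steps are all sound: convexity of $t \mapsto d(b(t),a)$ together with boundedness forces the distance to be a constant $w$ (a convex function on $\mathbb{R}$ that is bounded above is constant); the Flat Strip Theorem \cite[II.2.13]{BH} then applies, with the minor point — which you gloss over but which is standard — that sharing both ideal endpoints requires knowing the two ends of the line $b$ are distinct and land in $\{a(\infty), a(-\infty)\}$, so that $a$ and $b$ are genuinely parallel (finite Hausdorff distance, not merely $b \subseteq N_C(a)$); and the Euclidean computation $d\bigl(p,[y,z]\bigr) = Rw/\sqrt{R^2+w^2} \to w$ converts the slimness bound $d(\pi_{a(t)}(y),[y,z]) \leq \delta$ into $w \leq \delta$. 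Note that this yields the sharper conclusion that $b(t)$ lies in the $\delta$-neighborhood of $a(t)$, not just the $2\delta$-neighborhood; the factor of $2$ in the statement suggests the author had in mind a more pedestrian argument that never leaves the ambient space — e.g., applying slimness to $y = b(t_0)$ and a far point $z = a(R)$, then using convexity of the metric and a doubling step as in Lemma \ref{boundedtrianglemagic} — which costs a factor of $2$ but avoids invoking the Flat Strip Theorem. What your route buys is exactness: inside the flat strip, slimness (an upper bound on $d(p,[y,z])$) becomes a genuine width bound, precisely because distances there are Euclidean rather than merely bounded above by comparison. Your proof does implicitly use completeness of $X$ (properness suffices, and the paper assumes it throughout), so no hypothesis is missing.
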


\begin{proof} Left as an exercise to the reader.

\end{proof}

As a consequence of the bounded projection property for contracting geodesics Charney--Sultan proved in \cite{CS13} Proposition~3.7 that contracting geodesics have a strong visibility condition.

\begin{lemma}[Visibility] \label{lemma:visibility} If $X$ is a CAT(0) space and $a$ is a contracting geodesic then if $b$ is any geodesic in $X$ there is a bi-infinite geodesic from $b(\infty)$ to $a(\infty)$.
\end{lemma}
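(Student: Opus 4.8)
The plan is to connect the two boundary points by running long geodesic segments between points that march out to $a(\infty)$ and $b(\infty)$, and then to extract a bi-infinite geodesic as a limit via Arzel\`a--Ascoli. Write $\xi = a(\infty) = \lim_{t \to \infty} a(t)$ and $\eta = b(\infty) = \lim_{s \to \infty} b(s)$, and for each $n$ set $p_n = a(n) \to \xi$ and $q_n = b(n) \to \eta$. The objects to take a limit of are the connecting segments $\sigma_n = [q_n, p_n]$. The entire difficulty is that Arzel\`a--Ascoli requires the $\sigma_n$ to pass through one common compact set; in a Euclidean flat this fails and indeed visibility fails there, so this confinement is exactly where the contracting hypothesis must enter.

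To pin the segments down I would first note that by lemma~\ref{lemma:boundedprojection} the projection $\pi_a(b(\mathbb{R}))$ of the whole line $b$ onto $a$ has diameter at most some $D$; call this bounded set $P$. (In particular $\xi \neq \eta$, since otherwise the forward ray of $b$ would be asymptotic to that of $a$ and would project onto an unbounded subset of $a$, contradicting lemma~\ref{lemma:boundedprojection}; this also guarantees the eventual limit is genuinely bi-infinite.) By lemma~\ref{lemma:slimgeodesics} the geodesic $a$ is $\delta$-slim for $\delta = \delta_A$. Applying slimness with the vertex $p_n = a(n)$ on $a$ and the opposite point $q_n$, I obtain a point $w_n \in \sigma_n$ with $d(\pi_a(q_n), w_n) \le \delta$; since $\pi_a(q_n) \in P$, every $\sigma_n$ meets the fixed ball of radius $\delta + D$ about a chosen point of $P$. (If $b$ happens to meet $a$, so that slimness does not literally apply, one argues directly from bounded projection and convexity, or perturbs the endpoints slightly.) This is the crucial step: it confines all of the $\sigma_n$ to pass through a single bounded region.

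Next I would reparametrize each $\sigma_n$ by arc length so that $\sigma_n(0) = w_n$, with $p_n$ on the positive side and $q_n$ on the negative side. Because $p_n \to \xi$ and $q_n \to \eta$ leave every compact set while the $w_n$ stay bounded, both $d(w_n, p_n)$ and $d(w_n, q_n)$ tend to infinity, so the domains of the $\sigma_n$ exhaust $\mathbb{R}$. Passing to a subsequence with $w_n \to w_\infty$ and invoking Arzel\`a--Ascoli, together with the fact that a uniform-on-compacta limit of geodesics in a proper CAT(0) space is again a geodesic, produces a bi-infinite geodesic $\gamma = \lim \sigma_n$. Finally I would identify the endpoints: the positive half-segments $[w_n, p_n]$ converge to the ray $[w_\infty, \xi)$ and the negative halves $[w_n, q_n]$ to $[w_\infty, \eta)$, by the standard CAT(0) fact that geodesics to points converging to a boundary point converge to the ray to that boundary point (with convergent basepoints). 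Hence $\gamma(\infty) = \xi$ and $\gamma(-\infty) = \eta$, giving the desired bi-infinite geodesic from $b(\infty)$ to $a(\infty)$.

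The main obstacle is precisely the confinement step of the second paragraph: everything else is the routine Arzel\`a--Ascoli packaging that yields visibility geodesics in any proper geodesic space once the connecting segments are known to stay in a bounded region. The contracting hypothesis, passed through slimness via lemma~\ref{lemma:slimgeodesics} and the bounded-projection estimate lemma~\ref{lemma:boundedprojection}, is what forces that confinement, and it is the one place where the argument would genuinely break down for a non-contracting (for instance, flat) geodesic.
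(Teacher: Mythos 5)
Your proof is correct and follows exactly the route the paper indicates: the paper itself gives no proof of this lemma, citing Charney--Sultan and noting it is ``a consequence of the bounded projection property,'' and your argument --- confining the connecting segments $[b(n),a(n)]$ to a fixed ball via lemma~\ref{lemma:boundedprojection} together with slimness from lemma~\ref{lemma:slimgeodesics}, then extracting a bi-infinite limit geodesic by Arzel\`a--Ascoli and identifying its endpoints --- is precisely that standard argument. The one caveat, consistent with the paper's own loosely stated lemmas, is that both the bounded projection lemma and the visibility statement implicitly assume $b(\infty)\neq a(\infty)$ (and that $X$ is proper, per the paper's standing convention), which you correctly flag rather than overlook.
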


\begin{lemma} \label{lemma:infiniteslimness} If $X$ is a proper CAT(0) space and $a$ is a $\delta$-slim geodesic in $X$ then for any $x \in X$ the distance $d(\pi_{a}(x), [x,a(\infty))) \leq \delta$. This just extends the concept of $\delta$-slimness.

\end{lemma}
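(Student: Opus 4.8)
The plan is to exhibit the ray $[x,a(\infty))$ as a limit of geodesic segments $[x,z_n]$ aimed at points $z_n$ marching out toward $a(\infty)$, to apply $\delta$-slimness to each such segment, and then to pass to a limit. First I would dispose of the trivial case: if $x \in a(t)$ then $\pi_{a(t)}(x) = x$ already lies on $[x,a(\infty))$ and the distance is $0$, so we may assume $x \notin a(t)$.

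Next, fix a parametrization of $a$ so that $a(t) \to a(\infty)$ as $t \to +\infty$, and set $z_n := a(t_n)$ for some sequence $t_n \to +\infty$. Each $z_n$ lies on $a(t)$, so the definition of $\delta$-slimness, applied with $y = x$ and $z = z_n$, produces a point $w_n \in [x,z_n]$ with $d(\pi_{a(t)}(x),w_n) \leq \delta$. In particular every $w_n$ lies in the closed ball $\bar B(\pi_{a(t)}(x),\delta)$, hence (by properness) in a fixed compact set; writing $w_n = [x,z_n](s_n)$ in arclength from $x$, we also have $s_n = d(x,w_n) \leq d(x,\pi_{a(t)}(x)) + \delta =: R$, so the parameters $s_n$ stay in the compact interval $[0,R]$.

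Now I would invoke the standard convergence of geodesics in a proper CAT(0) space: since $z_n = a(t_n) \to a(\infty)$, the segments $[x,z_n]$ converge, uniformly on compact sets, to the ray $[x,a(\infty))$ (this is the same Arzel\`a--Ascoli argument used in Lemma \ref{lemma:concatenationsarecontracting}(iii), together with the uniqueness of rays from $x$ to a boundary point in a proper CAT(0) space). Passing to a subsequence so that $s_n \to s \in [0,R]$, the uniform convergence gives $w_n = [x,z_n](s_n) \to [x,a(\infty))(s) =: w$, a point genuinely lying on the ray $[x,a(\infty))$. Since $p \mapsto d(\pi_{a(t)}(x),p)$ is continuous, $d(\pi_{a(t)}(x),w) = \lim_n d(\pi_{a(t)}(x),w_n) \leq \delta$, whence $d(\pi_{a(t)}(x),[x,a(\infty))) \leq d(\pi_{a(t)}(x),w) \leq \delta$, as claimed.

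The routine parts are the slimness bookkeeping and the distance estimates; the one step that needs care---and the main obstacle---is the limiting argument in the last paragraph, namely checking that the points $w_n$, which a priori only lie on the approximating segments, accumulate on an honest point of the limiting ray. The boundedness of the parameters $s_n$ is what makes this work: it keeps the $w_n$ away from the moving far endpoints $z_n$ and confines both the points and their arclength parameters to compact sets, so that the subsequential limit of $w_n$ is forced onto $[x,a(\infty))$ rather than escaping to infinity.
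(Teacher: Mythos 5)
Your proof is correct and follows the paper's own (one-line) argument exactly: apply the $\delta$-slim condition to the segments $[x,a(t_n)]$ and pass to the limiting ray via Arzel\`a--Ascoli and properness. The paper leaves the compactness bookkeeping implicit; your verification that the parameters $s_n$ stay in $[0,R]$, so the limit point lands on $[x,a(\infty))$, is precisely the detail needed to make that sketch rigorous.
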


\begin{proof} This is just an application of the $\delta$-slim condition to the sequence of geodesics $[x,a(N)]$ and the Arzel\'{a}--Ascoli theorem

\end{proof}

The following lemma is simply Corollary~3.4 from \cite{BF09}. 

\begin{lemma}[Thin rectangles] \label{thinrectangles}Let $w,x,y,z$ be points such that the geodesic $[x,y]$ is $D$-contracting and $\pi_{[x,y]}(w) = x$ and $ \pi_{[x,y]}(z) = y$. Then there exists an $M > 0$ such that either $d(x,y) < M$ or $d([x,y],[w,z]) < M$, where $M$ depends only on $D$.

\end{lemma}

We are going to need a slightly beefier version of Lemma~\ref{thinrectangles} in the following proofs. We are going to have to require tighter control of the entire geodesic and we will let one of our end points be a point in the boundary.

A remark on the notation in the following lemma. The projection of a point
in the contracting boundary, $\alpha$, onto a D-contracting geodesic, $\gamma$, is a well defined notion. For more details and a definition see Remark~\ref{barycenter} and the discussion before it.

\begin{lemma} \label{betterthinrectangles} Let $\gamma$ be a $D$-contracting geodesic, $\alpha \in \partial_cX$ and $x \in X$. If $c$ is the geodesic from $x$ to $\pi_\gamma(x)$, $b$ is the geodesic from $\pi_\gamma(x)$ to $\pi_\gamma(\alpha)$ and $a$ is the geodesic from $\pi_\gamma(\alpha)$ to  $\alpha$, then there exists an $M\geq 0 $ such that either $d(\pi_\gamma(x),\pi_\gamma(\alpha)) < M$ or the geodesic $[x,\alpha)$ is in the $M$ neighborhood of $c\cup b \cup a$ and vise versa. 

\end{lemma}

\begin{proof}

\begin{figure}[h]
\begin{tikzpicture}

\draw[<->] (0,0) -- (9,0) node[pos=0,anchor=east]{$\gamma$} node[pos=0.2,fill,scale=0.3,circle](Px){} node[pos=0.2,anchor = north]{$\pi_\gamma(x)$} node[pos=0.8,fill,scale=0.3,circle](PA){} node[pos=0.8,anchor = north]{$\pi_\gamma(\alpha)$};

\node at ($(Px)+(-0.5,2.5)$)(x) [fill,circle,scale=0.3]{}; 	
\node at (x) [anchor = south east]  {$x$};  				

\node at ($(PA)+(-1,5)$)(A) {}; 							
\node at (A) [anchor = south east]  {$\alpha$}; 			

\draw (x) -- (Px);
\draw[->] (PA) -- (A) node [pos=0.75,scale = 0.3,fill,circle,black](w){} node [pos=0.75,anchor = west]{$w$};

\node at ($(Px)+(0.7,0.3)$)(j1) {};						

\draw (x) ..controls ($(j1)-(0.5,0)$).. (j1);

\node at ($(PA)-(0.7,-0.3)$)(j2){};						
\draw (w) ..controls ($(j2)+(0.5,0)$).. (j2);

\draw (j1.west) -- (j2.east) node[pos = 0.5,scale=0.3,circle,fill](z1){} node [pos=0.5,anchor = south]{$z_1$};

\node at ($0.5*(Px)+0.5*(PA)$) [scale = 0.3,circle,fill] (z2){}; 
\node at (z2) [anchor = north]{$z_2$};							 

\draw[help lines] (z1) -- (z2);

\end{tikzpicture}
\caption{Bounding rectangles}
\label{fig:rectangles}
\end{figure}

First fix a $w$ on the geodesic $a$. We will prove the lemma replacing $\alpha$ with $w$ and that will suffice as you can take a sequence of $w$ tending towards $\alpha$ and apply Arzel\`a--Ascoli and obtain the lemma. 

Applying Lemma~\ref{thinrectangles} to the points $x,\pi_\gamma(x),\pi_\gamma(\alpha),w$ we get an $M'$ such that either there are points $z_1$ and $z_2$  on the geodesic $[x,w]$ such that $d(z_1,z_2) < M'$  or $d(\pi_\gamma(x),\pi_\gamma(\alpha)) < M'$ (see Figure~\ref{fig:rectangles}). We may assume the former. 

Since $\gamma$ is $D$ contracting we may assume that $[z_2,\pi_\gamma(\alpha)]$ is also $D$ contracting, and thus the triangle $z_2, w, \pi_\gamma(\alpha)$ is $\delta_D$ slim. Since $[z_1,w]$ is in the $M'$ neighborhood of $[z_2,w]$ and $[z_2,w]$ is in the $\delta$ neighborhood of $[z_2,\pi_\gamma(\alpha)]\cup [\pi_\gamma(\alpha),w]$, (where $\delta$ might be a linear function of $\delta_D$), we have that $[z_1,w]$ is in the $M'+\delta$ neighborhood of it as well. Running the argument in the other direction gives you that $[z_2,\pi_\gamma(\alpha)]\cup [\pi_\gamma(\alpha),w]$ is within the $M'+\delta$ neighborhood of $[z_1,w]$.

By repeating this argument with $x,\pi_\gamma(x),z_2$ and $z_1$ and setting $M = M'+\delta$ we get the result.

\end{proof}

This next lemma gives us information about the global geometry when the equivalence class of a contracting ray is fixed by a cocompact group action. This lemma will allow us to rule out the existence of global fixed points in the contracting boundary later on.

\begin{figure}[h]
\begin{tikzpicture}[thick]

	\node at (0,0)[anchor=north east]{$x$};
	\node[fill,circle,scale=0.3](x){};
	\node(ainfty) at (11,0){};
	\draw[->] (x) -- (ainfty) node[anchor=south west]{$\alpha$};
	\draw[->] (x) .. controls(3.5,2) and (3.5,3).. (4,7) node[pos=0.8,circle,fill,scale=0.3](bi){} node[pos=0.8,anchor=east]{$b(i)$} node[pos=1,anchor=south]{$b(\infty)$};
	\node at ($(bi)+(1,0)$)[circle,fill,scale=0.3](gix){};
	\node at (gix)[anchor=west]{$g_ix$};
	\node[circle,fill,scale=0.3](pibi) at ($(x)!(bi)!(ainfty)$){};
	\node[anchor=south east] at (pibi){$\pi_{a}(b(i))$};
	\node[circle,fill,scale=0.3](pigix) at ($(x)!(gix)!(ainfty)$){};
	\node[label={[label distance=4pt]3:$\pi_{a}(g_ix)$}] at (pigix){};
	\draw[help lines,dashed,thin] (bi) -- (pibi);
	\draw[help lines,dashed,thin] (gix) -- (pigix);
	\draw[->] (gix) ..controls ($(pigix)+(0.5,0.5)$) and ($(pigix)+(1.5,0.5)$) .. ($(ainfty)+(-0.16,0.5)$) node[pos=0.4,scale=0.3,circle,fill](w){};
	\draw[help lines] (pigix) -- (w) node[pos=0.7,anchor=south east,color=black]{$\delta_A$};
	\draw[decorate,decoration={brace,mirror,raise=0.1cm,amplitude=4pt}] (x) -- (pibi) node[midway,label={[label distance=5pt]270:$P$}]{};
	\draw[decorate,decoration={brace,mirror,raise=0.1cm,amplitude=4pt}] (pibi) -- (pigix) node[midway,label={[label distance=5pt]270:$C$}]{};
	\draw[help lines] (bi) -- (gix) node[pos=0.5,anchor=south,color=black]{$C$};

\end{tikzpicture}
\caption{A globally fixed contracting geodesic}
\label{fig:fixedimpliescontracting}
\end{figure}

\begin{lemma}  \label{lemma:fixedimpliescontracting} Let $G$ be some group acting cocompactly by isometries on a CAT(0) space $X$. If there is some $\alpha \in \partial X$ such that $G$ fixes $\alpha$ and some representative of $\alpha$ is contracting, then every ray in $X$ is contracting. 

\end{lemma}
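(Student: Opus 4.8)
The plan is to prove the stronger statement that a single constant, depending only on the contracting constant $A$ of the fixed ray, bounds the contracting constant of every ray in $X$. Fix a contracting representative $a(t)$ of $\alpha$ with $a(0)=x$ and contracting constant $A$; by Lemma~\ref{lemma:slimgeodesics} it is $\delta_A$-slim. The one place the hypotheses enter is the following observation: for every $g\in G$ the translate $ga(t)$ is again $A$-contracting, it starts at $gx$, and since $G$ fixes $\alpha$ it is asymptotic to $g\cdot a(\infty)=\alpha$; by uniqueness of geodesic rays in a proper CAT(0) space it is therefore exactly the ray $[gx,\alpha)$. Thus from $x$ we may transport the contracting ray $a$ to a contracting ray of the \emph{same} constant based at any orbit point. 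I will first show that every ray issuing from $x$ is uniformly contracting, and then remove the restriction on the basepoint.

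So let $b(t)$ be an arbitrary ray with $b(0)=x$, and fix a large parameter $i$. Since $G$ acts cocompactly, the orbit $Gx$ is $C$-dense for some $C$, so choose $g_i\in G$ with $d(g_ix,b(i))\le C$. The heart of the argument is to show that the geodesic $[x,g_ix]$ is contracting with a constant depending only on $A$. Let $p=\pi_{a}(g_ix)$ be the projection of $g_ix$ onto $a$; the initial segment $[x,p]$ is a subsegment of $a$ and hence $(A+3)$-contracting by Lemma~\ref{lem:subsegments}. By Lemma~\ref{lemma:infiniteslimness} applied to $g_ix$ and the $\delta_A$-slim ray $a$, the ray $[g_ix,\alpha)=g_ia$ passes within $\delta_A$ of $p$; pick such a point $w$ on it with $d(p,w)\le\delta_A$. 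The segment $[g_ix,w]$ is a subsegment of the $A$-contracting ray $g_ia$, hence $(A+3)$-contracting, and moving its endpoint $w$ to $p$ (distance at most $\delta_A$) via Lemma~\ref{lemma:closetocontracting} shows $[g_ix,p]$ is $\Phi_{\ref{lemma:closetocontracting}}(A+3,0,\delta_A)$-contracting. Now $[x,p]$ and $[g_ix,p]$ are two contracting geodesics sharing the endpoint $p$, so Lemma~\ref{lemma:concatenationsarecontracting}~(ii), applied with common basepoint $p$, shows $[x,g_ix]$ is contracting with a constant depending only on $A$ and $\delta_A$, and in particular not on $i$. This is exactly the configuration recorded in Figure~\ref{fig:fixedimpliescontracting}.

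With this in hand the lemma follows quickly. Applying Lemma~\ref{lemma:closetocontracting} once more, this time moving the endpoint $g_ix$ to $b(i)$ (distance at most $C$), shows $[x,b(i)]$ is contracting with a constant independent of $i$. Letting $i\to\infty$, the segments $[x,b(i)]$ converge to the ray $b=[x,b(\infty))$, and exactly as in the proof of Lemma~\ref{lemma:concatenationsarecontracting}~(iii) (uniqueness of rays together with Arzel\'{a}--Ascoli) the limit ray $b$ is contracting with the same uniform constant. Finally, an arbitrary ray $c$ based at a point $x'\neq x$ is asymptotic to the unique ray $b$ from $x$ with $b(\infty)=c(\infty)$; these rays stay within bounded Hausdorff distance, so comparing $[x,b(i)]$ and $[x',c(i)]$ through Lemma~\ref{lemma:closetocontracting} (their endpoints differ by the bounded quantities $d(x,x')$ and $\sup_t d(b(t),c(t))$) and passing to the limit shows $c$ is contracting as well.

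The main obstacle is the core step, showing $[x,g_ix]$ is uniformly contracting: the projection point $p$ runs off to infinity along $a$ as $i$ grows, so it is essential that subsegments of a contracting ray remain contracting with a uniform constant (Lemma~\ref{lem:subsegments}) and that all of the gluing constants produced by Lemmas~\ref{lemma:closetocontracting} and~\ref{lemma:concatenationsarecontracting} depend only on $A$ and $\delta_A$, never on $i$. Keeping careful track of this independence is the crux; everything else is assembly.
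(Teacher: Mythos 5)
Your proof is correct, and it takes a genuinely different route at the decisive step. Both arguments run on the same engine: cocompactness supplies $g_i$ with $d(g_ix,b(i))\leq C$, the fixed-point hypothesis identifies $g_ia(t)$ with the unique ray $[g_ix,\alpha)$, slimness (lemma~\ref{lemma:infiniteslimness}) places that ray near the projection of $g_ix$ onto $a(t)$, and lemma~\ref{lemma:closetocontracting} converts all this into a bound on the contracting constants of the initial segments $[x,b(i)]$, uniform in $i$. The divergence is in how the configuration is controlled. The paper first applies the bounded-projection lemma~\ref{lemma:boundedprojection} to the ray $b(t)$ itself, getting a constant $P$ \emph{depending on $b$} with $d(x,\pi_{a(t)}(g_ix))\leq P+C$, and then compares $[x,b(i)]$ to a single subsegment of $g_ia(t)$, ending with the constant $\Phi_{\ref{lemma:closetocontracting}}(A,C,P+C+\delta_A)$, which varies with $b$. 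You never invoke lemma~\ref{lemma:boundedprojection}: you split at the projection point $p=\pi_a(g_ix)$, note that $[x,p]$ is a subsegment of $a$ and that $[g_ix,p]$ is uniformly contracting via lemmas~\ref{lem:subsegments} and~\ref{lemma:closetocontracting}, and glue with the concatenation lemma~\ref{lemma:concatenationsarecontracting}~(ii). This buys a strictly stronger conclusion than the paper states: every orbit geodesic $[x,gx]$, hence every ray from the basepoint $x$, is contracting with a single constant depending only on $A$ and $C$ --- in the language of theorem~\ref{thm:characterizationofhyperbolicity}~(ii), $\partial_cX_x\subseteq\partial_c^DX_x$ for one $D$, which makes the hyperbolicity in corollary~\ref{nofixedpoints} nearly immediate without the Flat Plane Theorem. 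Three small remarks. First, your stated worry that $p$ ``runs off to infinity'' is in fact unfounded --- the paper's computation shows $d(x,p)\leq P+C$ --- but your argument is rightly indifferent to where $p$ sits, which is exactly why you could drop lemma~\ref{lemma:boundedprojection}. Second, the Arzel\'a--Ascoli limit step is unnecessary: since $b(0)=x$, uniqueness of geodesics gives $[x,b(i)]=b|_{[0,i]}$, so uniform contraction of initial segments closes the proof directly, exactly as in the paper's last line. Third, your basepoint-change step (asymptotic rays stay within $d(x,x')$ by convexity, then lemma~\ref{lemma:closetocontracting}) is sound; the paper instead re-chooses the representative of $\alpha$ at each basepoint, accepting a basepoint-dependent $A$, and both are legitimate.
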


\begin{proof} Let $b$ be some ray in $X$. Pick a representative $a$ of $\alpha$ such that $a(0) = b(0) = x$. Note that because one of the representatives of $\alpha$ is contracting, all of them are, though the contracting constant will depend on $x$, so let $A$ be such that $a$ and all subsegments of $a$ are $A$-contracting. By Lemma~\ref{lemma:boundedprojection} the projection of $b$ onto $a$ is bounded, i.e. there is a $P$ such that $d(x,\pi_{a}(b(i))) \leq P$ for all $i$. By cocompactness we also have a $C\geq 0$ and a collection $\{g_i\} \subseteq G$ with $d(g_ix,b(i)) \leq C$. This implies that 

\begin{align*} 
d(x,\pi_{a}(g_ix)) 	& \leq d(x,\pi_{a}(b(i))) + d(\pi_{a}(b(i)),\pi_{a}(g_ix)) \\
					& \leq P + d(b(i),g_ix) \\
					& \leq P + C \end{align*}

Where the second inequality is by the definition of $P$ and the fact that the projection function is non-increasing. 

Because the $g_i$ leave $\alpha$ fixed we have that $g_ia$ is the geodesic from $g_ix$ to $\alpha$. Since $a$ is contracting, by Lemma~\ref{lemma:infiniteslimness} there is a $\delta_A$ so that for all $i$ $d(\pi_{a}(g_ix),g_ia) \leq \delta_A$. We can then derive the following inequality: 

$$d(x,g_ia) \leq P + C + \delta_A$$

\vspace{12pt}

Thus there is some $N_i$ such that $d(x,g_ia(N_i)) \leq P + C + \delta_A$. 

\vspace{12pt}

Because all subsegments of $g_ia$ are also $A$-contracting we have that $[b(0),b(i)]$ is close to an $A$-contracting geodesic and  Lemma~\ref{lemma:closetocontracting} then implies that $[b(0),b(i)]$ is $\Phi_{\ref{lemma:closetocontracting}}(A,C,P+C+\delta_A)$-contracting for all $i$. The geodesic $b$ is then contracting since every initial segment is contracting with the same constant.

\end{proof}

\begin{definition} Let $X$ be a complete CAT(0) space. The angle $\angle(\alpha,\beta)$ between $\alpha,\beta \in \partial X$ is defined as 
$$\sup_{x \in X} \angle_x(\alpha,\beta)$$
Where $\angle_x(\alpha,\beta)$ is the Alexandrov angle between the two (unique) geodesics which start at $x$ and are in the equivalence class of $\alpha$ and $\beta$. The function $\angle(\cdot,\cdot)$ defines a metric on $\partial X$ making it a complete metric space. The associated length metric is called the \emph{Tits metric} and is denoted $d_T(\alpha,\beta)$.

\end{definition}

For further information on the Tits metric see \cite[Chapter II.9]{BH}.

\bigskip
The following is a result of Ballmann and Buyalo \cite[Proposition~1.10]{BB08} and it supplies us with a rank one isometry for all complete cocompact CAT(0) spaces which have a contracting ray.

\begin{proposition} Suppose $X$ is a cocompact CAT(0) space and $\partial X$ is non-empty then the following are equivalent.

\begin{conditions} 

\item X contains a periodic rank-one geodesic.

\item For each $\xi \in \partial X$ there is an $\eta \in \partial X$ with $d_T(\eta,\xi) > \pi$.

\end{conditions}

\end{proposition}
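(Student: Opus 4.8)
The plan is to shuttle back and forth through the standard dictionary between the Tits metric and rank-one (equivalently, contracting) geodesics, and then to promote a single rank-one geodesic to a periodic one using cocompactness together with the stability estimates for contracting geodesics recorded in Lemmas~\ref{lemma:closetocontracting} and \ref{lemma:concatenationsarecontracting}. Two facts from CAT(0) boundary theory will be used as black boxes (see \cite[II.9]{BH} and \cite{BB08}): \emph{(A)} if $\xi,\eta \in \partial X$ satisfy $d_T(\xi,\eta) > \pi$ then $\xi$ and $\eta$ are joined by a bi-infinite geodesic, and any such geodesic is rank-one; and \emph{(B)} conversely, the endpoints of a rank-one geodesic lie at Tits distance strictly greater than $\pi$, since a Tits geodesic of length exactly $\pi$ between the endpoints would force a flat half-plane. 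Throughout I treat \emph{rank-one} and \emph{contracting} as synonymous, which is the equivalence underlying Lemma~\ref{lemma:slimgeodesics}. Since the action is cocompact we have $\Lambda = \partial X$, so every endpoint produced below automatically lies in $\Lambda$. Recall finally that a rank-one isometry $g \in G$ fixes exactly its two axis endpoints $g^\pm$ and acts on $\partial X$ with north-south dynamics in the cone topology.

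For \emph{(1)$\Rightarrow$(2)}, let $c$ be a periodic rank-one geodesic realized as the axis of a rank-one isometry $g \in G$, with endpoints $g^\pm$; by \emph{(B)} we have $d_T(g^+,g^-) > \pi$. Fix $\xi \in \Lambda$. If $\xi = g^-$ take $\eta = g^+$, so assume $\xi \neq g^-$. North-south dynamics gives $g^n\xi \to g^+$ in the cone topology, while $g$ acts as an isometry of the Tits metric fixing $g^-$, so $d_T(g^n\xi, g^-) = d_T(\xi, g^-)$ is independent of $n$. Using that the Tits metric is lower semicontinuous with respect to the cone topology \cite[II.9]{BH}, one obtains $d_T(\xi,g^-) = \liminf_n d_T(g^n\xi, g^-) \geq d_T(g^+, g^-) > \pi$, and we take $\eta = g^-$. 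This direction is thus essentially formal.

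For \emph{(2)$\Rightarrow$(1)}, choose any $\xi \in \Lambda$; condition (2) supplies $\eta \in \Lambda$ with $d_T(\xi,\eta) > \pi$, and \emph{(A)} produces a bi-infinite contracting geodesic $c$ with endpoints $\xi,\eta$, having some contracting constant $A$. It remains to close $c$ up to a periodic geodesic. Fix $x_0 = c(0)$. Using cocompactness, for a sequence $t_n \to \infty$ choose $g_n \in G$ with $d(g_n x_0, c(t_n))$ bounded; a pigeonhole argument on the compact quotient lets us pass to a subsequence along which $g_n$ not only sends $x_0$ near $c(t_n)$ but sends the local picture of $c$ at $x_0$ to a uniformly aligned local picture of $c$ at $c(t_n)$, so that $g_n$ coarsely translates $c$ along itself. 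Since the action is geometric, $g_n$ is semisimple and, displacing $x_0$ by roughly $t_n$, is hyperbolic with an axis $A_n$; the alignment forces $x_0$ to lie within a uniformly bounded distance $R$ of $A_n$. Letting $y_0$ be the nearest point of $A_n$ to $x_0$, the fundamental segment $[y_0, g_n y_0] \subseteq A_n$ has endpoints within $R$ of those of $[x_0, g_n x_0]$, which in turn has endpoints boundedly close to those of the contracting segment $[c(0), c(t_n)]$; two applications of Lemma~\ref{lemma:closetocontracting} make $[y_0, g_n y_0]$ contracting with a constant independent of $n$. As $A_n = \bigcup_{k \in \mathbb{Z}} g_n^k[y_0, g_n y_0]$ is a genuine geodesic assembled from these uniformly contracting fundamental segments, Lemma~\ref{lemma:concatenationsarecontracting}~(i) shows every finite subsegment, hence $A_n$ itself, is contracting. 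Thus $A_n$ is a periodic rank-one geodesic and (1) holds.

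The main obstacle is this closing step in \emph{(2)$\Rightarrow$(1)}: turning the single rank-one line $c$ into a genuinely periodic one. The stability estimates are not the difficulty, as they are exactly Lemmas~\ref{lemma:closetocontracting} and \ref{lemma:concatenationsarecontracting}; the real work is the alignment/pigeonhole argument producing an element that translates $c$ \emph{along itself} rather than merely carrying one point of $c$ near another, and the verification that the near-return is good enough to bound $d(x_0, A_n)$ (equivalently, that the $g_n$-invariant broken path fellow-travels the axis $A_n$). This is precisely the contracting analogue of the classical closing lemma for rank-one geodesics; in the cocompact setting cocompactness is what makes the pigeonhole available, and it is the hypothesis that Ballmann and Buyalo encode through their duality condition \cite{BB08}.
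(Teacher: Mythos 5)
First, a point of comparison: the paper never proves this proposition at all---it is imported wholesale as \cite[Proposition~1.10]{BB08} (where the standing hypothesis is the duality condition, which cocompactness supplies)---so your proposal has to stand on its own merits. Direction (1)$\Rightarrow$(2) essentially does: the north-south-plus-lower-semicontinuity argument is the standard one, and your black box \emph{(B)} is legitimate for a \emph{periodic} rank-one geodesic, since axes of rank-one isometries in this setting are contracting and one can then run the visibility computation the paper itself uses in Corollary~\ref{therearerankoneaxes}. The genuine gap is in (2)$\Rightarrow$(1), and it occurs at the very first step: your fact \emph{(A)} produces a \emph{rank-one} geodesic $c$, and you then declare it contracting ``with some contracting constant $A$,'' appealing to the equivalence ``underlying Lemma~\ref{lemma:slimgeodesics}.'' But that lemma equates contracting with slim; rank-one (not bounding a flat half-plane) is strictly weaker than contracting for non-periodic geodesics. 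A geodesic can avoid every flat half-plane while crossing wider and wider flat regions, so $d_T(\xi,\eta)>\pi$ hands you no contraction constant whatsoever; the equivalence rank-one $\Leftrightarrow$ contracting is known precisely for \emph{axes} \cite{BF09,CS13}, i.e.\ for the periodic geodesics you are trying to manufacture, so assuming it for $c$ begs the question. Every quantitative step downstream (slimness of $c$, both applications of Lemma~\ref{lemma:closetocontracting}, the bound $d(x_0,A_n)\le R$) consumes this missing constant $A$. This is exactly why Ballmann--Buyalo's actual proof runs through flat-strip and half-plane arguments rather than contraction estimates.

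Two further steps would fail even if contraction were granted. The alignment pigeonhole---finding $g_n$ that translates $c$ \emph{along itself}---is asserted rather than proved, and you concede it is the crux: cocompactness gives you $g_n x_0$ near $c(t_n)$ for free, but uniform alignment requires recurrence in the space of parametrized geodesics modulo $G$ (Arzel\`a--Ascoli plus a diagonal choice of the form $h=g_ng_m^{-1}$), followed by a genuine argument that an isometry coarsely translating a long subsegment of a contracting geodesic along itself must be hyperbolic with axis uniformly close to $c$; note that a large displacement of the single point $x_0$ does not even rule out elliptic elements, so ``displacing $x_0$ by roughly $t_n$, is hyperbolic'' is unjustified as written. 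Finally, the closing inference misuses Lemma~\ref{lemma:concatenationsarecontracting}~(i): applied inductively to $A_n=\bigcup_{k}g_n^k[y_0,g_ny_0]$ it yields a contracting constant growing linearly in the number of fundamental segments traversed, hence no uniform constant for $A_n$ at all. To finish you would need a local-to-global theorem for contracting geodesics (not available in this paper), or---since the proposition only asks for a \emph{rank-one} periodic geodesic---a direct argument excluding a flat half-plane along $A_n$, which is again the flat-strip machinery of \cite{BB08} that the proposal set out to bypass.
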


\bigskip

\begin{corollary} \label{therearerankoneaxes} Let $X$ be a complete and proper CAT(0) space and let $G$ act on $X$ geometrically. If $X$ has a contracting ray then there is a rank-1 isometry. \end{corollary}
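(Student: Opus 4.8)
The plan is to deduce the corollary directly from the preceding Proposition of Ballmann and Buyalo by verifying its second condition. Since $G$ acts cocompactly, the orbit of any basepoint is coarsely dense, so its set of accumulation points in $\partial X$ is all of $\partial X$; that is, the limit set satisfies $\Lambda = \partial X$. It is non-empty because $X$ contains a contracting ray, so the Proposition applies and it suffices to establish condition (2): for every $\xi \in \partial X$ there is an $\eta \in \partial X$ with $d_T(\eta,\xi) > \pi$. Once condition (1) is obtained, the resulting periodic rank-one geodesic is an axis of some infinite-order $g \in G$, and this $g$ is the desired rank-1 isometry.

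First I would produce the relevant contracting bi-infinite geodesics. Let $a(t)$ be the given contracting ray with $\alpha = a(\infty)$. Given any $\xi \in \partial X$ with $\xi \neq \alpha$, the Visibility Lemma~\ref{lemma:visibility} supplies a bi-infinite geodesic $c$ from $\xi$ to $\alpha$. Its ray toward $\alpha$ is asymptotic to $a(t)$, hence within bounded distance of it, so by Lemma~\ref{lemma:closetocontracting} (applied uniformly to long initial segments) this ray is itself contracting. A bi-infinite geodesic possessing a contracting sub-ray cannot bound a flat half-plane, since such a region would produce parallel rays whose projection onto the sub-ray is unbounded, contradicting the bounded-projection Lemma~\ref{lemma:boundedprojection}; thus $c$ is contracting. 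For the remaining case $\xi = \alpha$, the same construction applied to any other geodesic yields a contracting bi-infinite geodesic through $\alpha$, providing a partner $\eta \neq \alpha$.

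The heart of the argument, and the step I expect to be the main obstacle, is converting the coarse contracting property into the metric statement $d_T(\xi,\alpha) > \pi$ for the two ends of the contracting line $c$. Because $\xi$ and $\alpha$ are the endpoints of a geodesic line, the Alexandrov angle $\angle(\xi,\alpha)$ equals $\pi$, so $d_T(\xi,\alpha) \geq \pi$ is automatic and only the strict inequality carries content. I would argue by contradiction: if $d_T(\xi,\alpha) = \pi$, then in a cocompact CAT(0) space the Tits distance being realized at the value $\pi$ forces $c$ to bound a flat half-plane, which again contradicts the bounded-projection property of the contracting geodesic $c$. Hence $d_T(\xi,\alpha) > \pi$. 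Combined with the previous paragraph this verifies condition (2) for every $\xi \in \partial X = \Lambda$, yielding condition (1) and hence a rank-1 isometry. The delicate point to pin down rigorously is precisely this equivalence between the contracting (rank-one) condition and the Tits-distance inequality, i.e. ruling out a flat half-plane whenever $d_T = \pi$; everything else is a routine application of the visibility and projection lemmas.
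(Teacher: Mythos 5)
Your outer reduction matches the paper exactly: invoke the Ballmann--Buyalo proposition with $\Lambda = \partial X$ (by cocompactness) and verify condition (2) using the contracting endpoint $a(\infty)$ as the partner $\eta$. But the step you yourself flag as the main obstacle is a genuine gap, and it does not close as stated. You claim that in a cocompact CAT(0) space, $d_T(\xi,a(\infty)) = \pi$ forces the connecting line $c$ to bound a flat half-plane. No lemma in the paper gives this, and the standard results run in the opposite direction: a flat half-plane bounded by $c$ forces $d_T(c^-,c^+) = \pi$, while extracting a half-plane \emph{from} $d_T = \pi$ requires building flat sectors and taking limits of translates under the group action, which produces a half-plane bounded by a limit of isometric images of $c$, not by $c$ itself; to run your contradiction you would additionally need that the contracting property (with controlled constant) persists under such limits. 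Worse, the natural route to your implication is the contrapositive of ``contracting $\Rightarrow d_T > \pi$'' --- which is precisely the statement you are trying to prove, so the argument is circular unless you supply an independent proof. Your middle paragraph has a related problem in miniature: deducing that $c$ is contracting from ``$c$ does not bound a flat half-plane'' uses the hard direction of the half-plane dichotomy (a Bestvina--Fujiwara-type result under cocompactness) that is not among the paper's quoted lemmas; Lemma \ref{lemma:closetocontracting} only gives you the sub-ray asymptotic to $a(t)$.

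The paper sidesteps all of this with a short, purely metric argument that you could substitute wholesale, and which never needs $c$ to be contracting or any half-plane dichotomy. If there is no path from $a(\infty)$ to $\xi$ inside the Tits boundary, then $d_T(a(\infty),\xi) = \infty > \pi$. Otherwise take a Tits geodesic $c$ from $a(\infty)$ to $\xi$ and an interior point $\xi_0$ distinct from both endpoints. Visibility (Lemma \ref{lemma:visibility}) applies to \emph{every} point of $\partial X$, so there is a geodesic line in $X$ from $\xi_0$ to $a(\infty)$, whence $\angle(a(\infty),\xi_0) = \pi$; since Tits length dominates the angle metric on each leg,
$$d_T(a(\infty),\xi) = \mathrm{length}(c) \geq \angle(a(\infty),\xi_0) + \angle(\xi_0,\xi) \geq \pi + \epsilon$$
with $\epsilon > 0$ because $\xi_0 \neq \xi$. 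This is where the contracting hypothesis actually enters --- only through visibility of $a(\infty)$ from all of $\partial X$ --- and it yields the strict inequality without ever confronting the (unproven, and delicately false-as-stated) equivalence between $d_T = \pi$ and bounding a flat half-plane.
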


\begin{proof} By the strong visibility condition in Lemma~\ref{lemma:visibility}, if $X$ has a contracting ray $a$ then it is visible from all points $\xi \in \partial X$. The geodesic between $a(\infty)$ and any $\xi_0 \in \partial X$ guaranteed by visibility tells us that the Alexandrov angle $\angle(a(\infty),\xi_0) = \pi$. To show that the Tits distance is larger than $\pi$ from any point $\xi \in \partial X$, pick a geodesic from $a(\infty)$ to $\xi$ \emph{inside} the Tits boundary $\partial X$ and call it $c$ (note: if no such geodesic exists then $d_T(a(\infty),\xi) = \infty$). Now let $\xi_0$ be a point on $c$ separate from $a(\infty)$ and $\xi$. Then we will have that $d_T(a(\infty),\xi) = length(c) \geq \angle(a(\infty),\xi_0) + \angle(\xi_0,\xi) \geq \pi + \epsilon$, note that $\epsilon > 0$ since $\xi_0 \neq \xi$. So $d_T(a(\infty),\xi) > \pi$ and we have that $X$ has a rank-one periodic geodesic.
\end{proof}

\bigskip
We need the following technical fact about geodesics in metric spaces at several points in this paper, we include a proof for the sake of completeness. 

\begin{lemma} \label{boundedtrianglemagic} Let $\gamma$ be a geodesic in a metric space $X$ and let $x$ be a point in $X$ such that $d(x,\gamma(0)) = t_0$, then if the distance $d(x,\gamma) \leq D$ then $d(x,\gamma(t_0))\leq 2D$. 

\end{lemma}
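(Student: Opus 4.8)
The plan is to reduce everything to triangle inequalities, exploiting that $\gamma$ is an isometric embedding, so that $d(\gamma(s),\gamma(s')) = |s-s'|$ for all parameters $s,s'$. The geometric content is that $\gamma(t_0)$ is the point of $\gamma$ lying at arc-length $t_0$ from $\gamma(0)$, which is exactly $d(x,\gamma(0))$; so $\gamma(t_0)$ is the natural ``candidate nearest point'' to $x$, and the claim asserts that knowing $x$ comes within $D$ of \emph{some} point $\gamma(t)$ already forces $x$ within $2D$ of this particular point.

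First I would pin down the parameter $t$. Since $\gamma$ is a geodesic, $d(\gamma(0),\gamma(t)) = t$ (taking $t \geq 0$, i.e.\ working on the relevant ray from $\gamma(0)$). Two applications of the triangle inequality then trap $t$ near $t_0$:
\[
t = d(\gamma(0),\gamma(t)) \leq d(\gamma(0),x) + d(x,\gamma(t)) \leq t_0 + D,
\]
\[
t_0 = d(\gamma(0),x) \leq d(\gamma(0),\gamma(t)) + d(\gamma(t),x) \leq t + D,
\]
so that $|t - t_0| \leq D$. Because $\gamma$ is an isometric embedding this immediately gives $d(\gamma(t),\gamma(t_0)) = |t - t_0| \leq D$. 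I would then close with one final triangle inequality,
\[
d(x,\gamma(t_0)) \leq d(x,\gamma(t)) + d(\gamma(t),\gamma(t_0)) \leq D + D = 2D.
\]

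The argument has no deep obstacle; the only point requiring care is the implicit convention that $t$ and $t_0$ lie on the same side of $\gamma(0)$, so that $d(\gamma(0),\gamma(t)) = t$ rather than $|t|$ and the first displayed bound reads as written. This is automatic when $\gamma$ is a ray emanating from $\gamma(0)$, which is how the lemma will be applied. For a genuinely bi-infinite geodesic one must restrict $t$ to $[0,\infty)$: otherwise the conclusion can fail, as one sees already in $\mathbb{R}$ by taking $\gamma(s)=s$ and $x=-t_0$, where $d(x,\gamma(-t_0))=0\le D$ but $d(x,\gamma(t_0))=2t_0$ can far exceed $2D$. Thus the substantive step is simply to fix the orientation of $\gamma$ correctly; everything else is the two-sided estimate $|t-t_0|\le D$ followed by a triangle inequality.
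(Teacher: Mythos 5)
Your proof is correct and takes essentially the same route as the paper's: both arguments reduce to the two-sided triangle-inequality bound $|t-t_0|\le D$ and then finish with one more triangle inequality, the paper merely splitting into the cases $\ell\ge t_0$ and $\ell< t_0$ where you argue symmetrically in one step. Your observation that $t$ must be restricted to $[0,\infty)$ (with the counterexample in $\mathbb{R}$) is a minor but genuine sharpening --- the paper's proof makes the same implicit assumption when it writes $d(\gamma(0),\gamma(\ell)) = t_0 - a$ in its second case, and the lemma is indeed only applied to rays.
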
 

\begin{proof} Since $d(x,\gamma) \leq D$ let's let $\ell$ be a point such that $d(x,\gamma(\ell)) \leq D$. There are two cases, $\ell \geq t_0$ or $\ell < t_0$.

\bigskip
In the first case if we consider the geodesic triangle defined by $\gamma(0), \gamma(\ell)$, and $x$, but let's rewrite $\ell = t_0+a$. The triangle inequality says that $t_0+a \leq D + t_0$ i.e. $a \leq D$. Then considering the triangle defined by the three points $\gamma(t_0+a),\gamma(t_0)$, and $x$ we get a new triangle inequality $d(x,\gamma(t_0)) \leq a + D \leq 2D$.

\bigskip
In the second case we will again consider the geodesic triangle given by $\gamma(0), \gamma(\ell)$, and $x$ but this time and we will write $\ell = t_0-a$. The triangle inequality fashions us with $t_0  \leq D + (t_0-a)$ or $a \leq D$. Considering the triangle defined by $\gamma(t_0-a), \gamma(t_0)$, and $x$ we get the triangle inequality $d(x,\gamma(t_0)) \leq D + a \leq 2D$.

\end{proof}

\section{The topology of the contracting boundary}
\label{sec:topology}

The topology of the contracting boundary is very different from any of the typical topologies put on the visual boundary. Later, in Section~\ref{sec:metrizability}, we will show that the contracting boundary is not always a metric space. In fact, we show it to not even be first-countable. In anticipation of that we will prove some elementary topological facts about the contracting boundary (and direct limit spaces in general) to facilitate some of the later proofs.

First, let's define the contracting boundary and then we will talk about some of its basic topological properties.

\begin{definition} Let $X$ be a CAT(0) space. Let $\partial_c^DX_x$ be the set of infinite geodesic rays that start at $x$ and are $D$-contracting, we shall call this the \emph{D-component} of the contracting boundary. This is a subspace of the visual boundary of $X$, $\partial X_x$, and has the associated topology on it. If $D_0 \leq D_1$ then there is the natural continuous inclusion $\partial_c^{D_0}X_x \hookrightarrow \partial_c^{D_1}X_x$,  so taking all non-negative $D$ we get a directed system. 

The \emph{contracting boundary}, denoted $\partial_cX_x$, is the union of all of the D-components with the direct limit topology. 

\end{definition}

The homeomorphism type (but not the contracting constants) of the contracting boundary is independent of the base point $x$, and so typically this will be suppressed when there is no danger of confusion \cite{CS13}. 

One of the basic properties of a direct limit space is that a set in the space is open (respectively closed) if and only if its intersection with each component is open (closed). In fact, this is often taken as the definition. 

Because the topology of the contracting boundary is so dependent on the topology of the components it will be useful to know how the subspace topology on the components sits inside of the visual topology. The following is Lemma~3.3 in \cite{CS13}.

\begin{lemma} \label{lem:componentsareclosed} For all $D \geq 0$ the $D$-components of the contracting boundary are closed subsets of the visual boundary. 

\end{lemma}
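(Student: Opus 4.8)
The plan is to show that $\partial_c^D X_x$ is closed in the visual boundary $\partial X_x$ by working entirely with the visual topology, in which a sequence of rays converges precisely when the corresponding geodesic segments converge uniformly on compact sets (equivalently, by Arzel\`a--Ascoli, when a subsequence does). So suppose $b_n(\infty) \in \partial_c^D X_x$ is a sequence of $D$-contracting rays based at $x$ which converges in the visual topology to some $b(\infty) \in \partial X_x$, with $b(t)$ the geodesic ray representing the limit based at $x$. I want to prove that the limit ray $b(t)$ is itself $D$-contracting, which shows the $D$-component contains all its limit points and is therefore closed.

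First I would set up the convergence concretely: since $X$ is proper, visual convergence $b_n(\infty) \to b(\infty)$ means the rays converge uniformly on compact sets, i.e. for each fixed $T$ we have $b_n(t) \to b(t)$ uniformly for $t \in [0,T]$. The natural strategy is then to verify the $D$-contracting inequality directly for the limit ray. Fix $x', y' \in X$ with $d(x', y') < d(x', \pi_{b(t)}(x'))$; I must show $d(\pi_{b(t)}(x'), \pi_{b(t)}(y')) < D$. The key is that the nearest-point projection $\pi$ onto a convex set (a geodesic ray) in a CAT(0) space depends continuously on the geodesic when geodesics converge uniformly on compacta, because projection is $1$-Lipschitz and CAT(0) geodesics vary continuously with their endpoints/rays. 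So I would argue that $\pi_{b_n(t)}(x') \to \pi_{b(t)}(x')$ and likewise for $y'$, and that the hypothesis $d(x',y') < d(x', \pi_{b(t)}(x'))$ passes, for all large $n$, to the strict inequality $d(x', y') < d(x', \pi_{b_n(t)}(x'))$ (using continuity of distances and strictness of the inequality). Then the $D$-contracting property of each $b_n(t)$ yields $d(\pi_{b_n(t)}(x'), \pi_{b_n(t)}(y')) < D$, and passing to the limit gives $d(\pi_{b(t)}(x'), \pi_{b(t)}(y')) \le D$.

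The one subtlety to watch is that passing to the limit in a strict inequality only yields a non-strict inequality, so naively I would only conclude $\le D$ rather than $< D$. To recover strictness I would either observe that the definition is robust enough that $\le D$ on a closed condition still places the ray in the $D$-component after a harmless adjustment, or — cleaner — exploit that the contracting constant is an infimal/attained quantity and argue that if the limit ray failed to be $D$-contracting it would violate the bound for some specific pair witnessing a value strictly exceeding $D$, and continuity would then push that violation back to $b_n$ for large $n$, a contradiction. Concretely, I would prove the contrapositive: if $b(t)$ is \emph{not} $D$-contracting, there exist $x', y'$ with $d(x',y') < d(x', \pi_{b(t)}(x'))$ but $d(\pi_{b(t)}(x'), \pi_{b(t)}(y')) \ge D$; continuity of projection then forces, for large $n$, both $d(x',y') < d(x', \pi_{b_n(t)}(x'))$ and $d(\pi_{b_n(t)}(x'), \pi_{b_n(t)}(y'))$ arbitrarily close to a value $\ge D$, contradicting that $b_n$ is $D$-contracting.

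The main obstacle is rigorously establishing the continuity of nearest-point projection onto the limiting ray, together with the fact that the limit of rays based at a common point $x$ is again a ray based at $x$ to which projection is well-defined — this rests on the uniqueness of geodesics in proper CAT(0) spaces and the $1$-Lipschitz, convexity-based behavior of projection, facts available from the CAT(0) structure (and in the spirit of Lemma~\ref{lemma:closetocontracting}). Once that continuity is in hand the inequality bookkeeping is routine. I would therefore spend the bulk of the argument carefully justifying that $\pi_{b_n(t)}(p) \to \pi_{b(t)}(p)$ for each fixed $p$, and handle the strict-versus-non-strict inequality gap via the contrapositive formulation above.
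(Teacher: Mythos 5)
Your strategy is the right one, and it is in fact essentially the paper's proof: the paper does not prove this lemma itself but cites it as Lemma~3.3 of Charney--Sultan, whose argument is exactly your direct limit argument. Your core machinery is sound: for rays based at a common point $x$ in a proper CAT(0) space, visual convergence is uniform convergence on compact sets (Arzel\`a--Ascoli); the projections $\pi_{b_n}(p)$ of a fixed point $p$ lie in the compact segments $b_n([0,2d(x,p)])$ (since $d(p,\pi_{b_n}(p)) \leq d(p,b_n(0)) = d(p,x)$); and any subsequential limit of $\pi_{b_n}(p)$ is a nearest point of $p$ on the limit ray $b$, hence equals $\pi_b(p)$ by uniqueness of nearest-point projection onto closed convex subsets of a complete CAT(0) space. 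With that continuity in hand, the strict hypothesis $d(x',y') < d(x',\pi_b(x'))$ does pass to $b_n$ for large $n$, and the contracting bound passes to the limit as you say.

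The one genuine flaw is your patch for the strict-versus-non-strict issue. Under this paper's definition of $D$-contracting (strict inequality $d(\pi(x'),\pi(y')) < D$ in the conclusion), your contrapositive does not close the borderline case: if the limit ray $b$ fails strict $D$-contraction only because some admissible pair attains $d(\pi_b(x'),\pi_b(y')) = D$ exactly, then continuity gives $d(\pi_{b_n}(x'),\pi_{b_n}(y')) \to D$, which is perfectly consistent with each $b_n$ satisfying the strict bound $< D$ --- a quantity ``arbitrarily close to a value $\geq D$'' approached from below yields no contradiction. So your argument genuinely proves only that the closure of $\partial_c^D X_x$ lands in the set of rays satisfying the closed condition $d(\pi_b(x'),\pi_b(y')) \leq D$. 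The correct resolution is definitional rather than a cleverer perturbation: Charney--Sultan define $D$-contracting with $\leq D$, which is the convention in force for the cited Lemma~3.3, and under that convention your first, direct limiting argument is already complete and the contrapositive is unnecessary. The discrepancy is harmless for everything downstream in the paper, since the strict and non-strict filtrations are cofinal in one another (a ray satisfying the $\leq D$ bound is strictly $D'$-contracting for every $D' > D$), so they define the same direct limit topology on $\partial_c X$, and the uses of this lemma (e.g.\ in Lemma~\ref{compactsets}) need only closedness of a cofinal family of components.
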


Understanding compact sets in the contracting boundary will be important later in our investigation. It turns out that compact sets in the contracting boundary are closely related to the compact sets of the visual boundary, but are limited by their contracting constants. 

\begin{lemma} \label{compactsets} A set $K$ is compact in $\partial_cX$ if and only if $K = C \cap \partial_c^DX$ for some compact set $C \subset \partial X$ and some $D$.

\end{lemma}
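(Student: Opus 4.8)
The plan is to prove both implications by exploiting the fact that, although $\partial_c X$ carries the direct limit topology, each $D$-component sits inside it as a genuine closed subspace whose topology is exactly the visual subspace topology. So the first thing I would establish is a structural lemma: for every $D$, the set $\partial_c^D X$ is closed in $\partial_c X$, and the subspace topology it inherits from $\partial_c X$ coincides with its visual topology. Closedness is immediate from the direct limit criterion (a set is closed iff its intersection with each component is closed) together with lemma \ref{lem:componentsareclosed}: for $D' \geq D$ the intersection $\partial_c^D X \cap \partial_c^{D'}X = \partial_c^D X$ is visually closed, and for $D' < D$ it is all of $\partial_c^{D'}X$. The agreement of topologies then follows by checking that a visually open subset $V$ of $\partial_c^D X$ has visually closed complement in each component, so that $\partial_c^D X \setminus V$ is closed in $\partial_c X$ and $V$ is the restriction of a $\partial_c X$-open set; the reverse containment is just continuity of the canonical inclusion $\iota_D \colon \partial_c^D X \hookrightarrow \partial_c X$.

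Granting this, the ``if'' direction is short. Given a compact $C \subset \partial X$ and a constant $D$, the set $C \cap \partial_c^D X$ is a closed subset of the compact set $C$ (again by lemma \ref{lem:componentsareclosed}), hence compact in $\partial X$, hence compact in $\partial_c^D X$ with its visual topology. Applying the continuous inclusion $\iota_D$ sends it to a compact subset of $\partial_c X$, which is exactly $C \cap \partial_c^D X$.

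The ``only if'' direction is the heart of the matter, and its crux is showing that any compact $K \subseteq \partial_c X$ is contained in a single component $\partial_c^D X$. I would argue by contradiction using the cofinal sequence of integer components $\partial_c^0 X \subseteq \partial_c^1 X \subseteq \cdots$, which computes the same direct limit. If $K$ escaped every $\partial_c^n X$, I would choose $x_n \in K \setminus \partial_c^n X$; nestedness forces the set $A = \{x_n\}$ to meet every component $\partial_c^m X$ in a finite set, since only finitely many indices $n < m$ can contribute. This lets me conclude both that $A$ is infinite and that every subset of $A$ has finite, hence (by metrizability of the components) visually closed, intersection with each component. By the direct limit criterion every subset of $A$ is then closed in $\partial_c X$, so $A$ is an infinite closed discrete subset of the compact set $K$, which is impossible.

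This contradiction step is the one I expect to be the main obstacle, since it is where the interplay between the direct limit topology, the closedness of the components, and their Hausdorffness must all be invoked simultaneously and carefully. Once $K \subseteq \partial_c^D X$ is known, the structural lemma makes $K$ compact in $\partial_c^D X$ and therefore compact as a subset of $\partial X$; taking $C = K$ gives $K = C \cap \partial_c^D X$ and completes the proof.
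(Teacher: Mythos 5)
Your proposal is correct and follows essentially the same route as the paper: the ``if'' direction via closedness of the components and the topological embedding $\partial_c^D X \hookrightarrow \partial_c X$, and the ``only if'' direction by extracting a sequence escaping every component and using the key fact that a set meeting each $\partial_c^D X$ in a finite set is closed in the direct limit. The only cosmetic differences are that you phrase the compactness contradiction via an infinite closed discrete subset where the paper exhibits an explicit open cover $\{\partial_c X \setminus A_n\}$ with no finite subcover, and that you prove the embedding fact the paper simply asserts from the definition of the direct limit topology.
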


\begin{proof} $\Leftarrow$ If $K = C\cap \partial_c^DX$ then because $C$ is compact in $\partial X$ and $\partial_c^DX$ is a closed set in $\partial X$ by Lemma~\ref{lem:componentsareclosed}, then $K$ is a closed subset in $C$ and therefore compact in $\partial X$. Now the topology on $\partial_c X$ is defined in such a way so that each of the components $\partial_c^DX$ are topologically embedded into $\partial_cX$, i.e. compact subsets of $\partial_c^DX$ will also be compact in $\partial_cX$. So $K$ is a compact set in $\partial_cX$.

\bigskip
$\Rightarrow$ Assume that $K$ is a set in $\partial_cX$ but that $K$ is not contained in $\partial_c^DX$ for any $D$. These assumptions guarantee that there is some sequence of geodesics $\{a_i\}$ in $K$ where $a_i$ is $D_i$-contracting and $D_i \rightarrow \infty$. By possibly passing to a subsequence we may assume that $D_i > D_{i-1}$ and that each $a_i$ is not $D_{i-1}$-contracting. Let $\displaystyle A_n = \{a_i\}_{i \geq n+1}$. Note that for all $n$ and all $D$, $A_n\cap \partial_c^DX$ is a finite set and therefore closed in each component, so $A_n$ is closed in $\partial_cX$. 

The collection $\mathcal{O} = \{\partial_cX \backslash A_n\}$ is an open cover of $K$, but each open set only contains finitely many of the $a_i$. Take any finite subcollection of $\mathcal{O}$, it will only cover finitely many of the $a_i$ and so it is not a cover, therefore $K$ is not compact. We can then conclude that if $K$ is compact, it is contained in one of the components $\partial_c^DX$ for some $D$. Because the topology on $\partial_cX$ is finer than that of $\partial X$ any set which is compact in the contracting boundary is compact in the visual boundary. In other words every compact set $K$ in the contracting boundary is of the form $K = K \cap \partial_c^DX$ for some $D$. 

\end{proof}

\bigskip

We will also want to know when sequences in the contracting boundary converge. It turns out that a sequence converges in the contracting boundary if and only if it converges in the visual boundary and its contracting constants are uniformly bounded above.

\begin{lemma} \label{convergenceincontractingboundary} Let $X$ be a proper CAT(0) metric space. A sequence $a_i$ in $\partial_cX_x$ converges to a point $b \in \partial_cX_x$ if and only if the following two conditions hold:

\begin{conditions}

\item There is a uniform $K$ such that for all $i$ $a_i$ is $K$-contracting.

\item In the visual boundary $a_i \to b$.

\end{conditions}

\end{lemma}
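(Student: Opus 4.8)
The plan is to prove both directions by leaning on Lemma~\ref{compactsets}, which already characterizes compact sets in $\partial_cX$, together with the definition of the direct limit topology. The key conceptual point is that convergence of a sequence is equivalent to the statement that the sequence, together with its limit, forms a compact set, so the two conditions in the statement are exactly what is needed to place the sequence inside a single component $\partial_c^DX$ while simultaneously controlling its behavior in the visual boundary.

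For the forward direction, suppose $a_i(t) \to b(t)$ in $\partial_cX_x$. First I would observe that the set $S = \{a_i(t)\}_i \cup \{b(t)\}$ is compact in $\partial_cX$: any open cover of $S$ contains an open set $U$ about $b(t)$, and since $a_i(t) \to b(t)$ all but finitely many $a_i(t)$ lie in $U$, so finitely many additional open sets cover the remaining terms. By Lemma~\ref{compactsets} this compact set satisfies $S = C \cap \partial_c^DX$ for some compact $C \subset \partial X$ and some $D$; in particular every $a_i(t)$ is $D$-contracting, which gives condition (1) with $K = D$. For condition (2), I would use that the direct limit topology on $\partial_cX$ is \emph{finer} than the visual topology (the inclusion $\partial_cX \to \partial X$ is continuous, since each component $\partial_c^DX$ embeds continuously into $\partial X$ by Lemma~\ref{lem:componentsareclosed}), so convergence in $\partial_cX$ forces convergence in $\partial X$, giving $a_i(t) \to b(t)$ visually.

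For the reverse direction, assume both conditions hold. By condition (1) every $a_i(t)$ lies in the single component $\partial_c^KX$. The subtlety is that the limit $b(t)$ need not a priori be $K$-contracting. Here I would argue that since $\partial_c^KX$ is a \emph{closed} subset of the visual boundary by Lemma~\ref{lem:componentsareclosed}, and since $a_i(t) \to b(t)$ visually with each $a_i(t) \in \partial_c^KX$, the limit $b(t)$ must also lie in $\partial_c^KX$; thus $b(t)$ is itself $K$-contracting and the entire sequence together with its limit sits inside one component. Within $\partial_c^KX$ the subspace topology agrees with the visual topology, so visual convergence $a_i(t) \to b(t)$ is the same as convergence inside $\partial_c^KX$. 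Finally, because $\partial_c^KX$ is topologically embedded in the direct limit $\partial_cX$, convergence inside the component promotes to convergence in $\partial_cX$.

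The main obstacle I anticipate is the direction requiring care about where the limit lives: one must rule out the possibility that $b(t)$ escapes the component containing the sequence, and this is precisely resolved by the closedness of the components established in Lemma~\ref{lem:componentsareclosed}. A secondary point to handle cleanly is the comparison of topologies --- that the direct limit topology refines the visual one --- which follows from the fact that each component is topologically embedded, but it is worth stating explicitly since it drives the forward implication.
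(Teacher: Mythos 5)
Your proof is correct, but it routes the forward direction through different machinery than the paper. The paper proves condition (1) directly by contraposition: if the contracting constants were unbounded, one extracts a subsequence $a_{i_k}(t)$ with $a_{i_k}(t)$ not $k$-contracting; this subsequence meets each component $\partial_c^DX_x$ in a finite, hence closed, set, so it is closed in the direct limit, and its complement is then an open neighborhood of $b(t)$ missing infinitely many terms of the sequence, contradicting convergence. You instead observe that a convergent sequence together with its limit is compact and invoke Lemma~\ref{compactsets} to trap the whole sequence in a single component $\partial_c^DX_x$. This is a legitimate shortcut --- there is no circularity, since Lemma~\ref{compactsets} precedes this lemma and is proved independently --- but note that the mechanism hidden inside the proof of Lemma~\ref{compactsets} is exactly the paper's closed-subsequence argument, so the two proofs agree at bottom and differ only in packaging; yours buys brevity at the cost of invoking a stronger statement than needed. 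In the reverse direction you also deviate slightly: the paper simply sets $M = \max(K,B)$, where $B$ is the contracting constant of $b(t)$ (available by hypothesis, since $b(t) \in \partial_cX_x$), and works inside $\partial_c^MX_x$, whereas you use the closedness of $\partial_c^KX_x$ in $\partial X_x$ (Lemma~\ref{lem:componentsareclosed}) to conclude that $b(t)$ is itself $K$-contracting. Your step is valid and even yields slightly more, but it is unnecessary given the hypothesis. One small imprecision worth flagging: you cite Lemma~\ref{lem:componentsareclosed} for continuity of the inclusion $\partial_cX \to \partial X$; that lemma gives closedness of the components, while the continuity you actually need follows from the definition of the subspace topology on each $\partial_c^DX_x$ together with the universal property of the direct limit.
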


\begin{proof} $\Leftarrow$ Since $a_i$ are all $K$-contracting then the set $\{a_i,b\} \subseteq \partial_c^MX_x$ where $M$ is the max of the contracting constant of $b$ and $K$. The topology on this component is just the subspace topology and thus since the $a_i \to b$ in the visual boundary the convergence happens in this component as well. Because each of these components are topologically embedded the convergence takes place in $\partial_cX_x$ as well.

$\Rightarrow$ Note that the topology on $\partial_cX_x$ is finer than that of the subspace topology. In particular, if condition (2) fails then $a_i$ will not converge to $b$ in the contracting boundary. 

Assume (1) fails, this means that for each $k \in \mathbb{N}$ there is an $i_k$ such that $a_{i_k}$ is not $k$ contracting. Consider the set $\{a_{i_k}\}$, this subsequence is in fact closed in $\partial_cX_x$ because only finitely many of them are in each $\partial_c^DX_x$ and are thus closed in the subspace topology. Thus $a_i \not\rightarrow b$ in $\partial_cX_x$.

\end{proof}

For a point in the contracting boundary, $\alpha \in \partial_cX$, and a $C$-contracting geodesic ray $\gamma$, whose forward endpoint is different from $\alpha$, we can define the projection of $\alpha$ onto $\gamma$, $\pi_\gamma(\alpha)$. If we take a representative of $\alpha$, say a geodesic $a$, the projection of $a$ onto $\gamma$ is a set of finite diameter by Lemma~\ref{lemma:boundedprojection}, there is then some unbounded sequence of $t_i$ such that $\pi_\gamma(a(t_i))$ will converge to some point $\gamma(T)$. Note that the point $\gamma(T)$ depends, not only on the chosen representative of $\alpha$, but also on the sequence of $t_i$. For a given representative $a$ we have that $a$ is eventually contained in the compliment of any bounded neighborhood of $\gamma$. Applying Lemma~\ref{thinrectangles}, given a large enough $t_0$ the geodesic $[a(t_0),a(\infty))$ is not contained in the $M_C$ neighborhood of $\gamma$ and so for all $t,t' > t_0$, $d(\pi_\gamma(a(t)),\pi_\gamma(a(t')))< M_C$. Thus for any two sequences $a(t_i)$ and $a(t_i')$, since the projections of these two sequences are eventually within $M_C$ of each other the limits are as well. If another representative of $\alpha$ is chosen, say $a'$, it is within a bounded distance of $a$, let us call that distance $D$. By picking $t_0$ large enough the geodesics $[a(t_0),a(\infty))$ and $[a'(t_0),a'(\infty))$ are outside of the $ D+M_C$ neighborhood of $\gamma$. Let $t > t_0$, there is a $t_1$ such that $d(a'(t),a(t_1)) \leq D$, so applying Lemma~\ref{thinrectangles} the projection of $a'(t)$ and $a(t_1)$ are within $M_C$ of each other. Thus the projections of $[a'(t_0),a'(\infty))$ and $ [a(t_0),a(\infty))$ onto $\gamma$ are within $2M_C$ of each other. 

\begin{remark} \label{barycenter} A closed and bounded set in a proper CAT(0) metric space has a
unique center. That is, a point which is the center of the smallest circle which
contains the entire set exists and is unique. \cite[II.2.7]{BH}. In this paper that point will
be referred to as the \emph{barycenter}.
\end{remark}

\begin{definition} Given $\alpha \in \partial_cX$, and a $C$-contracting ray $\gamma$, by the previous discussion the set  $E = \left\{ \; \displaystyle \lim _{i\to \infty } \pi_\gamma a(t_i) \; | \; a \in \alpha \mbox{ and the limit exists}  \right \} $  will be non-empty and have diameter at most $2M_C$. Let $\pi_\gamma(\alpha)$ be the barycenter of $E$ (or the closure of $E$ if necessary).
\end{definition}

\begin{remark} For any $T$ the three points, $\gamma(T), \alpha, \pi_\gamma(\alpha)$ form an infinite $\delta$-slim triangle where $\delta$ depends only on $C$.

\end{remark}

We will topologize the set $\overline{X}_c = X \cup \partial_cX$. Recall that $X$ can be re-defined as the set of so called ``generalized'' rays in $X$. A \emph{generalized} ray is a map $a: [0,\infty) \to X$ such that an initial component $a|_{[0,t)}$ is an isometric embedding and the map $a|_{[t,\infty)}$ is constant (by setting $t = \infty$ we get back our infinite rays). Fixing a base point, $x_0$, such that all $a(0) = x_0$ each point $x \in X$ is represented by the unique generalized ray, $a_x$, whose initial component is the geodesic from $x_0$ to $x$, and is the constant function $a_x(k) = x$ for $k \geq d(x_0,x)$ otherwise. Let us denote such a representation of $X$ by $X_{x_0}$. This defines a topological space $\overline{X} = X \cup \partial X$ which is endowed with the cone topology.

\begin{definition}[Topology of $\overline{X}_c$] Let $x_0$ be a base point in $X$. Define the following set of generalized rays:

$$ \overline{X}^{D}_{x_0}= \{ c \in \overline{X} \; | \; c(0) = x_0 \; \mbox{ and } c \mbox{ is at most } D \mbox{-contracting}\}  $$

Endowing these sets with the subspace topology from $\overline{X}$ the inclusions will form a directed system. This gives us our topology on $\overline{X}_c$ as the direct limit.
 
 $$  \overline{X}_{c,x_0} := \lim_{\longrightarrow} \overline{X}^{D}_{x_0}$$ 

\end{definition}

\begin{remark} For all $D<D'$ we have the following commutative diagrams, where all inclusions are topological embeddings:

\begin{centering}

\begin{tikzcd}
	\partial_c^DX_{x_0} \arrow[hookrightarrow]{r} \arrow[hookrightarrow]{d}
		&  \partial_c^{D'}X_{x_0} \arrow[hookrightarrow]{d}\\
		\overline{X}^{D}_{x_0} \arrow[hookrightarrow]{r}
		& 	\overline{X}_{x_0}^{D'}
\end{tikzcd}

\end{centering}

By the universal property of the direct limit topology this implies that there is a continuous injection $\partial_cX_{x_0} \hookrightarrow \overline{X}_{c,x_0}$. Because all of the maps in the diagram are topological embeddings it is immediate that this map is also a topological embedding.

$X$ is also topologically embedded in $\overline{X}_{c,x_0}$. This is because, for each $x \in X$, every open ball is eventually contained in $\overline{X}_c^D$ for large enough $D$. This is just a consequence of Lemma~\ref{lemma:closetocontracting}. Further more $X$ is an open set in $\overline{X}_c$ and so $\partial_cX$ is a closed set.

\end{remark}

\begin{lemma}\label{sequenceconvergence} Consider a sequence of points $x_i \in X$. Fixing a base point $x$ the sequence $x_i$ converges to some $\alpha \in \partial_cX$ in the topology on $\overline{X}_c$ if and only if the geodesics $[x,x_i]$ converge to $\alpha$ in the cone topology on $\overline{X}$ and the contracting constants of the $[x,x_i]$ are uniformly bounded.

\end{lemma}

\begin{proof} The proof of this is the same as the proof of Lemma~\ref{convergenceincontractingboundary}.

\end{proof}

\section{The topological dynamics of the action on the boundary}
\label{sec:dynamics}

The topological dynamics of a group action can be a powerful tool in understanding the global topology. In order to better gain an understanding of the contracting boundary of cocompact CAT(0) spaces we will attempt to exploit some well known results from $\delta$-hyperbolic spaces. 

Most of the following results are well known dynamical results for the visual boundary of a CAT(0) group which contains a rank-1 isometry and a result of Ballmann and Buyalo's \cite{BB08} guarantees this is the case for the cocompact groups we are considering here. However, the definition of the topology as a direct limit of spaces gives the contracting boundary a much finer topology than the subspace topology would. Because of this different topology we are considering, it is necessary to reprove (and in some cases reword) these dynamics results as none of them will follow as immediate corollaries from the known theorems.

For non-elementary hyperbolic groups the orbit of every point in the boundary is dense. This establishes a strong dichotomy: either the group is virtually $\mathbb{Z}$ or its boundary has no isolated points.

Our first theorem is establishing this result in the case of the contracting boundary, i.e. the contracting boundary either has no isolated points and has a countable dense subset, or the group is virtually cyclic.

\begin{theorem} \label{thm:orbitsaredense} Let $X$ be a proper CAT(0) space such that $G$ acts geometrically on $X$. If $\partial_cX \neq \emptyset$ and $G$ is not virtually cyclic then the orbit of each point in $\partial_cX$ is dense. 

\end{theorem}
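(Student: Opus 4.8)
The plan is to follow the classical strategy for proving minimality of a non-elementary hyperbolic group acting on its Gromov boundary, but carried out entirely inside the fine direct-limit topology. Throughout, the recurring technical theme will be that convergence in $\partial_c X$ is governed by the two conditions of Lemma \ref{convergenceincontractingboundary}: visual convergence together with a uniform bound on the contracting constants. By Corollary \ref{therearerankoneaxes} the space $X$ admits a rank-one isometry $g \in G$; fix its (contracting) axis $\gamma$, passing through the basepoint $x$ after translation, with endpoints $g^{+}, g^{-} \in \partial_c X$. I would reduce the theorem to two assertions: first, a north--south statement for any rank-one $f\in G$, namely that for every $\xi \neq f^{-}$ one has $f^n \xi \to f^{+}$ in $\partial_c X$; and second, that the $G$-orbit of the attracting fixed point $f^{+}$ of any rank-one $f$ is dense. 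Granting these, an arbitrary $\xi \in \partial_c X$ is pushed by $g$ (or, if $\xi = g^{-}$, by a conjugate $f = hgh^{-1}$ whose fixed points avoid $g^{-}$) to an attracting fixed point lying in $\overline{G\xi}$, whence $\overline{G\xi} \supseteq \overline{Gf^{+}} = \partial_c X$.

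For the north--south step, the visual north--south dynamics of a rank-one isometry is standard; the new content is the uniform control of contracting constants. Here I would observe that the rays $[x, f^n\xi)$ fellow-travel the contracting axis of $f$ for longer and longer initial segments, so that each decomposes (up to slimness) as a concatenation of a uniformly contracting piece along the axis with a controlled remainder; Lemma \ref{lem:subsegments} and Lemma \ref{lemma:concatenationsarecontracting} then pin all these rays into a single component $\partial_c^D X$, verifying condition (1). The hypothesis that $G$ is not virtually cyclic enters next: if the $G$-orbit of the pair $\{g^{+}, g^{-}\}$ were finite, a finite-index subgroup would fix it and $G$ would be virtually cyclic, so in fact there are infinitely many conjugate axes and we may always choose a rank-one $f$ whose fixed-point pair avoids any prescribed finite set. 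Combining this with north--south dynamics gives $g^{-}\in\overline{Gg^{+}}$ (push some $hg^{+}\neq g^{\pm}$ by $g^{-n}$), so the orbit closures of $g^{+}$ and $g^{-}$ coincide.

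The heart of the argument is the density of $\overline{Gf^{+}}$. Since the action is geometric, the limit set is all of $\partial X$, so any target $\eta \in \partial_c X$ is approximated visually by orbit points; concretely, choosing $w_n \in G$ with $w_n x$ within a fundamental-domain diameter $C$ of the point $\eta(n)$ on the contracting ray $[x,\eta)$, one gets $w_n x \to \eta$. I would then consider the translated axes $w_n\gamma$ through $w_n x$ and the endpoints $w_n f^{\pm}$, selecting at each stage the endpoint $\zeta_n$ whose ray from $x$ heads toward $\eta$. The crucial point is that $[x, \zeta_n)$ is built from two uniformly contracting pieces: the segment $[x, w_n x]$, which fellow-travels the $E$-contracting segment $[x, \eta(n)]$ and is therefore $\Phi_{\ref{lemma:closetocontracting}}(E,0,C)$-contracting by Lemma \ref{lemma:closetocontracting}, and the sub-ray $[w_n x, \zeta_n)$ of the $D$-contracting axis $w_n\gamma$; Lemma \ref{lemma:concatenationsarecontracting} then makes $[x, \zeta_n)$ uniformly contracting, so condition (1) holds, while the fellow-traveling forces $\zeta_n \to \eta$ visually. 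Thus $\eta \in \overline{Gf^{+}} \cup \overline{Gf^{-}}$, and by the previous paragraph both equal $\overline{Gf^{+}}$, giving $\eta \in \overline{Gf^{+}}$ for every $\eta$ and hence density.

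I expect the main obstacle to be exactly this last uniform-contracting-constant bookkeeping: in the classical visual setting density would follow immediately from $w_n x \to \eta$, but in the direct-limit topology one must additionally trap the entire approximating sequence of translated fixed points inside one component $\partial_c^D X$. The delicate choice is of the elements $w_n$ (so that the translated axis passes close to a point on the genuinely contracting ray $[x,\eta)$ rather than an arbitrary orbit point), which is what keeps the two concatenated pieces uniformly contracting and lets Lemmas \ref{lemma:closetocontracting} and \ref{lemma:concatenationsarecontracting} apply with constants independent of $n$.
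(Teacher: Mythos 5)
Your proposal is correct in substance, and its analytic core coincides with the paper's: the decisive computation --- place an orbit point $w_nx$ within the cocompactness constant $C$ of $\eta(n)$ on the target ray, decompose $[x,\zeta_n)$ at $w_nx$ into a piece controlled by Lemma \ref{lemma:closetocontracting} and a sub-ray of a translated contracting geodesic, then feed Lemma \ref{lemma:concatenationsarecontracting} into the two-part convergence criterion of Lemma \ref{convergenceincontractingboundary} --- is exactly the engine of the paper's Proposition \ref{prop:orbitsaredense}, including the need to choose between the two endpoints of the translated bi-infinite geodesic according to which side of $w_nx$ the projection of $x$ lands (in the paper this is the sign of $t_i$ together with a passage to a subsequence; your ``select the endpoint whose ray heads toward $\eta$'' should be made precise in exactly that way, since it is also what delivers the visual convergence you assert via fellow-traveling and Lemma \ref{boundedtrianglemagic}). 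Where you genuinely diverge is the surrounding reduction. The paper joins $\alpha^+$ to a point $\alpha^-=h\alpha^+$ of its \emph{own} orbit using visibility (Lemma \ref{lemma:visibility}), so whichever endpoint wins, the accumulating sequence already lies in $G\alpha^+$; this proves the dichotomy ``globally fixed or dense'' with no rank-one element at all, and the fixed case is then killed by Lemma \ref{lemma:fixedimpliescontracting}, the Flat Plane Theorem, and Corollary \ref{nofixedpoints}. You instead take the endpoints to be $f^{\pm}$ of a rank-one axis, which forces two extra inputs: pointwise north--south dynamics of rank-one elements on $\partial_cX$ (which the paper only obtains later, as a corollary of Theorem \ref{thm:conv}, but which your translate-the-tail argument --- $[x,f^nx]$ concatenated with $f^n[x,\xi)$, plus the visual dynamics of \cite{H09} --- does prove directly and cheaply), and the claim that a finite $G$-orbit of the pair $\{g^+,g^-\}$ forces virtual cyclicity. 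That claim is true but is doing real work --- it is either the standard virtually-cyclic pair-stabilizer fact for rank-one isometries or an application of Corollary \ref{nofixedpoints} to a finite-index subgroup acting geometrically --- so it needs a citation rather than a clause. The trade-off: your route proves contracting-boundary north--south dynamics and the identification $\overline{Gf^+}=\overline{Gf^-}$ en route, while the paper's route is more economical, requires no rank-one machinery in its main proposition, and yields the sharper intermediate dichotomy together with the visual-boundary density statement of Corollary \ref{por:orbitsarevisuallydense} for free.
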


This is very similar to a result of Hamenst\"adt on the limit set when the group contains a rank one isometry \cite{H09}.

In the spirit of treating the contracting boundary as a replacement of the Gromov boundary for CAT(0) spaces it is natural to ask is whether axial isometries act with North-South Dynamics and if the group $G$ acts as a convergence group action on $\partial_cX$. Recall that an axial isometry is an isometry that fixes a geodesic, called the axis of the isometry, these are also called loxodromic or hyperbolic isometries in the literature. Because the contracting boundary is not compact the classical formulations of these dynamical properties will have to be reinterpreted somewhat. 

\begin{theorem} \label{thm:conv} Let $X$ be a proper CAT(0) space on which $G$ acts geometrically. Let $g_i$ be a sequence of isometries in $G$ such that $g_ix \to \gamma^+$ where $\gamma^+ \in \partial_cX$, then there is a subsequence of $g_i$'s where $g_i^{-1}x \to \gamma^-$ for some $\gamma^- \in \partial_cX$ and for every open neighborhood $U$ of $\gamma^+$ and every compact set $K \subseteq \partial_cX - \gamma^-$ we have uniform convergence of $g_i(K) \to \gamma^+$.

\end{theorem}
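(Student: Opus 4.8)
The plan is to read the statement as a convergence-group property and to push everything through control of the contracting constants, since the direct limit topology is the only feature distinguishing this from the usual $\pi$-convergence on the visual boundary. First I would extract the subsequence: as $X$ is proper, $\overline{X}=X\cup\partial X$ is compact, so after passing to a subsequence $g_i^{-1}x\to\gamma^-$ for some $\gamma^-\in\partial X$. The first genuine point is that $\gamma^-$ is contracting. Because $\gamma^+\in\partial_cX$, the ray $[x,\gamma^+)$ is $A$-contracting, and since $g_ix\to\gamma^+$ I would argue that the segments $[x,g_ix]$ are uniformly contracting (this is where the contracting hypothesis on $\gamma^+$ is essential); applying the isometry $g_i^{-1}$, the segments $[x,g_i^{-1}x]$ are then uniformly contracting with the same constant, and Arzel\`a--Ascoli together with Lemma~\ref{lemma:closetocontracting} shows that the limiting ray $[x,\gamma^-)$ is contracting, so $\gamma^-\in\partial_cX$. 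Write $r^{\pm}=[x,\gamma^{\pm})$ for the resulting contracting rays.

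Next I would set up the compact set. By Lemma~\ref{compactsets} we may write $K=C\cap\partial_c^DX$, so every $\xi\in K$ is $D$-contracting and $K$ is compact in the visual boundary. The role of the hypothesis $\gamma^-\notin K$ is to give a \emph{uniform} separation: I claim $\sup_{\xi\in K}\sup_t d\big(x,\pi_{r^-}(\xi(t))\big)=:P<\infty$. For a single $\xi\neq\gamma^-$ this supremum is finite because $r^-$ is contracting and two distinct contracting rays from $x$ must diverge (Lemma~\ref{lemma:boundedprojection}); uniformity over $K$ follows from a contradiction argument, for if rays $\xi_k\in K$ had projections onto $r^-$ escaping to infinity then, using compactness of $\partial_c^DX$ and Lemma~\ref{convergenceincontractingboundary}, a subsequence would converge to a ray fellow-traveling $r^-$ forever, i.e. to $\gamma^-$, contradicting $\gamma^-\notin K$.

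The heart of the argument is the following uniform estimate: there is a constant $P'$ so that for every $\xi\in K$ and every large $t$ the geodesic $[x,g_i\xi(t)]$ passes within $P'$ of $g_ix$. I would prove this by transporting the configuration by $g_i^{-1}$ and examining the triangle $\Delta\big(g_i^{-1}x,\,x,\,\xi(t)\big)$. The side $[x,g_i^{-1}x]$ is uniformly contracting, hence uniformly slim (Lemma~\ref{lemma:slimgeodesics}), and by the previous step the projection of $\xi(t)$ onto this side lands within a bounded distance of $x$; the slimness condition then forces the opposite side $[g_i^{-1}x,\xi(t)]$ to come within a uniform distance $P'$ of $x$ (this is the same mechanism as Lemma~\ref{lemma:infiniteslimness}). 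Applying the isometry $g_i$ carries this witness onto $[x,g_i\xi(t)]$ and to within $P'$ of $g_ix$, as claimed.

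Finally I would assemble the two conclusions. Let $p_i\in[x,g_i\xi)$ be the point produced above; then $d(p_i,g_ix)\le P'$ and $d(x,p_i)\ge d(x,g_ix)-P'\to\infty$ uniformly in $\xi$, and since $g_ix\to\gamma^+$ the initial segments of $[x,g_i\xi)$ up to $p_i$ converge to $r^+$ uniformly on compacta, which is exactly uniform visual convergence $g_i\xi\to\gamma^+$. At the same time $[x,g_i\xi)$ lies uniformly close to the concatenation of the uniformly contracting segment $[x,g_ix]$ with the $D$-contracting ray $[g_ix,g_i\xi)$, so Lemmas~\ref{lemma:concatenationsarecontracting} and~\ref{lemma:closetocontracting} bound its contracting constant by a single $E$ independent of $i$ and $\xi$; thus $g_iK\subseteq\partial_c^EX$. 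Enlarging $E$ so that $\gamma^+\in\partial_c^EX$, the trace $U\cap\partial_c^EX$ is a visual-open neighborhood of $\gamma^+$ in the compact metrizable component $\partial_c^EX$, and the uniform visual convergence places $g_i\xi$ in it for all large $i$, uniformly in $\xi\in K$; by the direct limit description of convergence in Lemma~\ref{convergenceincontractingboundary} this is precisely $g_iK\subseteq U$. The step I expect to be the true obstacle is maintaining uniform control of the contracting constants throughout --- getting $\gamma^-$ into $\partial_cX$ at the start and, above all, landing the entire family $g_iK$ in one component $\partial_c^EX$ at the end --- since this is exactly the feature of the direct limit topology that makes the statement stronger than the classical $\pi$-convergence on the visual boundary and that prevents the known visual-boundary theorems from applying directly.
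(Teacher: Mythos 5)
Your outline tracks the paper's proof quite closely in architecture: the subsequence extraction (the paper gets it from Ballmann--Buyalo rather than bare properness), the reduction of everything to uniform control of contracting constants, a visual $\pi$-convergence estimate (the paper's Lemma~\ref{NScore}), and the final conversion into the direct limit topology --- your observation that $U\cap\partial_c^EX$ is open in the compact metrizable component is a clean substitute for the paper's Lemma~\ref{thickopensets}, and your compactness argument for the uniform projection bound over $K$ is sound. But the load-bearing step of your proposal fails. Visual convergence $g_ix\to\gamma^+$ with $\gamma^+$ contracting does \emph{not} make the segments $[x,g_ix]$ uniformly contracting. Take the paper's own example space, the universal cover $\tilde X$ of the Salvetti complex of $A_\Gamma=\langle a,b,c\mid[b,c]\rangle$, and set $g_j=a^{j^2}b^j$. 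The geodesic $[x,g_jx]$ spells $a^{j^2}b^j$, so it agrees with the $a$-ray for time $j^2$ and hence $g_jx\to a^\infty\in\partial_c\tilde X$ in the cone topology; but its terminal $b^j$-leg lies in a $\langle b,c\rangle$-flat, so its optimal contracting constant grows linearly in $j$ (the same computation as for the family $w_i^j$ in Section~\ref{sec:metrizability}). Worse, $g_j^{-1}x=b^{-j}a^{-j^2}x\to b^{-\infty}$, which is \emph{not} contracting --- so the very first conclusion you draw from this step, that $\gamma^-\in\partial_cX$, cannot be derived from visual convergence of $g_ix$ alone, and with it falls your later uniform bound $E$ on the constants of $g_iK$ and every use of uniform slimness of $[x,g_i^{-1}x]$. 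What this really shows is that the bare visual reading of ``$g_ix\to\gamma^+$'' cannot support the argument; some strengthened hypothesis (e.g.\ that the $[x,g_ix]$ are uniformly contracting, equivalently that $g_ix$ stays boundedly close to a contracting geodesic $\gamma(t)$) has to be imposed or established.

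In fairness, you have put your finger on exactly the spot where the paper itself is thinnest: the paper simply asserts that ``there is a $D$ which bounds the distance of the $g_ix$ from $\gamma(t)$'' because of the two convergences, and defers the contracting-ness of $\gamma^-$ to a ``straightforward'' remark --- the same uniformity you need and do not prove. Your closing sentence correctly predicts the obstacle, but flagging it is not the same as overcoming it; your proposal assumes it. There is also a secondary gap: even granting uniform constants for $c_i=[x,g_i^{-1}x]$, your transfer of the projection bound from the ray $r^-$ to the segments $c_i$ is not automatic. Visual convergence controls only initial segments of $c_i$; the terminal portion, which is where $\pi_{c_i}(\xi(t))$ might a priori land, can stray arbitrarily far from $r^-$ --- already in $\mathbb{H}^2$ one can have $g_i^{-1}x\to\gamma^-$ with $d(g_i^{-1}x,r^-)\to\infty$. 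The paper's Lemma~\ref{NScore} handles this point by an explicit convexity computation pinning down the slim point $c_i(p_i)$ against a visual neighborhood $U(\gamma^-,s,\varepsilon)$ that $K$ avoids; some such quantitative argument is needed in your step as well, rather than the asserted transfer via Lemma~\ref{lemma:infiniteslimness}.
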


This theorem is closer to Papasoglu and Swenson's $\pi$-convergence from \cite{PS} than it is to a true convergence action. A corollary of this theorem is that rank-1 isometries act with a version of North-South dynamics on the contracting boundary.

\begin{corollary} Let $X$ be a proper CAT(0) space and let $G$ be a group acting geometrically on it. If $g$ is a rank-1 isometry in $G$, $U$ is an open neighborhood of $g^{\infty}$ and $K$ is a compact set in $\partial_cX - g^{-\infty}$ then for sufficiently large $n$, $g^{n}(K) \subseteq U$.

\end{corollary}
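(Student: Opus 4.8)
The plan is to deduce the corollary from Theorem~\ref{thm:conv} by applying the theorem to the specific sequence $g_i = g^i$ where $g$ is the given rank-1 isometry. First I would recall that a rank-1 isometry $g$ has an axis $a(t)$ which is a contracting bi-infinite geodesic, with attracting endpoint $g^{\infty} = a(\infty)$ and repelling endpoint $g^{-\infty} = a(-\infty)$. The key observation is that if we set $x = a(0)$ (or any point near the axis), then $g^n x = a(n \cdot |g|)$ travels along the axis toward $a(\infty)$, so that $g^n x \to g^{\infty}$ both in the visual boundary and, since all these points lie on a single contracting geodesic of fixed contracting constant, in the contracting boundary as well. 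Thus the hypothesis $g_i x \to \gamma^+$ of Theorem~\ref{thm:conv} is satisfied with $\gamma^+ = g^{\infty}$.

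The second step is to identify $\gamma^-$. Theorem~\ref{thm:conv} produces a subsequence along which $g_i^{-1} x = g^{-i} x \to \gamma^-$. But by the same reasoning applied to $g^{-1}$, the full sequence $g^{-n} x$ converges to $a(-\infty) = g^{-\infty}$ in the contracting boundary, so every subsequence converges to the same limit and we must have $\gamma^- = g^{-\infty}$. This removes the subsequential ambiguity in the statement of Theorem~\ref{thm:conv} and lets us work with the full sequence $g^n$.

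With $\gamma^+ = g^{\infty}$ and $\gamma^- = g^{-\infty}$ pinned down, the conclusion of Theorem~\ref{thm:conv} gives, for the open neighborhood $U$ of $g^{\infty}$ and the compact set $K \subseteq \partial_c X - g^{-\infty}$, an integer $N$ such that $g^n(K) \subseteq U$ for all $n \geq N$. This is exactly the statement of the corollary. The only point requiring care is that Theorem~\ref{thm:conv} only asserts the inclusion along a subsequence $g_i$, whereas the corollary asserts it for all sufficiently large $n$; this is precisely why the second step matters, since once we know the convergence $g^{-n} x \to g^{-\infty}$ holds for the \emph{entire} sequence, the uniform-convergence conclusion $g^n(K) \to g^{\infty}$ also upgrades from a subsequence to the whole sequence.

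The main obstacle I anticipate is justifying that $g^n x$ and $g^{-n} x$ converge in the \emph{contracting} (direct-limit) topology rather than merely the visual topology. By Lemma~\ref{convergenceincontractingboundary} this requires two things: convergence in the visual boundary, which is standard for translates along an axis, and a uniform bound on contracting constants. The latter is where the rank-1 (equivalently, contracting) hypothesis on the axis is essential, together with Lemma~\ref{lemma:closetocontracting} to control the contracting constants of the geodesics $[x, g^n x]$ uniformly in $n$; one should verify that because all these segments lie within a bounded neighborhood of the single $A$-contracting axis, their contracting constants stay uniformly bounded, so condition~(1) of Lemma~\ref{convergenceincontractingboundary} holds.
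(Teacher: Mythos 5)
Your proposal is correct and follows essentially the same route as the paper, which states this corollary as an immediate consequence of Theorem~\ref{thm:conv} applied to the sequence $g_i = g^i$ with $x$ on the contracting axis, so that $g^i x \to g^{\infty}$ and $g^{-i}x \to g^{-\infty}$. Your careful handling of the subsequence ambiguity amounts to invoking the full-sequence version, Theorem~\ref{nsdynamics}, which is precisely the form the paper proves first and from which Theorem~\ref{thm:conv} is deduced, so no upgrade argument is even needed.
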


\subsection{Failure of classical North-South dynamics}

By classical North-South dynamics we mean the following theorem. 

\begin{theorem} If $G$ is a $\delta$-hyperbolic group acting on its Cayley graph $X$ and if $g$ is an infinite order element then for all open sets $U$ and $V$ with $g^\infty \in U$ and $g^{-\infty} \in V$ then $g^n(V^c) \subseteq U$ for large enough $n$. 

\end{theorem}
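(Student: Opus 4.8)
The plan is to translate the problem into the growth of a single Gromov product under iteration of $g$ and then feed in the fact that an infinite order element of a hyperbolic group is loxodromic. Write $\partial X$ for the Gromov boundary of the (locally finite, hence proper) Cayley graph; since $G$ is hyperbolic, $\partial X$ is compact, and convergence $\xi_k \to \eta$ is equivalent to $(\xi_k \mid \eta)_{x_0} \to \infty$, where $(\cdot \mid \cdot)_z$ denotes the Gromov product and $x_0$ is the identity vertex. Throughout I will use the standard coarse fact that $(a \mid b)_z$ agrees with $d(z,[a,b])$ up to an additive error depending only on $\delta$, where $[a,b]$ is any geodesic joining $a$ and $b$ (with endpoints allowed in $\partial X$).

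First I would set up the axis of $g$. Because $g$ has infinite order in a hyperbolic group, $n \mapsto g^n x_0$ is a quasi-isometric embedding of $\mathbb{Z}$ with constants depending only on $g$ and $\delta$; by the Morse stability lemma it lies within a uniform Hausdorff distance $R$ of a bi-infinite geodesic $\gamma$. Its endpoints are the distinct fixed points $g^{\pm\infty} = \lim_{n\to\pm\infty} g^n x_0$, and $g$ coarsely translates $\gamma$ toward $g^\infty$ with positive stable translation length $\tau$, so that $z_n := g^{-n} x_0$ recedes to $g^{-\infty}$ while staying within $R$ of $\gamma$ at distance $\approx n\tau$ from $\gamma(0)$.

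Next I would reduce to a uniform product estimate. Since $V$ is open and $g^{-\infty}\in V$, compactness of $\partial X$ makes $K := V^c$ a compact set omitting $g^{-\infty}$; hence there is a uniform bound $(\xi \mid g^{-\infty})_{x_0} \le M$ for all $\xi \in K$. As every neighborhood $U$ of $g^\infty$ contains some set $\{\eta : (\eta \mid g^\infty)_{x_0} > T\}$, it suffices to prove that $\inf_{\xi \in K}(g^n \xi \mid g^\infty)_{x_0} \to \infty$. Using invariance of the Gromov product under the isometry $g^n$ together with $g^n g^\infty = g^\infty$, I would rewrite
\[
(g^n \xi \mid g^\infty)_{x_0} = (\xi \mid g^\infty)_{g^{-n} x_0} = (\xi \mid g^\infty)_{z_n},
\]
so that the entire dependence on $n$ is now carried by the receding basepoint $z_n$.

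The heart of the matter — and the step I expect to be the main obstacle — is a uniform linear lower bound: for any $z$ within $R$ of $\gamma$ at distance $t$ from $\gamma(0)$ on the $g^{-\infty}$ side, and any $\xi$ with $(\xi \mid g^{-\infty})_{x_0}\le M$, I claim $(\xi \mid g^\infty)_z \ge t - C$ with $C = C(M,R,\delta)$. I would prove this from the ultrametric-type inequality
\[
(\xi \mid g^\infty)_z \ge \min\{\,(\xi \mid \gamma(0))_z,\ (\gamma(0)\mid g^\infty)_z\,\} - \delta .
\]
For the second term, a geodesic $[z, g^\infty)$ runs back along $\gamma$ through the $\gamma(0)$ region, so $(\gamma(0)\mid g^\infty)_z \approx d(z,\gamma(0)) \approx t$. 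For the first term, the bound $(\xi\mid g^{-\infty})_{x_0}\le M$ forces the geodesic $[\gamma(0),\xi)$ to leave $\gamma(0)$ away from the $g^{-\infty}$ direction, hence to stay $\approx t$ away from $z$, giving $(\xi\mid\gamma(0))_z \ge t - C$ as well; here $M$ and $R$ enter only additively. Applying this with $z = z_n$ and $t \approx n\tau$ yields $(g^n\xi\mid g^\infty)_{x_0} \ge n\tau - C'$ uniformly over $\xi\in K$, which tends to infinity and therefore places $g^n(K)$ inside $U$ for all sufficiently large $n$.
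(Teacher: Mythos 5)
Your argument is correct, but there is nothing in the paper to compare it against: the paper states this theorem without proof, as classical background (the subsection it sits in exists only to contrast the classical statement with its \emph{failure} on the contracting boundary, and the paper's citations for such hyperbolic-group dynamics facts are to standard references). So judged on its own, your proof is a clean rendition of the standard argument: infinite order implies loxodromic, Morse stability supplies the axis $\gamma$, and the equivariance identity $(g^n\xi \mid g^\infty)_{x_0} = (\xi \mid g^\infty)_{g^{-n}x_0}$ moves all the $n$-dependence into a basepoint receding toward $g^{-\infty}$, after which two applications of the $\delta$-inequality for Gromov products yield the uniform linear lower bound. The steps you assert without proof are genuinely standard and fill in correctly: the uniform bound $(\xi \mid g^{-\infty})_{x_0} \le M$ on $K = V^c$ holds because $(\xi_k \mid g^{-\infty})_{x_0} \to \infty$ would force $\xi_k \to g^{-\infty}$, contradicting $g^{-\infty} \notin K$ with $K$ closed; positivity of $\tau$ follows from loxodromicity via subadditivity (and in fact all you need is $d(x_0, g^{-n}x_0) \to \infty$). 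In the key estimate, the cleanest way to finish the first term is the exact identity $(\xi \mid \gamma(0))_z + (\xi \mid z)_{\gamma(0)} \doteq d(z,\gamma(0))$ together with an upper bound $(\xi \mid z)_{\gamma(0)} \le M + O(\delta, R)$ extracted from $(\xi \mid g^{-\infty})_{\gamma(0)} \ge \min\{(\xi \mid z)_{\gamma(0)}, (z \mid g^{-\infty})_{\gamma(0)}\} - \delta$ once $t$ exceeds a threshold depending on $M$; this matches your sketch, with $M$ and $R$ entering only additively as you say.

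Two small caveats, neither a gap. First, the extended Gromov product on the boundary satisfies the ultrametric-type inequality only up to a constant multiple of $\delta$ (e.g.\ $2\delta$, depending on whether one takes $\liminf$ over sequences), but since you work coarsely throughout this is harmless. Second, the theorem as stated does not specify the ambient space of $U$, $V$, and $V^c$; you read them inside $\partial X$, which is the reading consistent with the paper's context (all of its open sets in the surrounding discussion are boundary sets of the form $U_{a(\pm\infty)}(r,\epsilon)$ intersected with the boundary). If one instead wanted the version on the compactification $X \cup \partial X$, your proof as written handles only the boundary portion of $V^c$, though the same product estimate extends to interior points with trivial modifications.
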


It is a well established fact that for CAT(0) groups the classical version of North-South dynamics of axial isometries on the visual boundary fails. In particular, if the isometry is not rank-one whole flats may be fixed by the isometry.

\bigskip

Unfortunately, even if $g$ is a rank-1 element of $G$, this classical version of North-South dynamics on $\partial_cX$ still fails. If $a$ is an axis for $g$ there are open sets $U$ and $V$ of $a(\infty)$ and $a(-\infty)$ such that $g^N(\partial_cX \backslash V) \not\subseteq  U$ for any $N$.

Note: This is in direct contrast with the subspace topology on the set of contracting geodesics, ($\partial^{sub}_c X$). In \cite{H09} and \cite{B95} it was proven that rank-1 isometries act on the entire visual boundary with North-South dynamics and thus on any subspace containing the end points.

\bigskip

For an example of the failure of the classical North-South dynamics of rank-1 isometries on the contracting boundary consider the RAAG, $A_\Gamma = \langle a,b,c  \; | \; [b,c] \rangle$. This is the fundamental group of the Salvetti complex, X (see figure \ref{fig:salvetti}), and its universal cover, $\tilde{X}$, is a CAT(0) cube complex on which $A_\Gamma$ acts geometrically \cite{CD95}. Let $\gamma$ be an axis for the loxodromic element $a$. Let $b_i$ be the geodesics following the words $a^{-i}b^iaaaa\cdots$. Note that the contracting geodesics $b_i$ do \emph{not} converge to $\gamma(-\infty)$ in the contracting boundary. This is because the intersection of the set $\{b_i\}$ with each of the contracting components $\partial_c^DX$ is a finite set and therefore closed in the subspace topology, and thus $\{b_i\}$ is closed in $\partial_cX$. 

\bigskip
\bigskip

\begin{figure}[h]
\begin{tikzpicture}

	\draw (0,0) ellipse (1.5cm and 0.7cm);
	\draw (1.5,0) node(x){} node[anchor=west]{$x$};
	\draw[color=red] ($(x)+(0.54,0)$) ellipse (0.54cm and 0.3cm);
	\node at ($(x)+(0.54,0)+(0:0.54cm and 0.3 cm)$)[anchor=west,color=red]{$a$};
	\draw[color=cyan] (x) arc (0:180:0.54cm and 0.3cm) node[pos=0.5,anchor=north]{$b$};
	\draw[color=cyan,dashed] (x) arc (0:-180:0.54cm and 0.3cm);
	\draw[color=orange] (x) arc (0:-180:1.5cm and 0.5cm) node[pos=0.5,anchor=south]{$c$};
	\draw[color=orange,dashed] (x) arc (0:180:1.5cm and 0.5cm);
	\draw (0.42,0) arc (65:115:1cm);
	\draw (0.5,0.04) arc (-60:-120:1cm);
	\node[fill,circle,scale=0.3] at (x){};

\end{tikzpicture}
\caption{The Salvetti complex $X$ of $A_\Gamma$.}
\label{fig:salvetti}
\end{figure}

\bigskip

The set $V = \left(U_{\gamma^-}(r,\epsilon)\cap\partial_cX\right) \setminus \{b_i\}$ is then an open set around $\gamma(-\infty)$ but for all $N$ we have $a^Nb_{N} \not\in U_{\gamma^+}(r',\epsilon')$ for all $\epsilon' < r'$.

\subsection{Proof of Theorem \ref{thm:orbitsaredense}}
\label{sec:orbitsaredense}

The first step in proving this theorem will be to prove an initially weaker result. We will prove that for a cocompact CAT(0) space, the orbit of a point in the contracting boundary is either a singleton or is dense. The proof relies on the observation that the orbit of a bi-infinite geodesic is easier to understand and contains more geometric information than the orbit of an infinite ray. We will take some contracting ray and one of its orbit points and connect the two with a bi-infinite geodesic. It is then reasonably easy show that the orbit of this bi-infinite geodesic is dense in the contracting boundary.

\begin{proposition} \label{prop:orbitsaredense} If the action of $G$ on $X$ is cocompact and $\alpha^+ \in \partial_cX$ then $\alpha^+$ is globally fixed by $G$ or its orbit is dense in $\partial_{c}X$. \end{proposition}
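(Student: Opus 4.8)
The plan is to prove the dichotomy directly: assuming $\alpha^+$ is \emph{not} globally fixed, I will show that for an arbitrary $\beta \in \partial_cX$ there is a sequence in the orbit $G\alpha^+$ converging to $\beta$ in the contracting boundary. Since $\alpha^+$ is not fixed, choose $g \in G$ with $\alpha^- := g\alpha^+ \neq \alpha^+$; as $g$ is an isometry, $\alpha^-$ is again contracting. By the visibility property (Lemma \ref{lemma:visibility}) there is a bi-infinite geodesic $\ell = (\alpha^-,\alpha^+)$, and by Lemma \ref{lemma:concatenationsarecontracting}~(iii) this $\ell$ is $A$-contracting for some $A$ depending only on $\alpha^{\pm}$. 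The point to exploit is that every endpoint of every translate $h\ell$, namely $h\alpha^+$ and $h\alpha^- = hg\alpha^+$, already lies in $G\alpha^+$; thus it suffices to show that the endpoints of the $G$-orbit of $\ell$ accumulate at every point of $\partial_cX$.

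Fix the target $\beta$, represented by the $B$-contracting ray $c = [x,\beta)$ based at the basepoint $x$. The governing principle is the convergence criterion of Lemma \ref{convergenceincontractingboundary}: a sequence $\gamma_i \to \beta$ in $\partial_cX$ exactly when (1) the based representatives $[x,\gamma_i)$ are $K$-contracting for a \emph{uniform} $K$, and (2) $\gamma_i \to \beta$ in the visual boundary. My strategy is to use cocompactness to select $g_i \in G$ so that the translated geodesic $g_i\ell$ fellow-travels the initial segment $[x,c(i)]$ of $c$ within a uniform distance $E$, with the $\alpha^+$-end (after possibly interchanging the roles of $\alpha^+$ and $\alpha^-$, which is harmless since both endpoints lie in $G\alpha^+$) continuing out toward $\beta$. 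Granting such a choice, condition (2) is immediate: because $g_i\ell$ stays $E$-close to $c$ out to $c(i)$ and $i \to \infty$, the based rays $[x,g_i\alpha^+)$ agree with $c$ on initial segments of unbounded length, so $g_i\alpha^+ \to \beta$ visually. Condition (1) is where the contracting hypothesis pays off: since $g_i\ell$ passes within the uniform distance $E$ of $x$, Lemma \ref{lemma:closetocontracting} (applied to finite approximations, then Arzel\`a--Ascoli as in Lemma \ref{lemma:concatenationsarecontracting}~(iii)) bounds the contracting constant of $[x,g_i\alpha^+)$ by $\Phi_{\ref{lemma:closetocontracting}}(A,E,0)$, independently of $i$. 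Lemma \ref{convergenceincontractingboundary} then yields $g_i\alpha^+ \to \beta$ in $\partial_cX$, so $G\alpha^+$ is dense.

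The main obstacle is precisely the selection of the $g_i$: I must translate the single fixed geodesic $\ell$ so that it fellow-travels an arbitrary initial segment $[x,c(i)]$ of the ray to $\beta$ while staying uniformly close to the basepoint $x$. Merely using $R$-density of the orbit $Gz$ (with $z = \ell(0)$) to push a point of $\ell$ near $c(i)$ is \emph{not} enough: if $g_i\ell$ meets a neighborhood of $c$ only near the far point $c(i)$, or crosses $c$ transversally there, then the based ray $[x,g_i\alpha^+)$ need neither be uniformly contracting nor converge visually to $\beta$. What is required is simultaneous control of the position \emph{and} the direction of $g_i\ell$ near $x$, which I expect to extract from a cocompactness and recurrence argument on based geodesics together with the bounded-projection property of $c$ (Lemma \ref{lemma:boundedprojection}): any infinite geodesic projects to a bounded window of the contracting ray $c$, and a geodesic passing close to two well-separated points of $c$ is forced by slimness (Lemmas \ref{lemma:slimgeodesics} and \ref{lemma:contractingflatsarebounded}) to fellow-travel $c$ between them, so that arranging one near point (namely $x$) and one far point (near $c(i)$) propagates to genuine fellow-traveling. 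The essential new difficulty compared with the classical visual-boundary statement is exactly the need to keep contracting constants uniformly bounded, since convergence in the finer direct-limit topology of $\partial_cX$ demands it; controlling these constants is the technical heart of the argument.
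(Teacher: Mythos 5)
Your reduction is sound as far as it goes (the dichotomy setup, the use of both endpoints of translates of a bi-infinite geodesic $\ell=(\alpha^-,\alpha^+)$ as orbit points, and Lemma \ref{convergenceincontractingboundary} as the convergence criterion are all exactly right), but the argument has a genuine gap at precisely the point you flag yourself: the selection of the $g_i$. You require a translate $g_i\ell$ that passes within a uniform distance $E$ of the basepoint $x$ \emph{and} fellow-travels $[x,c(i)]$ with the correct end pointing toward $\beta$, and you only say you ``expect to extract'' this from a cocompactness-and-recurrence argument together with Lemmas \ref{lemma:boundedprojection}, \ref{lemma:slimgeodesics}, and \ref{lemma:contractingflatsarebounded}. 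That expectation is not a proof, and it is not a routine consequence of cocompactness: cocompactness lets you move a point of $\ell$ near $c(i)$, but gives no control whatsoever over whether the rest of $g_i\ell$ returns anywhere near $x$, nor over its direction there. Indeed, getting a single translate close to two prescribed points of $c$ simultaneously is essentially a density-of-directions statement, which is close to what the proposition is trying to establish; as written, the technical heart of the argument is missing.

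The paper's proof shows that this obstacle can be dissolved rather than overcome: it does the ``naive'' thing you dismiss. Choose $g_i$ by cocompactness so that $d(g_ix,b(i))\leq C$ only, with no directional control, and then project the \emph{basepoint} $x$ onto the translated bi-infinite geodesic $g_ia(t)$, landing at parameter $t_i$. After passing to a subsequence so the $t_i$ have a fixed sign, the sign tells you which endpoint to use (both lie in $G\alpha^+$, as you observed). Slimness in the form of Lemma \ref{lemma:infiniteslimness}, together with convexity of the distance between the based ray $k_i=[x,g_i\alpha^{\pm})$ and the sub-ray of $g_ia$ beyond $t_i$, forces $d(g_ix,k_i)\leq\delta_A$, hence $d(b(i),k_i)\leq\delta_A+C$, and Lemma \ref{boundedtrianglemagic} upgrades this to $d(k_i(i),b(i))\leq 2(\delta_A+C)$. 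Visual convergence then follows from convexity alone, and the uniform contracting bound follows by splitting $k_i$ at $k_i(i)$: the segment $k_i|_{[0,i]}$ is uniformly close to the contracting $b|_{[0,i]}$, the tail is uniformly close to $g_ia$, and Lemmas \ref{lemma:closetocontracting} and \ref{lemma:concatenationsarecontracting} combine the two. Note this also sidesteps your requirement that $g_i\ell$ pass near $x$ --- in the paper's construction it need not --- which is why your proposed single application of Lemma \ref{lemma:closetocontracting} with constant $\Phi_{\ref{lemma:closetocontracting}}(A,E,0)$ must be replaced by this two-piece decomposition. If you replace your unproven selection step with this projection-and-sign device, the rest of your outline goes through.
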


\begin{proof} 

First note that if there are only two points in $\partial_cX$ then the proposition is obvious. Either the orbit is a singleton or it is the entire boundary. So from now on we may assume that $|\partial_cX| > 2$ and that $\alpha^+$ isn't globally fixed. 

To show that the orbit is dense it suffices to show that for all $\beta \in \partial_cX$ there exists a sequence of $g_i \in G$ such that $g_i\alpha^+ \to \beta$.  

If $\beta \in G\alpha^+$ then we are done since there is an $h$ such that $\beta = h\alpha^+$ so the constant sequence $g_i = h$ will work. 

If $\beta$ is not in the orbit of $\alpha^+$ pick a point distinct from $\alpha^+$ in $G\alpha^+$ and call it $\alpha^-$, i.e. $h\alpha^+ = \alpha^-$ for some $h \neq e$. By the visibility of $\partial_cX$ there is a geodesic connecting $\alpha^-$ to $\alpha^+$. If we label this geodesic $a$ and pick a base point $x = a(0)$ on it there is also a representative of $\beta$, $b$, such that $b(0) = x$. 

\vspace{12pt}
\emph{Note:} Since $\alpha^+$ and $\alpha^-$ are different elements of the contracting boundary there are two different contracting constants for their representatives $a|_{[0,\infty)}$ and $a|_{(-\infty,0]}$, but by Lemma~\ref{lemma:concatenationsarecontracting} we have a uniform contracting constant for all of $a$ and we shall call it $A$. For the representative, $b$, of $\beta$ let $B$ be its contracting constant. Since Lemma~\ref{lemma:slimgeodesics} guarantees that $a$ and $b$ are slim, we will denote $\delta_A$ and $\delta_B$ as their slimness constants respectively. To make the following discussion simpler, we will assume that $A$ and $B$ are chosen so that all subsegments (finite or infinite) of either geodesic are also contracting with the same constant.

\vspace{12pt}
By the cocompactness of the action of $G$ on $X$ there is a uniform $C > 0$ such that for each $i \in \mathbb{N}$ there is a $g_i \in G$ such that $d(g_ix,b(i)) \leq C$. Since we've picked $g_i$ so that the orbit of $x$ travels up along $b$ we'd like to say that the geodesic $a$ follows suit, but first we need to pass to a subsequence. 

Let $g_ia$ be the bi-infinite geodesic connecting $g_i\alpha^-$ to $g_i\alpha^+$ with base point $g_ix$. Now note that there is a $t_i$ such that $g_ia(t_i)$ is the projection of $x$ on $g_ia$ (see Figure~\ref{fig:converge}).

\begin{figure}
	\includegraphics[clip=true,trim=1in 7.8in 1in 1in]{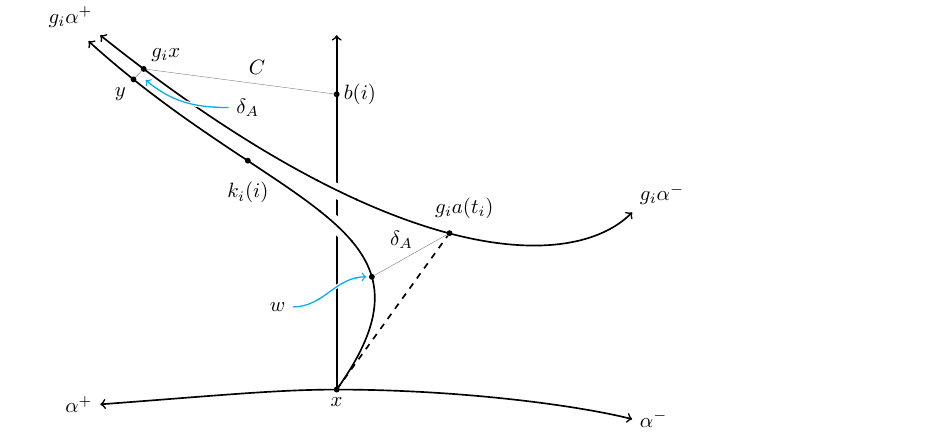}
\caption{Convergence of $\alpha$ translates} \label{fig:converge}
\end{figure}

Infinitely many of the $t_i$ will be either positive or negative, so by passing to a subsequence we may assume that all the $t_i$ have the same sign.

\vspace{12pt}
In the following argument we will consider the case when the $t_i \leq 0$. In this case we will prove that $g_i\alpha^+ \to \beta$. If instead, the $t_i > 0$ the following argument will go through, \emph{mutatis mutandis}, to show that $g_i\alpha^- \to \beta $. Because $\alpha^- = h\alpha^+$, this tells us $g_ih\alpha^+ \to \beta$. Thus, in either case, the orbit of $\alpha^+$ will accumulate on any $\beta \in \partial_cX$.

\vspace{12pt}
Consider the representatives of $g_i\alpha^+$ starting from the base point $x$ and denote them $k_i$. To show that the sequence $k_i$ converges in $\partial_cX$ to $b$, Lemma~\ref{convergenceincontractingboundary} says we only need the following two conditions: 

\vspace{12pt}
\begin{conditions}  
\item \label{condition1} There is a uniform $K$ such that for all $i$, $k_i$ is $K$-contracting.

\item \label{condition2} $k_i$ converges to $b$ in the visual boundary $\partial X$. 
\end{conditions}

\vspace{12pt}
It turns out that these two ingredients are a direct consequence of the following lemma:

\begin{lemma} \label{kiiisclosetobi} There is a constant $C$ such that for each $i$ the following holds: 

$$d(k_i(i),b(i)) \leq 2(\delta_A + C)$$

\vspace{12pt}

\end{lemma}

\begin{proof}[Proof of Lemma~\ref{kiiisclosetobi}] For the following discussion see Figure \ref{fig:converge}. We only need to show that the distance from the point $b(i)$ to the geodesic $k_i$ is $\delta_A + C$ then applying Lemma~\ref{boundedtrianglemagic} we get the result. 

Observe that $g_ia$ is $A$ contracting and thus there is a point $w$ on the geodesic $k_i$ which is within $\delta_A$ of $\pi_{g_ia}(x)$ by Lemma~\ref{lemma:infiniteslimness}.  Recall that $\pi_{g_ia}(x) = g_ia(t_i)$ and that $t_i \leq 0$. By the convexity of the distance function any point along the geodesic $g_ia|_{[t_i,\infty)}$ will also be within $\delta_A$ of $k_i$. In particular, since $g_ix = g_ia(0) \in g_ia|_{[t_i,\infty)}$ then $d(g_ix,k_i) \leq \delta_A$. 

Because of how the $g_i$ were defined we also have that $d(g_ix,b(i)) \leq C$. This lets us conclude that $d(b(i),k_i) \leq \delta_A + C$.

\end{proof}

Lemma~\ref{kiiisclosetobi} is the key to establish conditions \ref{condition1} and \ref{condition2}.

\vspace{12pt}
\begin{proof}[Proof of condition \ref{condition1}] Since $d(k_i(i),b(i)) \leq 2(\delta_A + C)$, which is independent of $i$, so by Lemma~\ref{lemma:closetocontracting} there exists a constant independent from $i$, $\Phi_{\ref{lemma:closetocontracting}}$ such that the geodesic $k_i|_{[0,i]}$ is $\Phi_{\ref{lemma:closetocontracting}}$-contracting. 

\vspace{12pt}
The cocompact constant gives us that $d(b(i),g_ix) \leq C$ so together with Lemma~\ref{kiiisclosetobi} we have $d(k_i(i),g_ix) \leq 2\delta_A + 3C$.

\vspace{12pt}
Because $g_ia|_{[t_i,\infty)}$ is $\delta_A$-slim, for large enough $T$ the point $k_i(T)$ is within $\delta_A$ of $g_ia|_{[t_i,\infty)}$. You can apply Lemma~\ref{lemma:closetocontracting} again to all subsegments $k_i|_{[i,T]}$ with large $T$, thus they are all $\Phi_{\ref{lemma:closetocontracting}}'$-contracting for some $\Phi_{\ref{lemma:closetocontracting}}'$ independent of $i$. This implies that the infinite ray $k_i|_{[i,\infty)}$ is contracting with the same contracting constant.

\vspace{12pt}
The concatenation of $k_i|_{[0,i]}$ and $k_i|_{[i,\infty)}$ gives us the entire geodesic $k_i$. Lemma~\ref{lemma:concatenationsarecontracting} then tells us that for all $i$, the $k_i$ are $\left(\Phi_{\ref{lemma:closetocontracting}} + \Phi_{\ref{lemma:closetocontracting}}'\right)$-contracting.

\end{proof}

\vspace{12pt}
\begin{proof}[Proof of condition \ref{condition2}] Recall that the sets 

$$U_{b}(\varepsilon,r) = \{c \; |\; c(0) = x \mbox{ and } d(c(r),b(r)) < \varepsilon\}$$

\vspace{12pt}
\noindent form a local neighborhood basis for the visual boundary. So for each $U_{b}(\varepsilon,r)$ we need an $N$ such that $k_i \in U_{b}(\varepsilon,r)$ for $i \geq N$. 

$$\displaystyle N(\varepsilon,r) := \max\left\{r \; , \; \; \frac{2r(\delta_A + C)}{\varepsilon}\right\}$$

\vspace{12pt}
\noindent is just such an $N$. When $i \geq N(\varepsilon,r)$ we get the following chain of inequalities:

$$ d(k_i(r),b(r)) \leq \frac{r}{i}d(k_i(i),b(i)) \leq \frac{r}{i}2(\delta_A + C) \leq \varepsilon$$

\vspace{12pt}
The first inequality is just a restatement of the convexity of the distance function (and is the reason $N(\varepsilon,r)$ is chosen as a max), the second is a result of Lemma~\ref{kiiisclosetobi} and the final inequality is just a restatement of the definition of $N(\varepsilon,r)$. Thus we have that the sequence $k_i$ converges to $b$ in the visual boundary.

\end{proof}

Establishing conditions \ref{condition1} and \ref{condition2} tells us that $k_i \to b$ in $\partial_cX_x$. Because $b$ was arbitrary this tells us that the orbit $G\alpha^+$ is dense in $\partial_cX_x$ and so the statement of the proposition is proven.

\end{proof}

The following corollary will come up later and so we will include it here. It states that the orbits of contracting things which aren't globally fixed are dense in the \emph{visual} boundary.

\begin{corollary} \label{por:orbitsarevisuallydense} If $G$ acts cocompactly on $X$ and $\alpha^+ \in \partial_cX$ isn't globally fixed by $G$ then its orbit is dense in $\partial X$. 

\end{corollary}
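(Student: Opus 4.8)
The plan is to observe that the corollary is almost entirely contained in the proof of Proposition~\ref{prop:orbitsaredense} already given: that argument reaches any \emph{contracting} target $\beta$ in the contracting topology, but the part of it that produces \emph{visual} convergence never uses that the target is contracting. So I would simply rerun the construction with an arbitrary visual boundary point as the target and retain only the visual convergence conclusion.

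Concretely, fix an arbitrary $\xi \in \partial X$ and choose a representative ray $b(t)$ with $b(0) = x$. Since $\alpha^+$ is not globally fixed, there is an $h \in G$ with $\alpha^- := h\alpha^+ \neq \alpha^+$, and by visibility (Lemma~\ref{lemma:visibility}) a bi-infinite geodesic $a(t)$ from $\alpha^-$ to $\alpha^+$; let $A$ be a uniform contracting constant for $a(t)$ as in Lemma~\ref{lemma:concatenationsarecontracting}. Cocompactness supplies $g_i \in G$ and $C \ge 0$ with $d(g_ix, b(i)) \le C$; let $k_i(t)$ be the representative of $g_i\alpha^+$ based at $x$, and pass to a subsequence so that the projection parameters $t_i$ all share one sign.

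The heart of the matter is that Lemma~\ref{kiiisclosetobi}, giving $d(k_i(i), b(i)) \le 2(\delta_A + C)$, uses only that $a(t)$ (hence $g_ia(t)$) is $A$-contracting --- via $\delta_A$-slimness (Lemma~\ref{lemma:infiniteslimness}), convexity of the distance function, the bound $d(g_ix, b(i)) \le C$, and Lemma~\ref{boundedtrianglemagic} --- and at no point invokes any contraction property of $b(t)$. Granting this estimate, the verification of condition~\ref{condition2} is a pure convexity computation, $d(k_i(r), b(r)) \le \tfrac{r}{i}\, d(k_i(i), b(i)) \le \varepsilon$ for $i$ large, which is again insensitive to the nature of $b(t)$. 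Thus $k_i(t) \to b(t)$ in the visual boundary, i.e. $g_i\alpha^+ \to \xi$ when $t_i \le 0$; when $t_i > 0$ the same computation gives $g_i\alpha^- \to \xi$, and since $\alpha^- = h\alpha^+$ this reads $(g_ih)\alpha^+ \to \xi$. Either way the orbit of $\alpha^+$ accumulates on $\xi$.

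Since $\xi \in \partial X$ was arbitrary, $G\alpha^+$ is dense in $\partial X$. I do not expect a serious obstacle; the only thing to check carefully --- and the point that makes this a genuine corollary rather than a restatement of Proposition~\ref{prop:orbitsaredense} --- is that condition~\ref{condition1} (the uniform contracting bound, which \emph{did} use that $\beta$ was contracting) is exactly the hypothesis one must drop, and that discarding it leaves the visual-convergence half of the argument fully intact. One could alternatively try the softer route of noting that density in the finer direct-limit topology forces density in the coarser subspace topology, so that $\overline{G\alpha^+}^{\partial X} \supseteq \partial_cX$; but completing this to all of $\partial X$ would require knowing that $\partial_cX$ is visually dense, which is essentially the present statement, so the rerun above is the honest path.
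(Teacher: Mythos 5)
Your proposal is correct and is exactly the paper's argument: the paper proves this corollary by observing that the proof of condition~\ref{condition2} (and the underlying estimate of Lemma~\ref{kiiisclosetobi}) never uses the contracting constant of $b(t)$, so the construction runs verbatim for an arbitrary $\xi \in \partial X$, with condition~\ref{condition1} simply failing and being discarded. Your careful audit of which lemmas depend only on $a(t)$ matches the paper's (terser) justification, so nothing further is needed.
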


\begin{proof} This is an immediate consequence of the proof of condition $(2)$ in the above. At no point was the contracting constant of $b$ used and so replacing it with a non-contracting geodesic gives the same result. (Note that in this case condition \ref{condition1} fails).

\end{proof}

Proposition \ref{prop:orbitsaredense} is the major component of Theorem~\ref{thm:orbitsaredense}, but there remain a few loose ends. Here is an outline what remains of the proof. We need to first show that there are enough contracting geodesics in any cocompact CAT(0) space, namely that if the contracting boundary is not empty it contains at least 2 points. Second, we need to show that if there are exactly two points in the contracting boundary the group is virtually cyclic. This will establish our dicotomy, that our group is virtually cyclic or there are strictly more than two points in our contracting boundary. Finally, it will be easy to then show that if there are more than two points in the contracting boundary, none of them are globally fixed.

\begin{proposition} If $G$ acts geometrically on a proper CAT(0) space $X$ then $|\partial_cX| = 2$ if and only if $G$ is virtually $\mathbb{Z}$. 

\end{proposition}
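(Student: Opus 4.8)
The plan is to prove both implications, with the forward direction ($|\partial_cX|=2 \Rightarrow G$ virtually $\mathbb{Z}$) carrying all the weight. The reverse direction is quick: if $G$ is virtually $\mathbb{Z}$ then, since the action is geometric, the \v{S}varc--Milnor lemma gives that $X$ is quasi-isometric to $\mathbb{Z}$, hence to $\mathbb{R}$. As the contracting boundary is a quasi-isometry invariant (Charney--Sultan), $\partial_cX$ is homeomorphic to $\partial_c\mathbb{R}$; and $\mathbb{R}$ has exactly two geodesic rays up to asymptotic equivalence, each trivially contracting, so $|\partial_cX| = |\partial_c\mathbb{R}| = 2$.

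For the forward direction I would first extract geometry from $\partial_cX$. Since $\partial_cX \neq \emptyset$, Corollary \ref{therearerankoneaxes} supplies a rank-one isometry $g$ whose axis $a(t)$ is a contracting bi-infinite geodesic, say $A$-contracting (a uniform constant for the whole line follows from Lemma \ref{lemma:concatenationsarecontracting}), and hence $\delta$-slim by Lemma \ref{lemma:slimgeodesics}. Its two distinct endpoints $\alpha^\pm = a(\pm\infty)$ lie in $\partial_cX$, so $\partial_cX = \{\alpha^+,\alpha^-\}$. Because isometries preserve contracting constants, $G$ acts on the two-point set $\partial_cX$; that is, $G$ merely permutes $\{\alpha^+,\alpha^-\}$.

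The crux is to show that $X$ lies in a bounded neighborhood of the line $a(\mathbb{R})$. For any $g' \in G$ the translate $g'a(\mathbb{R})$ is again a bi-infinite geodesic, and its endpoint set $\{g'\alpha^+,g'\alpha^-\}$ equals $\{\alpha^+,\alpha^-\}$; thus $g'a$ and $a$ share the same endpoint pair. Two geodesics with the same endpoints are within \emph{some} bounded Hausdorff distance, since their forward rays are asymptotic to $\alpha^+$ and their backward rays to $\alpha^-$, and asymptotic rays stay in a bounded neighborhood of one another. The essential point, and the step I expect to be the main obstacle, is to make this bound \emph{uniform} in $g'$: here Lemma \ref{lemma:contractingflatsarebounded} is exactly what is needed, upgrading ``bounded distance'' to containment in the fixed $2\delta$-neighborhood $N_{2\delta}(a(\mathbb{R}))$, independently of $g'$. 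Hence $g'a(0) \in N_{2\delta}(a(\mathbb{R}))$ for every $g' \in G$, so the whole orbit $G\cdot a(0)$ lies in $N_{2\delta}(a(\mathbb{R}))$.

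Finally, cocompactness provides a compact $K$ with $GK = X$ and $a(0) \in K$, so $X = N_{\mathrm{diam}(K)}(G\cdot a(0)) \subseteq N_{2\delta + \mathrm{diam}(K)}(a(\mathbb{R}))$. Thus the isometric embedding $\mathbb{R} \cong a(\mathbb{R}) \hookrightarrow X$ has coarsely dense image, so $X$ is quasi-isometric to $\mathbb{R}$; as $G$ acts geometrically, $G$ is quasi-isometric to $\mathbb{R}$ and therefore two-ended, i.e. virtually $\mathbb{Z}$. The delicate point throughout is precisely the verification that the hypotheses of Lemma \ref{lemma:contractingflatsarebounded} apply simultaneously to all translates $g'a$, which is what converts the permutation action on $\{\alpha^+,\alpha^-\}$ into the uniform geometric control that cocompactness can then exploit.
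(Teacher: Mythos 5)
Your proof is correct and follows essentially the same route as the paper: both directions match, with the forward implication resting on the same key steps (every $G$-translate of a contracting line shares the endpoint pair $\{\alpha^+,\alpha^-\}$, Lemma \ref{lemma:contractingflatsarebounded} gives the uniform $2\delta$-bound, and cocompactness then makes $a(\mathbb{R})$ coarsely dense, so $X$ is quasi-isometric to $\mathbb{R}$), and the reverse implication using quasi-isometry invariance of $\partial_c$. The only cosmetic difference is that you source the contracting line from a rank-one axis via Corollary \ref{therearerankoneaxes}, whereas the paper simply takes the geodesic joining the two boundary points (supplied by visibility, Lemma \ref{lemma:visibility}), which avoids needing the fact that a rank-one axis is contracting.
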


\begin{proof} $\Rightarrow$ Let $a$ be a contracting geodesic connecting the two points in $\partial_cX$. Recall that this implies that $a$ is $\delta$-slim for some $\delta$. Because the action of $G$ on $X$ is cocompact there is some $C$ such that for all points $x \in X$ there is some $g_x$ such that $d(g_xa(0),x) \leq C$. Because the contracting boundary only contains two points then $g_xa$ is a bi-infinite geodesic which is asymptotic to the bi-infinite geodesic $a$. By Lemma~\ref{lemma:contractingflatsarebounded} we have that $d(g_xa,a) \leq 2\delta$ so the distance between $x$ and $a$ is bounded by $2\delta+C$. Thus $a$ is a quasi-surjective quasi-isometric embedding of $\mathbb{R}$, i.e. $X$ is quasi-isometric to the real line, and thus $G$ is QI to $\mathbb{Z}$. It is a standard exercise to show that a group which is QI to $\mathbb{Z}$ is virtually cyclic. For a sketch of the proof see \cite[pg 10 exercise 1.16]{GH}

$\Leftarrow$ If $G$ is virtually $\mathbb{Z}$ then it is QI to $\mathbb{R}$. The contracting boundary of a CAT(0) space is a QI invariant, so $\partial_cX  = \partial_c\mathbb{R} $ which is two discrete points. 

\end{proof}

\begin{lemma} If $X$ is a proper CAT(0) space with a geometric action and $\partial_cX \neq \varnothing$ then $|\partial_cX| \geq 2$.

\end{lemma}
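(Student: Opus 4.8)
The plan is to exhibit two distinct points of $\partial_cX$ as the two endpoints of a periodic rank-one axis. Since $\partial_cX \neq \varnothing$, the space $X$ contains a contracting ray, so Corollary~\ref{therearerankoneaxes} applies and furnishes a rank-1 isometry $g \in G$. Let $\sigma(t)$ be an axis for $g$: this is a bi-infinite geodesic preserved by $g$ which, being rank-one, is contracting as a bi-infinite geodesic (this is where Lemma~\ref{lemma:slimgeodesics} and the Ballmann--Buyalo correspondence behind Corollary~\ref{therearerankoneaxes} enter). I claim the two ends $\sigma(\infty)$ and $\sigma(-\infty)$ are two distinct points of $\partial_cX$, which immediately gives $|\partial_cX| \geq 2$.

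There are two things to check. First, both ends are contracting: the forward ray $\sigma|_{[0,\infty)}$ and the reversed backward ray $\sigma|_{(-\infty,0]}$ are each subsegments of the contracting geodesic $\sigma(t)$, so Lemma~\ref{lem:subsegments} shows each is $\Phi_{\ref{lem:subsegments}}$-contracting, placing both $\sigma(\infty)$ and $\sigma(-\infty)$ in $\partial_cX$. Second, the two ends are distinct in $\partial X$ (hence in $\partial_cX$): if $\sigma(\infty) = \sigma(-\infty)$ then $\sigma|_{[0,\infty)}$ and the reverse of $\sigma|_{(-\infty,0]}$ would be asymptotic and thus stay within a bounded neighborhood of one another, contradicting $d(\sigma(t),\sigma(-t)) = 2t \to \infty$. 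Therefore $\sigma(\infty) \neq \sigma(-\infty)$ and the boundary has at least two points.

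\textbf{Main obstacle.} The argument is short, and the only step requiring care is the interface with Corollary~\ref{therearerankoneaxes}: I must confirm that the rank-1 isometry it produces comes with an honest $g$-invariant bi-infinite geodesic (its axis) on which $g$ translates, and that ``rank-one geodesic'' is being used synonymously with ``contracting bi-infinite geodesic'' in this cocompact CAT(0) setting. Both facts are standard — the former is the usual existence of axes for axial isometries, and the latter is exactly the equivalence packaged in Lemma~\ref{lemma:slimgeodesics} — so once the terminology is unwound the conclusion is immediate. It is worth noting that a purely visibility-based approach (Lemma~\ref{lemma:visibility}) does \emph{not} obviously work here, since visibility produces a bi-infinite geodesic from $a(\infty)$ to an arbitrary $b(\infty)$ without guaranteeing that the second endpoint is itself contracting; this is precisely why the rank-one axis, whose \emph{both} ends are forced to be contracting, is the right object to use.
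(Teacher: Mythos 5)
Your proof is correct in substance but takes a genuinely different route from the paper's. The paper argues by cases on the $G$-orbit of the given boundary point $\alpha$: if the orbit is not a singleton, any translate $g\alpha \neq \alpha$ is a second point of $\partial_cX$ (isometric images of contracting rays are contracting) and we are done; if $\alpha$ is globally fixed, Lemma~\ref{lemma:fixedimpliescontracting} forces \emph{every} ray in $X$ to be contracting, and then the axis of any infinite-order element of $G$ (which exists by the result of Swenson the paper cites) supplies two distinct endpoints that are automatically contracting. That argument uses only facts proved in the paper plus the existence of axes. Your route instead invokes Corollary~\ref{therearerankoneaxes} to produce a rank-one axial isometry and takes the two ends of its axis; this is shorter, and your closing observation that visibility (Lemma~\ref{lemma:visibility}) alone would not suffice --- because it gives no control on the contracting property of the second endpoint --- is apt.

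The one step you must repair is the justification that the rank-one axis $\sigma$ is contracting. Lemma~\ref{lemma:slimgeodesics} is the equivalence \emph{contracting $\Leftrightarrow$ slim}; it says nothing about rank-one geodesics, and neither does the Ballmann--Buyalo proposition behind Corollary~\ref{therearerankoneaxes}, whose conclusion is only the existence of a periodic geodesic bounding no flat half-plane (the standard meaning of ``rank one''). The implication ``periodic rank-one $\Rightarrow$ contracting'' is a genuine theorem of Bestvina--Fujiwara \cite{BF09}, not a definition-chase, and is not packaged anywhere in the lemmas you name. The paper does use this identification silently elsewhere (e.g., Lemma~\ref{actionpreservesconstants} takes a rank-one axis with a contracting constant, as does the proof of Theorem~\ref{thm:characterizationofhyperbolicity}), so your argument is sound once you cite the Bestvina--Fujiwara characterization in place of Lemma~\ref{lemma:slimgeodesics}. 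With that citation fixed, the rest of your proof is complete: subsegments of a contracting geodesic are contracting (Lemma~\ref{lem:subsegments}), so both ends lie in $\partial_cX$, and $d(\sigma(t),\sigma(-t)) = 2t \to \infty$ rules out the two ends being asymptotic, giving $|\partial_cX| \geq 2$.
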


\begin{proof} Since the contracting boundary is non-empty we have at least one contracting ray $a$, now look at the orbit of $a$, if it is not fixed we're done since the orbit of a contracting ray is contracting. If it is fixed then by Lemma~\ref{lemma:fixedimpliescontracting} every geodesic ray is contracting. So now the only way that we wouldn't have at least two points in the contracting boundary was if all infinite geodesics were asymptotic. However, if a CAT(0) group is not finite, it contains an infinite order element which has an axis in $X$, for a proof see \cite{S99}.

\end{proof}

\begin{proposition}[The Flat Plane Theorem] \label{flatplanetheorem}If a group $G$ is acting geometrically on a CAT(0) space, $X$, then $X$ is $\delta$-hyperbolic if and only if $X$ contains no Euclidean flats $\mathbb{E}^2$. 

\end{proposition}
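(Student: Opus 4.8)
The plan is to prove the two implications separately, with the forward direction routine and the converse carrying all the weight. For the forward direction I would argue the contrapositive: if $X$ contains an isometrically embedded flat $F \cong \mathbb{E}^2$, then since $F$ is convex in the CAT(0) space $X$, every Euclidean geodesic triangle in $F$ is a genuine geodesic triangle of $X$. But $\mathbb{E}^2$ contains triangles in which a point on one side lies arbitrarily far from the union of the other two sides: taking vertices $(\pm N,0)$ and $(0,N)$, the origin sits on the base yet its distance to the other two sides is $N/\sqrt{2}$, which grows without bound. Hence geodesic triangles in $X$ are not uniformly thin and $X$ fails to be $\delta$-hyperbolic for any $\delta$, so hyperbolicity forces the absence of a flat.

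For the converse I would again prove the contrapositive, and here the geometric action is essential. Assuming $X$ is not $\delta$-hyperbolic, triangles are not uniformly thin, so for each $n$ there is a geodesic triangle $\Delta(x_n,y_n,z_n)$ and a point $p_n$ on $[x_n,y_n]$ with $d(p_n,[x_n,z_n]\cup[z_n,y_n]) \geq n$; in particular $d(p_n,x_n)$, $d(p_n,y_n)$ and $d(p_n,z_n)$ all exceed $n$, since each vertex lies on one of the far sides. Using cocompactness I would pick $g_n \in G$ carrying $p_n$ into a fixed compact set $K$ and replace the triangle by its $g_n$-translate. Properness together with Arzel\`a--Ascoli then allows me to pass to a subsequence along which the recentered side $g_n[x_n,y_n]$ converges uniformly on compacta to a bi-infinite geodesic line $\ell$ through a limit point $p_\infty \in K$, while the recentered segments $g_n[p_n,z_n]$ converge to a geodesic ray $\rho$ issuing from $p_\infty$; because $d(p_n,z_n)\to\infty$, this limit is a genuine ray and not a bounded segment.

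The heart of the matter is to verify that the limiting configuration bounds an honest flat plane rather than merely a large thin region, and this is where I expect the main obstacle to lie. The quantitative input is CAT(0) rigidity: since $p_n$ is at distance $\geq n$ from the other two sides, the Euclidean comparison triangles $\bar{\Delta}_n$ have inscribed radius tending to infinity, which forces the Alexandrov angle defects along the relevant subtriangles to vanish in the limit. A geodesic triangle whose angles sum to $\pi$ bounds a flat Euclidean triangle by the equality (rigidity) case of the CAT(0) comparison inequality, and a pair of asymptotic geodesic lines bounds a flat strip by the Flat Strip Theorem \cite[II.2.13]{BH}. Assembling these flat pieces along $\ell$ and $\rho$, using convexity of the metric to fill in the convex hull, should produce an isometrically embedded $\mathbb{E}^2$. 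The delicate points are controlling the convergence of all three sides simultaneously and upgrading a collection of limiting flat strips or a flat half-plane to a full plane; the cleanest route in practice may be to invoke the Flat Plane Theorem of Bridson--Haefliger \cite{BH} directly rather than to reconstruct this limiting argument in full.
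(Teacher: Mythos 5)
The paper itself offers no argument for this statement: it simply records it as a standard result and cites \cite[III.H.1.5]{BH}, so your closing remark that the cleanest route is to invoke Bridson--Haefliger directly is precisely what the paper does. Your forward direction is complete and correct: an isometrically embedded flat $F \cong \mathbb{E}^2$ is convex (uniqueness of CAT(0) geodesics forces the ambient geodesic between points of $F$ to be the Euclidean segment in $F$), so the arbitrarily fat Euclidean triangles you exhibit are genuine geodesic triangles of $X$, defeating $\delta$-hyperbolicity for every $\delta$.

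The converse sketch, however, has a genuine gap at exactly the point you flag, and the proposed mechanism for closing it does not work. The claim that inscribed radius tending to infinity ``forces the Alexandrov angle defects along the relevant subtriangles to vanish'' is not a consequence of CAT(0) comparison: comparison only bounds angle sums above by $\pi$, while fatness provides lower bounds on distances, which pull in the wrong direction. Concretely, in the Euclidean cone of cone angle $2\pi+\varepsilon$ (a proper CAT(0) space), the dilates of a fixed geodesic triangle whose sides enclose the apex have inscribed radius tending to infinity while the angle sum remains $\pi-\varepsilon$ by Gauss--Bonnet; this space contains flat sectors and even flat half-planes but no isometrically embedded $\mathbb{E}^2$. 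Of course the cone admits no geometric group action, which is the point: after your recentering and Arzel\`a--Ascoli step, cocompactness must be used \emph{again} to convert fatness into flatness (this is where the real work in \cite{BH} lies), and separately a flat strip or half-plane need not extend to a full plane, as the same example shows. Since these are exactly the steps you defer, the proposal does not constitute a proof of the hard direction; as written it is an honest reduction to the citation, which is the paper's own treatment.
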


This is a standard result from \cite[III.H.1.5]{BH}.

\begin{corollary} \label{nofixedpoints} Let $G$ act geometrically on a proper CAT(0) space $X$ with non-empty contracting boundary $\partial_cX$. If $G$ fixes a point in $\partial_cX$ then $G$ is virtually $\mathbb{Z}$.

\end{corollary}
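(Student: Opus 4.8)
The plan is to leverage the hypothesis that a \emph{contracting} boundary point is globally fixed in order to force $X$ to be genuinely $\delta$-hyperbolic, and then to invoke the classical fact that an infinite hyperbolic group fixing a point of its boundary is virtually cyclic. In other words, I expect the Flat Plane Theorem (Proposition~\ref{flatplanetheorem}), placed immediately before the statement, to be the pivot of the argument.

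First I would apply Lemma~\ref{lemma:fixedimpliescontracting} verbatim to the situation: the fixed point $\alpha^+$ has a contracting representative and is fixed by the cocompact action of $G$, so that lemma tells us that \emph{every} geodesic ray in $X$ is contracting; in particular $\partial_c X = \partial X$. Second, I would rule out flats. Suppose $\mathbb{E}^2 \hookrightarrow X$ were an isometrically (hence convexly) embedded flat, and pick a bi-infinite geodesic line $\ell$ inside it. Splitting $\ell$ into two rays at a point, each ray is contracting by the previous step, so by Lemma~\ref{lemma:concatenationsarecontracting}~(i) the whole line $\ell$ is contracting. But any line $\ell'$ parallel to $\ell$ inside the flat projects onto all of $\ell$ (the Euclidean nearest-point projection of $(s,c)$ onto the axis is $(s,0)$, sweeping out the entire axis as $s$ varies), so the projection of $\ell'$ onto $\ell$ has unbounded diameter, directly contradicting Lemma~\ref{lemma:boundedprojection}. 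Hence $X$ contains no Euclidean flat, and the Flat Plane Theorem gives that $X$ is $\delta$-hyperbolic; consequently $G$ is a hyperbolic group and $\partial_c X = \partial X$ is its Gromov boundary.

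To finish, since $\partial_c X \neq \varnothing$ (indeed $|\partial_c X| \geq 2$ by the preceding lemma), the group $G$ is infinite. Now $G = \operatorname{Stab}_G(\alpha^+)$, and it is classical that a boundary-point stabilizer in a hyperbolic group is virtually cyclic unless the whole group is elementary; equivalently, a non-elementary hyperbolic group has no global fixed point at infinity. Therefore $G$ must be elementary, and being infinite it is virtually $\mathbb{Z}$. If one prefers to stay internal to the paper rather than cite this, one can instead extract a rank-one isometry $g$ from Corollary~\ref{therearerankoneaxes}, use the north--south dynamics of $g$ on the visual boundary to see that the fixed point $\alpha^+$ must be one of the two endpoints $g^{\pm}$, and then combine the dichotomy ``$|\partial_c X| = 2 \iff G$ virtually $\mathbb{Z}$'' with the density of non-fixed orbits (Proposition~\ref{prop:orbitsaredense}) to exclude the existence of a third boundary point.

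The main obstacle is the flat-exclusion step: one must check carefully that ``every ray is contracting'' genuinely forbids flats, which rests on the parallel-line projection computation feeding into Lemma~\ref{lemma:boundedprojection}. A secondary point requiring care is that the concluding hyperbolic-group input must be the \emph{classical} Gromov-boundary result, which is logically independent of Theorem~\ref{thm:orbitsaredense}; since this corollary is a building block toward that theorem, invoking the theorem itself here would be circular, whereas invoking the known result for honestly $\delta$-hyperbolic spaces is not.
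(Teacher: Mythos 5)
Your proposal is correct and follows essentially the same route as the paper: Lemma~\ref{lemma:fixedimpliescontracting} makes every ray contracting, the Flat Plane Theorem (Proposition~\ref{flatplanetheorem}) then yields $\delta$-hyperbolicity of $X$ and hence of $G$ via \v{S}varc--Milnor, and the classical fact that a non-elementary hyperbolic group fixes no boundary point finishes the argument. Your explicit parallel-line projection computation merely fills in the flat-exclusion step that the paper treats as immediate, and your circularity concern is resolved exactly as in the paper, which likewise cites the classical hyperbolic-group result rather than Theorem~\ref{thm:orbitsaredense}.
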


\begin{proof} Let $\alpha \in \partial_cX$ be a fixed point. By Lemma~\ref{lemma:fixedimpliescontracting} we have that every geodesic in $X$ is contracting. In particular, we have that $X$ cannot contain a Euclidean flat and thus by The Flat Plane Theorem \ref{flatplanetheorem} $X$ is $\delta$-hyperbolic. \v{S}varc-Milnor then tells us that $G$ is a $\delta$-hyperbolic group. Note that in this case the contracting boundary is the Gromov boundary.

Recall that if a $\delta$-hyperbolic group is non-elementary i.e it is neither finite nor virtually cyclic, then it has no globally fixed points in its boundary. This is because it must contain an undistorted free group on two generators and the generators both act by North-South dynamics on the boundary with disjoint fixed points. For a proof of these facts see \cite[Chapter 8]{GH}. The group $G$ is then virtually $\mathbb{Z}$ and so we are done.

\end{proof}

\subsection{Proof of Theorem \ref{thm:conv}}

We will prove Theorem~\ref{thm:conv} by proving the easier to state theorem below.

\begin{theorem} \label{nsdynamics} Let $\gamma^+$ and $\gamma^-$ be points in the contracting boundary. If there is a sequence of isometries $g_i$ such that $g_ix \to \gamma^+$ and $g_i^{-1}x \to \gamma^-$ then for any compact set $K$ in $\partial_cX - \{\gamma^-\}$ and any open neighborhood, $U \subseteq \partial_cX$, of $\gamma^+$, $g_i(K) \subset U$ for large enough $i$.

\end{theorem}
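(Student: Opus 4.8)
The plan is to argue by contradiction, using the compactness of $K$ to reduce the uniform statement to a convergence statement for a single sequence of points. Suppose the conclusion fails for some $U$ and $K$. Then, after passing to a subsequence, there are points $\kappa_i \in K$ with $g_i\kappa_i \notin U$ for every $i$. Since $K = C \cap \partial_c^D X$ for a compact $C$ and some $D$ by Lemma \ref{compactsets}, $K$ is compact and metrizable, so after a further subsequence we may assume $\kappa_i \to \kappa$ in $\partial_c X$ with $\kappa \in K$; in particular $\kappa \ne \gamma^-$ and every ray $[x,\kappa_i)$ is $D$-contracting. It then suffices to prove that $g_i\kappa_i \to \gamma^+$ in $\partial_c X$, since this would force $g_i\kappa_i \in U$ for large $i$ and contradict the choice of $\kappa_i$. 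Writing $y_i = g_i^{-1}x$ and $\rho_i = [y_i,\kappa_i)$, the whole argument rests on the geometric claim that the rays $\rho_i$ pass uniformly close to the basepoint, i.e. $d(x,\rho_i) \le M$ for some $M$ independent of $i$; applying the isometry $g_i$ (and using $g_iy_i = x$) this says exactly that the representative $m_i := [x,g_i\kappa_i) = g_i\rho_i$ passes within $M$ of $g_ix$.

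To prove $d(x,\rho_i) \le M$ I will use visibility together with the stability of contracting geodesics. Since $\gamma^-$ and each $\kappa_i$ are distinct contracting points, Lemma \ref{lemma:visibility} gives bi-infinite geodesics $\ell_i = (\gamma^-,\kappa_i)$, and Lemma \ref{lemma:concatenationsarecontracting}(iii) (applied to the rays $[x,\gamma^-)$ and $[x,\kappa_i)$) shows these are $E$-contracting for a single constant $E = \Phi(A^-,D)$ depending only on the contracting constant $A^-$ of $\gamma^-$ and on $D$; in particular they are uniformly $\delta_E$-slim. On one hand, $\ell_i \to \ell := (\gamma^-,\kappa)$ by Arzel\`a--Ascoli and Lemma \ref{lemma:contractingflatsarebounded} (passing to a further subsequence, which is harmless for the contradiction), so $d(x,\ell_i)$ stays bounded; on the other hand, because $y_i \to \gamma^- = \ell_i(-\infty)$ while $\kappa_i = \ell_i(+\infty)$, the ray $\rho_i$ shares the endpoint $\kappa_i$ with $\ell_i$ and, via Lemma \ref{lemma:infiniteslimness} and convexity of the CAT(0) metric, fellow-travels within $O(\delta_E)$ the subray of $\ell_i$ running from near $\pi_{\ell_i}(y_i)$ out to $\kappa_i$. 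Since $\pi_{\ell_i}(y_i)$ runs off toward the $\gamma^-$ end, this fellow-travelled subray sweeps past $\pi_{\ell_i}(x)$, so $\rho_i$ passes within $d(x,\ell_i)+O(\delta_E)$ of $x$, a uniform bound $M$. I expect this step to be the main obstacle: the delicate point is that all the bounds must be uniform in $i$, which ultimately rests on the fact that $[x,\kappa_i)$ and $[x,\gamma^-)$ diverge uniformly once $\kappa_i$ is close to $\kappa \ne \gamma^-$. Making this divergence quantitative (rather than merely pointwise, as Lemma \ref{lemma:boundedprojection} would give with a non-uniform bound) is where the care is needed; I would isolate it as a short sublemma, proved by a convexity/fellow-traveling argument combined with $\kappa_i \to \kappa$.

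Granting $d(x,\rho_i) \le M$, the two hypotheses of Lemma \ref{convergenceincontractingboundary} for $m_i \to \gamma^+$ follow. For the uniform contracting bound (condition (1)), note that $m_i = g_i\rho_i$ is isometric to $\rho_i$, and $\rho_i$ lies within a uniform distance of the $E$-contracting subray of $\ell_i$ described above, so Lemma \ref{lemma:closetocontracting} (with Lemma \ref{lem:subsegments}) makes every $\rho_i$, hence every $m_i$, contracting with a single constant independent of $i$. For visual convergence (condition (2)), I use that $m_i$ passes within $M$ of $g_ix$: by Lemma \ref{boundedtrianglemagic}, the point of $m_i$ at parameter $t_0^{(i)} = d(x,g_ix)$ is within $2M$ of $g_ix = [x,g_ix](t_0^{(i)})$, so convexity gives $d(m_i(r),[x,g_ix](r)) \le 2M\,r/t_0^{(i)}$ for $r \le t_0^{(i)}$. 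Since $g_ix \to \gamma^+$ means $[x,g_ix] \to [x,\gamma^+)$ visually and $t_0^{(i)} \to \infty$, for each fixed $r$ the right-hand side tends to $0$ while $[x,g_ix](r) \to [x,\gamma^+)(r)$, whence $m_i(r) \to [x,\gamma^+)(r)$; this is precisely visual convergence $m_i \to \gamma^+$. By Lemma \ref{convergenceincontractingboundary} we conclude $g_i\kappa_i = m_i \to \gamma^+$ in $\partial_c X$, the desired contradiction.
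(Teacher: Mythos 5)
Your overall architecture is sound and in fact runs parallel to the paper's: using Lemma \ref{compactsets} to trap $K$ in a single stratum $\partial_c^DX$, and Lemma \ref{convergenceincontractingboundary} to convert ``visually convergent plus uniformly contracting'' into convergence in $\partial_cX$, plays exactly the role that Lemmas \ref{compactsets} and \ref{thickopensets} play in the paper; and your endgame for visual convergence (the ray $m_i$ passes within $M$ of $g_ix$, then Lemma \ref{boundedtrianglemagic} plus convexity of the metric) is a clean variant of the second half of the paper's Lemma \ref{NScore}. The problem is that the central estimate $d(x,\rho_i)\leq M$ is never actually proved, and the one justification you offer for its first ingredient is circular: you cannot invoke Arzel\`a--Ascoli to get $\ell_i\to\ell$ and deduce that $d(x,\ell_i)$ stays bounded, because extracting a convergent subsequence of the bi-infinite geodesics $\ell_i$ already requires knowing that they all meet a fixed ball about $x$ --- which is precisely the bound being claimed. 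You correctly identify the crux (uniform divergence of $[x,\kappa_i)$ from $[x,\gamma^-)$, using $\kappa_i\to\kappa\neq\gamma^-$ and the uniform constant $D$) and defer it to a sublemma, but this is the quantitative heart of the theorem, not a routine remark. The paper discharges it by making ``$K$ misses $\gamma^-$'' quantitative from the outset --- $K$ is visually compact, so $K\subseteq \partial_cX - U(\gamma^-,s,\varepsilon)$ for some $s,\varepsilon$ --- and then bounding the slim-triangle parameter $p_i$ on $[x,g_i^{-1}x]$ by the explicit maximum in Lemma \ref{NScore}; some argument of this explicit kind must be supplied before your contradiction closes.

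There is a second, unflagged gap in your condition (1). Only the tail of $\rho_i$ beyond the point $\delta_E$-close to $\pi_{\ell_i}(y_i)$ (via Lemma \ref{lemma:infiniteslimness} and convexity) fellow-travels $\ell_i$; the initial segment of $\rho_i$, from $y_i$ down to that point, has length comparable to $d(y_i,\ell_i)$, and visual convergence $g_i^{-1}x\to\gamma^-$ does \emph{not} bound $d(y_i,\ell_i)$, so ``$\rho_i$ lies within a uniform distance of $\ell_i$'' is false as stated and Lemma \ref{lemma:closetocontracting} cannot be applied to all of $\rho_i$. Nor can the initial segment be dismissed: a geodesic running from a distant point toward its projection on a contracting geodesic need not be uniformly contracting at all --- in the paper's Salvetti-complex example such a segment can cross a $bc$-flat, and a straight segment of length $T$ inside a flat is no better than roughly $T$-contracting. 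The paper's proof of Theorem \ref{thm:conv} splits the ray differently: it takes the slim point of the triangle with vertices $x$, $g_ix$ and ideal vertex $g_i\alpha$, so that the first piece of $[x,g_i\alpha)$ is uniformly close to $[x,g_ix]$ (uniformly contracting because $g_ix$ stays boundedly close to $\gamma(t)$, an input the paper fixes when setting up Lemma \ref{NScore}) and the second piece is asymptotic to the isometric, hence still uniformly contracting, ray $g_i[x,\alpha)$, the constants then being assembled via Lemmas \ref{lemma:closetocontracting} and \ref{lemma:concatenationsarecontracting}. You should rework your condition (1) along these lines (equivalently, in your pulled-back picture, split $\rho_i$ at the slim point of the triangle $y_i$, $x$, $\kappa_i$, with the first piece near $[y_i,x]$ and the second near the $D$-contracting $[x,\kappa_i)$), noting that this requires the bounded-distance input about the orbit points which your $\ell_i$-based substitute does not supply.
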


We can loosen the hypothesis that the $g_i^{-1}$ converge to $\gamma^-$ to obtain Theorem~\ref{thm:conv} from Theorem~\ref{nsdynamics}. By a result of Ballman--Buyalo \cite{BB08} if $g_ix \to \gamma^+$ then (passing to a subsequence if necessary) the inverses converge to something in the boundary, lets call it $\gamma^-$. Because the contracting constants of the geodesic $[x,g_i^{-1}x]$ (by Definition~\ref{sequenceconvergence}) are uniformly bounded above by some uniform constant $B$ you can bound the contracting constant of every finite subinterval of $\gamma^-$ by $B+1$ (and in fact by $B$ with a little more work). Thus $\gamma^-$ is contracting as well.

Because open sets in $\partial_cX$ can be much finer than in the visual boundary it is not \textit{a priori} obvious that there will be any form of North-South dynamics on the contracting boundary. The important observation is that all open sets around $\gamma^+$ have a ``B-contracting core'' which contains the set of all B-contracting elements which are nearby to $\gamma^+$ in the visual topology. Because the action by $g_i$ coarsely preserves the contracting constants in $K$, (and because they are already bounded) you can push the set $K$ into the "core" of $U$ with the dynamics of the visual boundary and establish that it is in fact a subset of $U$.

Note: I think this is \textit{not} enough to use the ping-pong lemma because compact sets and neighborhoods aren't compliments of each other like they are with the visual topology. This makes me suspect that there is a decent chance this applies to the Morse boundary (where the ping-pong lemma fails in general see \cite{F15}). Because of this I include a proof of a known dynamics result (Lemma~\ref{NScore}) on the visual boundary of a CAT(0) space which I believe will be amenable to generalization onto the Morse boundary.

The proof will be broken up into two lemmas in order to simplify the discussion. 

For the following we will assume that $X$ is a proper CAT$(0)$ space with non-empty contracting boundary and a group of isometries $G$ acting geometrically. 

\begin{lemma} \label{thickopensets} Let $V$ be an open set in the contracting boundary containing a point $\gamma$, then for each positive constant $B$ there is an $r$ and an $\epsilon$, depending only on, $B$, $\gamma$ and $V$ such that $\partial_c^BX_x \cap U_{\gamma}(r,\epsilon) \subset V$.

\end{lemma}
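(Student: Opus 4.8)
The plan is to reduce the statement entirely to the visual boundary, where the topology is well behaved, and to exploit the defining feature of the direct limit topology together with Lemma \ref{lem:componentsareclosed}. The key observation is that the desired conclusion only concerns \emph{$B$}-contracting rays, so I only ever need to understand how $V$ meets the single component $\partial_c^BX_x$, not the whole direct limit.

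First I would record that, since $V$ is open in the direct limit $\partial_cX_x$, its intersection with the component $\partial_c^BX_x$ is open in the subspace (visual) topology on that component; this is precisely the topological-embedding property of the components noted just after the definition of $\partial_cX$. Equivalently, $\partial_c^BX_x \setminus V$ is closed in $\partial_c^BX_x$. Now I invoke Lemma \ref{lem:componentsareclosed}: the component $\partial_c^BX_x$ is itself closed in the visual boundary $\partial X$, and hence $\partial_c^BX_x \setminus V$ is a closed subset of $\partial X$ in the visual topology.

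Next I would separate $\gamma$ from this closed set. Since $\gamma \in V$, we have $\gamma \notin \partial_c^BX_x \setminus V$, so $\gamma$ lies in the visually open set $\partial X \setminus (\partial_c^BX_x \setminus V)$. Because the sets $U(\gamma,r,\epsilon)$ form a neighborhood basis at $\gamma$ in the visual topology, I can choose $r$ and $\epsilon$, depending only on $\gamma$, $B$, and $V$ through this neighborhood, so that $U(\gamma,r,\epsilon) \subseteq \partial X \setminus (\partial_c^BX_x \setminus V)$. Intersecting with $\partial_c^BX_x$ then yields $\partial_c^BX_x \cap U(\gamma,r,\epsilon) \subseteq V$, which is exactly the claim.

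I expect no serious analytic obstacle here; the entire content is careful bookkeeping between the fine direct limit topology and the coarse visual topology. The one step that must be handled with attention is the very first one: it is only because the components are topologically embedded (so open sets of $\partial_cX_x$ restrict to visually open sets of $\partial_c^BX_x$) and closed in $\partial X$ (Lemma \ref{lem:componentsareclosed}) that the problem collapses to a Hausdorff separation in the visual boundary. I would emphasize in particular that $\gamma$ need not itself be $B$-contracting: the argument never uses $\gamma \in \partial_c^BX_x$, only the weaker fact that $\gamma \notin \partial_c^BX_x \setminus V$.
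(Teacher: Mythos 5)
Your proof is correct, but it takes a genuinely different route from the paper's. The paper argues by contradiction using sequences: if no $(r,\epsilon)$ worked, one could extract $B$-contracting rays $\eta_n \in U(\gamma,n,1)\setminus V$, which converge to $\gamma$ visually and have uniformly bounded contracting constant, hence converge to $\gamma$ in $\partial_cX$ by the sequential criterion of Lemma \ref{convergenceincontractingboundary}, contradicting that $V$ is a neighborhood of $\gamma$. You instead give a direct, sequence-free separation argument: $V$ open in the direct limit forces $V\cap\partial_c^BX_x$ open in the component (this needs only continuity of the inclusion $\partial_c^BX_x \hookrightarrow \partial_cX_x$, which holds by the very definition of the direct limit topology --- you attribute it to the topological-embedding property, which is stronger than what you use, but that is harmless), so $\partial_c^BX_x\setminus V$ is closed in the component and hence, by Lemma \ref{lem:componentsareclosed}, closed in $\partial X$; since $\gamma$ avoids this closed set and the $U(\gamma,r,\epsilon)$ form a visual neighborhood basis at $\gamma$, some $U(\gamma,r,\epsilon)$ misses it, which is the claim. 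Your version is cleaner and arguably more robust: it never invokes sequential reasoning in the non-first-countable space $\partial_cX$ (the paper's use is legitimate, since only the sufficient direction of Lemma \ref{convergenceincontractingboundary} is needed, and it implicitly uses convexity to get visual convergence from $\eta_n\in U(\gamma,n,1)$, a small step left unstated), and it isolates exactly which structural facts matter --- continuity of the inclusions plus closedness of the components --- which makes the potential generalization to settings like the Morse boundary more transparent. You are also right, and it is worth emphasizing, that $\gamma$ need not lie in $\partial_c^BX_x$ for either argument.
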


\begin{proof} We can do this by contradiction. Assume that for some $B$ no such $r$ and $\epsilon$ existed. Then for each $n \in \mathbb{N}$ we could find an element of $\partial_c^BX_x \cap U_{\gamma}(n,1)$ which is not in $V$. Thus we have a sequence of geodesics $\eta_n$ such that $\eta_i \in U_{\gamma}(n,1)$ for all $i \geq n$ which is no more than $B$-contracting. Because this is precisely the condition for convergence of a sequence in the contracting boundary laid out in Lemma~\ref{convergenceincontractingboundary} we have that $\eta_n \to \gamma$ but that the $\eta_n$ are not in $V$. Because $V$ is a neighborhood of $\gamma$ this is a contradiction. $\Rightarrow\Leftarrow$

\end{proof}

The following lemma is a direct consequence of the $\pi$-convergence due to Popasolgu and Swenson in \cite{PS}. This lemma should be generalizable to the Morse boundary so we will provide a different proof which does not rely on the Tits-metric and so is likely easier to generalize.

\begin{lemma} \label{NScore}Let $\gamma^+, \gamma^-$ be elements in $\partial_cX$ and $g_i$ be a sequence of group elements such that $g_ix \to \gamma^+$ and $g_i^{-1}x \to \gamma^-$ in $\overline{X}_c$. For any neighborhoods of $\gamma^-$ and $\gamma^+$ in $\partial X$ of the form $U_{\gamma^-}(s,\varepsilon)$ and $U_{\gamma^+}(r,\epsilon)$, there is an $N$ such that for all points $\alpha$ in the set $\partial_cX - U_{\gamma^-}(s,\varepsilon)$ we have $g_i(\alpha) \subset U_{\gamma^+}(r,\epsilon)$ for all $i \geq N$.

\end{lemma}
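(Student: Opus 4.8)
The plan is to prove the statement in the visual topology by reducing everything to a single projection estimate onto the contracting ray $[x,\gamma^-)$ and then transporting it by the isometries $g_i$. Throughout, fix the four constants $s,\varepsilon,r,\epsilon$, let $A^-$ and $A^+$ be contracting constants for the rays $[x,\gamma^-)$ and $[x,\gamma^+)$, and let $\delta^-=\delta_{A^-}$, $\delta^+=\delta_{A^+}$ be the associated slimness constants from Lemma \ref{lemma:slimgeodesics}. Since a representative of $g_i\alpha$ based at $x$ is the Arzel\`{a}--Ascoli limit of the segments $[x,g_ia(T)]$ as $T\to\infty$, it suffices to control these finite geodesics on $[0,r]$, uniformly in $\alpha$ and in $T$, for all large $i$; the membership $g_i\alpha\in U(\gamma^+,r,\epsilon)$ will follow by passing to the limit.

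First I would establish the one genuinely new estimate: a \emph{uniform} bound on the projection of a diverging ray onto the contracting ray $[x,\gamma^-)$. Concretely, I claim there is a constant $D=D(A^-,s,\varepsilon)$ so that every ray $\alpha$ from $x$ with $\alpha\notin U(\gamma^-,s,\varepsilon)$ (that is, $d(a(s),\gamma^-(s))\ge\varepsilon$) satisfies $d\big(x,\pi_{[x,\gamma^-)}(a(t))\big)\le D$ for all $t$. The proof I have in mind combines two ingredients: convexity of $t\mapsto d(a(t),\gamma^-(t))$, which forces $d(a(t),\gamma^-(t))\ge \frac{t}{s}\varepsilon$ for $t\ge s$, and the slimness of $[x,\gamma^-)$ from Lemma \ref{lemma:infiniteslimness}. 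If the projection of some $a(t)$ sat at parameter $P$, slimness would produce a point $a(\tau)$ on $\alpha$ within $\delta^-$ of that projection point, whence $a$ and $\gamma^-$ are within $2\delta^-$ of one another at a time $\tau\approx P$; the linear-divergence bound then forces $\tau$, and hence $P$, to be at most $\max\{s,\,2\delta^- s/\varepsilon\}+\delta^-$. The crucial point is that this $D$ depends only on $A^-,s,\varepsilon$, not on the individual $\alpha$ (in particular not on the contracting constant of $\alpha$), which is exactly the uniformity the statement demands.

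Next I would transport this by $g_i$ and feed in both convergence hypotheses. Applying $g_i$, the projection of the ray $g_i\alpha$ onto the contracting ray $R_i:=[g_ix,g_i\gamma^-)$ stays within $D$ of $g_ix$. The hypothesis $g_ix\to\gamma^+$ gives, via Lemma \ref{lemma:closetocontracting}, that $[x,g_ix]$ is uniformly contracting and fellow-travels $[x,\gamma^+)$ on $[0,r]$ within $\epsilon/3$ once $i$ is large. The hypothesis $g_i^{-1}x\to\gamma^-$ gives (again using slimness and contraction of $R_i$, applied contrapositively to the tracking principle below) that the projection of $x$ onto $R_i$ marches off to infinity as $i\to\infty$. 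Thus the geodesic $[x,g_ia(T)]$ has its two endpoints projecting onto the $A^-$-contracting ray $R_i$ at parameters that are $\to\infty$ (the endpoint $x$) and $\le D$ (the endpoint $g_ia(T)$); by the standard tracking property of contracting geodesics --- a geodesic whose endpoints project far apart on a contracting geodesic must fellow-travel it between the projections, a consequence of Lemmas \ref{lemma:boundedprojection} and \ref{lemma:infiniteslimness} --- the segment $[x,g_ia(T)]$ passes within a constant $\delta^{**}=\delta^{**}(A^-)$ of $g_ix$.

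Finally I would assemble the pieces by CAT(0) convexity. Knowing that $[x,g_ia(T)]$ comes within $\delta^{**}$ of $g_ix$ at a parameter $\ell\approx d(x,g_ix)=:L$, and that $[x,g_ix]$ tracks $\gamma^+$ on $[0,r]$, a comparison of the two geodesics issuing from $x$ (whose endpoints are $\delta^{**}$-close) shows $d\big([x,g_ia(T)](t),\gamma^+(t)\big)<\epsilon$ on $[0,r]$ once $L$ is large, the error being of order $r\delta^{**}/L$. Letting $T\to\infty$ yields $g_i\alpha\in U(\gamma^+,r,\epsilon)$, and since every constant and threshold above depends only on $s,\varepsilon,r,\epsilon,A^\pm$ and on the convergence of $g_i^{\pm1}x$ --- never on the particular $\alpha$ --- a single $N$ works simultaneously for all $\alpha\in\partial_cX- U(\gamma^-,s,\varepsilon)$. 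The main obstacle, and the place where the contracting hypotheses are indispensable, is the third step: proving both that the projections onto $R_i$ are uniformly small for the whole family $\{g_i\alpha\}$ and that the projection of $x$ escapes to infinity, and packaging the tracking property so that $\delta^{**}$ is independent of $\alpha$. The divergence estimate of the second step is precisely what makes this uniformity possible.
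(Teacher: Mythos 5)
Your proposal is correct in substance, and at its core it runs on the same engine as the paper's proof: the hypothesis $\alpha\notin U(\gamma^-,s,\varepsilon)$ plus convexity of $t\mapsto d(a(t),\gamma^-(t))$ forces linear divergence $d(a(t),\gamma^-(t))\geq \tfrac{t}{s}\varepsilon$, which combined with slimness yields a uniform bound --- your projection bound $D(A^-,s,\varepsilon)$, the paper's bound on the parameter $p_i$ --- and applying $g_i$ then converts this into the statement that the representative of $g_i\alpha$ based at $x$ travels toward $\gamma^+$ for distance roughly $d(x,g_ix)\to\infty$, after which a convexity comparison on $[0,r]$ finishes, with all constants independent of $\alpha$. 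The implementation, however, is genuinely different in the middle. The paper never projects onto the fixed ray $[x,\gamma^-)$: it forms the ideal triangle with vertices $x$, $g_i^{-1}x$ and $\alpha$ over the geodesics $c_i=[x,g_i^{-1}x]$, uses slimness of $c_i$ to produce a center triple $c_i(p_i)$, $b_i(y_i)$, $a(z_i)$ of pairwise distance at most $\delta_B$, and exploits the fact that $b_i(y_i)$ already lies on $b_i=[g_i^{-1}x,\alpha)$, whose $g_i$-image \emph{is} the representative of $g_i\alpha$ based at $x$. This makes both your tracking step and your Arzel\`{a}--Ascoli limiting step unnecessary. Conversely, your route buys some robustness: by projecting onto the fixed ray $[x,\gamma^-)$ (fixed constant $A^-$) and using only uniform-on-compacta visual convergence of $g_i^{\pm 1}x$, you avoid the paper's unproved assertion that the $c_i$ lie within a uniform $D$ of $\gamma^-$ (a bound that does not follow from visual convergence alone). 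Note that your own parallel claim --- that $[x,g_ix]$ is uniformly contracting via Lemma \ref{lemma:closetocontracting} --- suffers from exactly this defect, since it needs $g_ix$ boundedly close to $[x,\gamma^+)$; but your final step uses only the $\epsilon/3$-fellow-traveling of $[x,g_ix]$ with $\gamma^+$ on $[0,r]$, which visual convergence does give, so you should simply delete the uniform-contraction claim.

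The one outsourced ingredient you must supply is the ``standard tracking property'': it is not among the paper's lemmas, and Lemmas \ref{lemma:boundedprojection} and \ref{lemma:infiniteslimness} do not directly yield it --- the slimness statement controls geodesics with one endpoint \emph{on} the contracting ray, whereas $[x,g_ia(T)]$ has both endpoints off $R_i$. The fact is true with constants depending only on $A^-$ and is provable inside the paper's toolkit (for instance: if $u'$ lies on the segment with $d(u,u')<d(u,R_i)$, contraction caps the projection jump at $A^-$; since the endpoint projections are $P_i-D\to\infty$ apart, the segment cannot stay far from $R_i$ near the parameter range of $\pi_{R_i}(g_ia(T))$, and one application of Lemma \ref{lemma:infiniteslimness} then localizes it within a uniform $\delta^{**}$ of that projection point, hence within $\delta^{**}+D$ of $g_ix$), but as written this is a gap you are waving at rather than closing, and it is precisely the work the paper's triangle-center device sidesteps. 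Two small further points: your divergence claim for $\pi_{R_i}(x)$ also needs the contraction property (projection of $c_i(T)$ lands at parameter at least $T-1$ by the $1$-Lipschitz property of projections, and a bounded projection of $g_i^{-1}x$ would violate contraction once $T>M+A^-+1$), and when you let $T\to\infty$ the strict inequality defining $U(\gamma^+,r,\epsilon)$ degrades to a non-strict one, so keep the headroom $\epsilon/3+O(r\delta^{**}/L)<\epsilon$ explicit, as your sketch implicitly does.
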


\begin{proof} 

\begin{figure}[h]
\begin{tikzpicture}

\draw[<->] (0,0) -- (12,0) node[pos=0,anchor=east]{$\gamma^-$} node[pos=0.95,anchor=north]{$x$} node[pos=0.85,circle,fill,scale=0.3](x){} node[pos=0.12,fill,scale=0.3,circle](Pgi){} node[pos=0.18,anchor = north east]{$\gamma^-(p_{g_i^{-1}x})$} node[pos=0.57,fill,scale=0.3,circle](PA){} node[pos=0.57,anchor = north]{$\gamma^-(p_\alpha)$};

\draw[->] (x) ..controls (7.0,0.2).. (6.5,5.5) node (A) {} node[anchor = south east]{$\alpha$};

\draw (x) ..controls (1.4,0.3).. (1,4.6) node [pos = 0.17,circle,fill,scale = 0.3](pi){} node [circle,fill,scale = 0.3](gix){} node [anchor = south east]{$g_i^{-1}x$};

\node at(pi) [anchor=south east]{$c_i(p_i)$};

\draw[help lines] (gix) -- (Pgi);
\draw[help lines] (PA) -- ($(A.south)-(0.15,0)$);

\node at ($(Pgi)+(1.5,1.1)$) (j1){};
\draw (gix) ..controls ($(Pgi)+(-0.1,1.2)$) and ($(Pgi)+(0.5,1.2)$) .. (j1);

\draw[<-] ($(A.south)-(0.35,0)$) ..controls ($(PA)-(0.3,-1.1)$) and ($(PA)-(0.5,-1.1)$) .. ($(PA)-(1.2,-1.1)$) node [pos = 0.6,scale=0.3,fill, circle](yi){} node [scale=0] (j2){};

\node at (yi) [anchor = south east]{$b_i(y_i)$};

\draw (j1.west) -- (j2.east);

\draw[help lines] (PA) -- (yi);

\end{tikzpicture}
\caption{$p_i$ is uniformly bounded}
\label{fig:boundingpi}
\end{figure}

For the sake of simplicity we can assume that the base point $x$ is on a geodesic from $\gamma^-$ to $\gamma^+$. Through an abuse of notation we will conflate the representatives of $\gamma^-$ and $\gamma^+$ starting at $x$ with the elements $\gamma^-$ and $\gamma^+$. Let us denote the geodesic $[x,g_i^{-1}x]$ by $c_i$. Let $a$ denote the parametrized geodesic from $x$ to $\alpha$ and $b_i$ be the geodesic from $g_i^{-1}x$ to $\alpha$. For the following argument refer to Figure~\ref{fig:boundingpi}.

Denote by $\gamma^{-}(p_\alpha)$ the projection of $\alpha$ onto $\gamma^-$. This exists provided that $\gamma^+ \neq \alpha$, in the case where such a projection is unbounded the geodesic $b_i$ is asymptotic to $\gamma^+$ and in place of $\gamma^-(p_\alpha)$ a point sufficiently far along $\gamma^+$ will suffice since $b_i$ is one leg of a slim ideal triangle. Because $\alpha \in \partial_cX - U_{\gamma^-}(s,\varepsilon)$, there is a uniform bound on $|p_\alpha|$ which depends only on $\gamma^+, \gamma^-, s,$ and $ \varepsilon$. Similarly, denote the projection of $g_i^{-1}x$ onto $\gamma$ by $\gamma^{-}(p_{g_i^{-1}x})$. 

Because the $c_i$ converge to $\gamma^-$, in $\bar{X_c}$, they are uniformly contracting. Thus the triangle given by $\gamma^-$ , $g^{-1}x$, $x$ is $\delta$ slim for some $\delta$ only depending on the bound of the contracting constants of the $c_i$ . A standard argument shows that given an $M'$, for all sufficiently large $i$, we have the inequality $|p_{g^{-1}x} -  p_{\alpha}| > M'$. Choosing the $M'$ from Lemma~\ref{betterthinrectangles} gives us that there is a $y_i$ such that $d(\gamma^-(p_\alpha),b_i(y_i)) < M'$. For large enough $i$ we also have a point on $c_i$, say $c_i(p_i)$, so that  $d(c_i(p_i),\gamma^-(p_\alpha)) <1$. This gives us a bound on the distance $d(c_i(p_i),b_i(y_i)) \leq M'+1$.

Note that the length of $[g_i^{-1}x,b_i(y_i)]$ is no shorter than $|p_{g_i^{-1}x} - p_\alpha| - 2M'$, and so we can make this length larger than $\frac{2(M'+1)r}{\epsilon}$ by picking yet larger $i$.

Shifting the picture by applying the isometry $g_i$ gives us Figure~\ref{fig:convergenceofgibi}. The previous estimation was done to arrange it so that $g_ia$ is in $U_{g_ic_i}(r,\frac{\epsilon}{2})$. Because the $g_ic_i$ converge to $\gamma^+$ we can assume that $g_ic_i$ is in $U_{\gamma^+}(r,\frac{\epsilon}{2})$. By setting $N$ to be the largest of the previous $i$'s we get that $g_ia \in U_{\gamma^+}(r,\epsilon)$. Note that none of the previous estimates depend on $\alpha$, (including the bound on $|p_\alpha|$).

\begin{figure}
\begin{tikzpicture}

\draw[<->] (0,0) -- (12,0) node[pos=1,anchor=west]{$\gamma^+$} node[pos=0.05,anchor=north]{$x$} node[pos=0.05,circle,fill,scale=0.3](x){} node[pos=0.66,circle,fill,scale=0.3](R){};

\node at (R) [anchor = north]{$\gamma^+(r)$};

\draw[->] (x) ..controls (8.8,0.7).. (8.4,3.4) node [pos = 0.47,circle,fill,scale=0.3](r'){} node[pos=0.72,circle,fill,scale=0.3](w1){} node[anchor = south east]{$g_i\alpha$};

\node at (w1) [anchor=south east]{$g_i(b_i(y_i))$};

\draw (x) -- (11,0.7) node[anchor=west]{$g_ix$} node[circle,fill,scale=0.3](gix){} node[pos=0.83,fill,scale=0.3,circle](P){} node[pos=0.8,anchor = north west]{$g_i(c_i(p_i))$} node [pos=0.69,scale=0.3,circle,fill](r){};

\draw[help lines] (w1)--(P) node [pos=0.5,anchor=south west,color=black]{$M'+1$};

\draw[help lines] (r) -- (R) node [pos = 0.5,anchor = west,color = black]{$\frac{\epsilon}{2}$};

\draw[help lines] (r') -- (r)node [pos = 0.3,anchor = west,color = black]{$\frac{\epsilon}{2}$};

\end{tikzpicture}

\caption{Convergence of the $g_i(b_i)$.}
\label{fig:convergenceofgibi}
\end{figure}
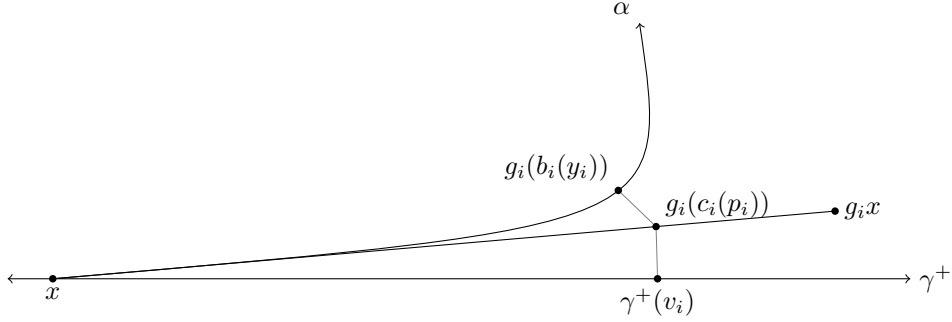

\end{proof}

\begin{proof}[Proof of Theorem \ref{thm:conv}]

We may assume that $x$ is on the bi-infinite geodesic $\gamma$ from $\gamma^-$ to $\gamma^+$. Let $K$ be a compact set in $\partial_cX - \gamma^-$ and $U$ an open set containing $\gamma^+$ in $\partial_cX$. By Lemma~\ref{compactsets} there is a uniform $A$ such that all elements $\alpha$ in $K$ (with basepoint $x$) are no more than $A$-contracting.

Because $[x,g_ix] \to \gamma^+$ by Lemma~\ref{sequenceconvergence} they are no more than $B$-contracting where $B$ depends only on the sequence of $g_i$. For all $i$ and all $\alpha \in K$ by Lemma~\ref{lemma:concatenationsarecontracting} the geodesic $[x,g_i\alpha)$ is $\Phi_{\ref{lemma:concatenationsarecontracting}}(A,B)$-contracting because the geodesics $[x,g_ix]$ is $B$-contracting and $[g_i x,g_i\alpha)$ is $A$-contracting. For notational convenience set $C = \Phi_{\ref{lemma:concatenationsarecontracting}}(A,B)$.

By Lemma~\ref{thickopensets} for the contracting constant $C$ there is an $r$ and an $\epsilon$ such that $\partial_c^CX_x\cap U_{\gamma^+}(r,\epsilon) \subset U$. Because $K$ is a compact set in the contracting boundary by Lemma~\ref{compactsets} it is also a compact set in $\partial X - \{\gamma^-\}$, so there is an $s$ and an $\varepsilon$ such that $K \subseteq \partial X_x - U_{\gamma^-}(s,\varepsilon)$. So applying Lemma~\ref{NScore}, for large enough $i$ we have that $g_i(K) \subset U_{\gamma^+}(r,\epsilon)$, but we already know that $g_i(K) \subset \partial_c^CX_x$ and so $g_i(K) \subset U$.

\end{proof}

\section{A characterization of \texorpdfstring{$\delta$}{delta}-hyperbolicity}
\label{sec:metrizability}

One of the ways in which the behavior of the contracting boundary diverges from that of the Gromov boundary is in its local topology. The Gromov boundary comes equipped with a family of visual metrics that induce the same topology on the boundary, making it a compact, complete, metric space. For the contracting boundary this happens only in the rarest of circumstances. It is quite easy to cook up examples of spaces which have non-metrizable contracting boundary. The following is one such example.

Consider again our favorite RAAG, $A_\Gamma = \langle a, b, c \; | \; [b,c]\rangle$, along with the universal cover of its Salvetti complex, $\tilde{X}$. The infinite word $w = aaaa \cdots$ corresponds to a 0-contracting geodesic in $\tilde{X}$ which starts at some lift of the natural base point $x$ in $X$ (see figure \ref{fig:salvetti}). If we let $w_i^j = a^ib^jaaa\cdots$, this corresponds to an infinite geodesic starting at the lift of $x$ which is exactly $j$-contracting (i.e. it is not $B$-contracting for any $B< j$). It is clear that for each fixed $j$ the sequences $\{w_i^j\}_{i \in \mathbb{N}}$ converge to $w$ in the contracting boundary. Now if we construct a new sequence by picking an $i$ for each $j$, i.e. we choose a function $f: \mathbb{N} \to \mathbb{N}$, then regardless of our choice of $f$ the new sequence $\{w^j_{f(j)}\}_{j\in\mathbb{N}}$ will never converge to $w$. This is because the set $\{w^j_{f(j)}\}_{j\in\mathbb{N}}$ is closed in $\partial_c\tilde{X}$, as its intersection with each component, $\partial_c^D\tilde{X}_x$, is finite and therefore closed.

It is a general fact for all first countable spaces that if you have a countable collection of sequences which all converge to the same point, it is always possible to pick a `diagonal' sequence which also converges. i.e. if we have $\{x_i^j\}$ such that $\displaystyle \lim_{i} x_i^j = x$ there is always some function $f : \mathbb{N} \to \mathbb{N}$ such that $\displaystyle \lim_j x^j_{f(j)} = x$. The proof of this is an elementary exercise in point-set topology. Because this is impossible in the above example we can see that the contracting boundary of $\tilde{X}$ cannot be metrizable.

Of course, for some CAT(0) spaces, the contracting boundary is metrizable, any CAT(-1) spaces for instance. It turns out that this is completely generic, the metrizability of the contracting boundary completely characterizes $\delta$-hyperbolicity of cocompact CAT(0) spaces. 

\begin{theorem} \label{thm:characterizationofhyperbolicity} Assume that there is a group $G$ acting geometrically on a complete proper CAT(0) space, $X$, with $|\partial_cX| > 2$ then the following are equivalent:

\smallskip

\begin{sublemmas}

\item $X$ is $\delta$-hyperbolic.

\smallskip
\item The contracting constants are bounded i.e. $\partial_cX_{x_0} \subseteq \partial_c^D X_{x_0}$ for some $D$. 

\smallskip
\item The map $Id: X \to X$ induces a homeomorphism $\partial X \cong \partial_cX$.

\smallskip
\item $\partial X \subseteq \partial_cX$ i.e. as sets the visual boundary and the contracting boundary are the same.

\smallskip
\item $\partial_cX$ is compact.

\smallskip
\item $\partial_cX$ is locally compact.

\smallskip
\item $\partial_cX$ is first-countable, and in fact metrizable.

\end{sublemmas}
\end{theorem}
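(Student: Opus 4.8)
The plan is to prove the seven conditions equivalent by establishing a cycle of implications together with a few direct links, aiming to funnel every hypothesis back through condition (ii), the boundedness of contracting constants, which I regard as the technical heart of the whole theorem. First I would dispatch the "easy" arrows. The implication (i) $\Rightarrow$ (ii) is immediate: in a $\delta$-hyperbolic space every geodesic is $\delta$-slim, so by Lemma~\ref{lemma:slimgeodesics} every ray is $\Phi_{\ref{lemma:slimgeodesics}}(\delta)$-contracting, giving a uniform bound $D$. Given (ii), condition (iv) is immediate since every visual ray is then contracting, and (iii) follows because $\partial_cX = \partial_c^DX$ becomes a single component carrying the subspace topology from $\partial X$; combined with Lemma~\ref{lem:componentsareclosed} and properness this makes $Id$ a homeomorphism. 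From (iii) one reads off (v) (the visual boundary $\partial X$ is compact), and (v) $\Rightarrow$ (vi) is trivial. The genuinely substantive arrows are the ones that must \emph{extract} a uniform bound from a weak topological hypothesis, and these all have the same flavor.

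The key idea, which I would isolate as a reusable lemma, is the following dichotomy coming from Lemma~\ref{compactsets} and Lemma~\ref{convergenceincontractingboundary}: \emph{if the contracting constants are unbounded, then one can manufacture a closed, discrete, non-compact "escaping" set.} Concretely, if there is a sequence $a_i(t)$ of rays with contracting constants $D_i \to \infty$, then after passing to a subsequence (using properness and Arzel\`a--Ascoli) we may assume $a_i \to a_\infty$ in the visual boundary while each $a_i$ is strictly more contracting than all its predecessors; the set $\{a_i\}$ then meets every component $\partial_c^DX$ in a finite set, hence is closed in $\partial_cX$, and is visibly non-compact and without limit points in $\partial_cX$. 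This single construction powers the contrapositives of (v) $\Rightarrow$ (ii), (vi) $\Rightarrow$ (ii), and (vii) $\Rightarrow$ (ii). For (v): the escaping set, being closed in a compact space, would have to be compact, contradiction. For (vii): the escaping set exhibits exactly the failure of the diagonal-sequence property that the RAAG example before the theorem shows obstructs first-countability and hence metrizability. The main work is to guarantee that unbounded contracting constants actually \emph{force} such a visually convergent escaping sequence to exist near a limit point, and this is where the geometric input---visibility (Lemma~\ref{lemma:visibility}), the bounded-projection property (Lemma~\ref{lemma:boundedprojection}), and crucially the density of orbits from Theorem~\ref{thm:orbitsaredense}---enters.

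The hardest arrow, and the one I expect to be the main obstacle, is (vi) $\Rightarrow$ (ii): local compactness is a strictly weaker hypothesis than compactness, so I cannot simply quote that a closed subset of a compact space is compact. Here I would exploit the cocompact group action seriously. Under the standing hypothesis $|\partial_cX| > 2$, Theorem~\ref{thm:orbitsaredense} tells us the orbit $G\gamma$ of \emph{any} contracting point $\gamma$ is dense, and the dynamics of Theorem~\ref{thm:conv}/Lemma~\ref{NScore} let us push any compact set arbitrarily close to a chosen attracting fixed point of a rank-one isometry (which exists by Corollary~\ref{therearerankoneaxes}). If $\partial_cX$ were locally compact, pick a compact neighborhood $N$ of such a fixed point $g^\infty$; by Lemma~\ref{compactsets} this $N$ lies inside a single component $\partial_c^DX$, so its contracting constants are bounded by some $D$. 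Using the north-south dynamics of $g$ together with minimality, I would argue that the $G$-translates of the interior of $N$ cover $\partial_cX$ while the isometric action coarsely preserves contracting constants (Lemma~\ref{lemma:closetocontracting}), so the bound $D$ propagates to a uniform bound over the whole boundary, yielding (ii). Making the phrase "coarsely preserves" quantitatively uniform across all of $G$---so that the constant does not blow up under translation---is the delicate point, and I would handle it by tracking basepoint displacement through Lemma~\ref{lemma:closetocontracting} and invoking cocompactness to bound that displacement. Finally (ii) $\Rightarrow$ (i) closes the cycle via the Flat Plane Theorem~\ref{flatplanetheorem}: if $X$ contained a Euclidean flat $\mathbb{E}^2$ it would contain rays of arbitrarily large contracting constant (diagonals of larger and larger flats project with larger shadows), contradicting the uniform bound, so $X$ has no flat and is $\delta$-hyperbolic.
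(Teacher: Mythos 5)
Your outline gets the easy arrows right and correctly locates the difficulty in the arrows that extract a uniform bound from a topological hypothesis, but two of your load-bearing steps have genuine gaps. First, you declare that (ii) $\Rightarrow$ (iv) is ``immediate since every visual ray is then contracting.'' It is not: condition (ii) bounds the constants only of rays that are \emph{already} contracting, and says nothing a priori about non-contracting rays. Deducing (iv) from (ii) is the substantive content of the paper's (ii) $\Rightarrow$ (iii) step, and it is proved dynamically: one takes any ray $b(t)$, uses density of the orbit of a contracting point in the \emph{visual} boundary (Corollary \ref{por:orbitsarevisuallydense}, which itself needs that no point of $\partial_cX$ is globally fixed, via Corollary \ref{nofixedpoints} and $|\partial_cX|>2$) to get $g_ia(t)\to b(t)$ visually, and then invokes the closedness of the $D$-components (Lemma \ref{lem:componentsareclosed}, packaged as Remark \ref{unboundedcontractingconstants}) to conclude that if $b$ were non-contracting the constants of the $g_ia$ would be unbounded, contradicting (ii). The same gap sinks your direct (ii) $\Rightarrow$ (i): the diagonals of a Euclidean flat are not contracting at all, hence are not points of $\partial_cX$, so a flat does not contradict (ii); the Flat Plane Theorem can only be applied after one knows \emph{every} ray is contracting, i.e.\ after (iv), which is exactly the paper's route.

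Second, your proposed (vi) $\Rightarrow$ (ii) does not close. If $h^{-1}\beta$ lies in a compact neighborhood $N\subseteq\partial_c^DX_x$, then $\beta$ is $D$-contracting only from the basepoint $hx$; re-basing at $x$ via Lemma \ref{lemma:closetocontracting} costs a term linear in $d(x,hx)$, and this displacement is unbounded over the infinitely many $h$ needed for the translates of $\operatorname{int}(N)$ to cover $\partial_cX$ --- cocompactness bounds the displacement of points of $X$ from the orbit, not $d(x,hx)$ itself, and initial segments of length comparable to $d(x,hx)$ can run through flat regions and genuinely have constant comparable to their length. The mechanism your sketch never identifies, and which is the technical heart of the paper's proofs of both (vi) $\Rightarrow$ (ii) and (vii) $\Rightarrow$ (ii), is Lemma \ref{actionpreservesconstants}: a \emph{lower} bound $\Psi(\tilde B,A)$ showing that for a rank-one isometry $g$, forming $[p,g^nb(\infty))$ cannot decrease the minimal contracting constant by more than a fixed affine amount. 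With it, rays of constant at least $16(i+16A+77\delta_A+38)$ can be steered by visual north--south dynamics into an \emph{arbitrary} neighborhood $U$ of an \emph{arbitrary} point while remaining at least $i$-contracting, so the closed sets $C_i=\{c_j\}_{j\geq i}$ give an open cover of $U$ with no finite subcover and $U$ is not compact; similarly (vii) $\Rightarrow$ (ii) needs the doubly indexed family $k_i^j(\infty)\to a(\infty)$ with constants trapped between $j$ and $\Phi(A,\tilde B_j)$ --- your escaping set alone is closed and discrete but does not by itself contradict first countability at any point. (Your (v) $\Rightarrow$ (ii), by contrast, is fine: it is Lemma \ref{compactsets} applied to $K=\partial_cX$.)
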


\bigskip

In order to prove these equivalences we need a bit more fine control over how the contracting constants change under the group action. When there is a rank-one isometry you can say precisely how the contracting constants are changing as you act on a contracting ray. We will make that more precise below, but first we need some notation.

\emph{Notation:} If $b$ is a $B$-contracting geodesic in some CAT(0) space $X$ then I will denote the minimum of all contracting constants $\tilde{B} := \min\{B \; | \; b \mbox{ is  $B$-contracting }\}$.

\begin{lemma} \label{actionpreservesconstants} Let $g$ be a rank one isometry of a CAT(0) space $X$ whose axis, $a$, is $A$-contracting. If $b$ is a $B$-contracting geodesic with $b(0) = a(0)= p$, then $k_n = [p,g^nb(\infty))$ will be a $K$-contracting geodesic such that 

		$$ \Psi(\tilde{B},A) \leq  K  \leq  \Phi(A,B) $$
\smallskip

\noindent where $\displaystyle \Psi(\tilde{B},A) = \frac{\tilde{B} - 16A -77\delta_A - 38}{16} - 3$ and $\Phi(A,B)$ is as in Lemma~\ref{lemma:concatenationsarecontracting}.

\end{lemma}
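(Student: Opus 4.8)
The plan is to treat the two inequalities separately: the upper bound $K \le \Phi(A,B)$ is essentially immediate from the concatenation lemma, while the lower bound is where the geometric content lies. Throughout, let $\ell$ be the translation length of $g$, so that $g^n$ carries the axis to itself by $g^n a(t) = a(t+n\ell)$; in particular $g^n p = a(n\ell)$ lies on the axis, and since $g^n$ is an isometry the ray $g^n b = [g^n p, g^n b(\infty))$ is again $B$-contracting. For the \emph{upper bound} I would decompose the picture at $g^n p$ into two rays emanating from it: the sub-ray of the axis running backwards through $p = a(0)$ (which is $A$-contracting, using the paper's convention that sub-segments keep the constant $A$) and the ray $g^n b$. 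Since $p$ lies on the first ray, Lemma \ref{lemma:concatenationsarecontracting}(iii) applies verbatim and shows that $k_n = [p, g^n b(\infty))$ is $\Phi(A,B)$-contracting, as desired.

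For the \emph{lower bound}, the crucial observation is that, $g^n$ being an isometry, $g^n b$ has the same minimal contracting constant $\tilde B$ as $b$. The strategy is to show that $g^n b$ runs close to a sub-ray of $k_n$ and then to feed this into Lemma \ref{lemma:closetocontracting} to bound $\tilde B$ from above in terms of the constant $K$ of $k_n$. First I would locate a point of $k_n$ near $g^n p$: by Lemma \ref{lemma:boundedprojection}, whose projection bound $D_0$ depends only on the contracting constant $A$ of the axis, and since $g^n b$ begins at the point $g^n p \in a$, the projection $\pi_a(g^n b(\infty))$ lies within $D_0$ of $g^n p$. Applying the $\delta_A$-slimness of $a$ to the pair $(y,z) = (g^n b(\infty), p)$, in the limiting form justified by Arzel\`a--Ascoli exactly as in Lemma \ref{lemma:concatenationsarecontracting}(iii), produces a point $w = k_n(s_n)$ on $k_n$ with $d(w, \pi_a(g^n b(\infty))) \le \delta_A$, whence $d(w, g^n p) \le \delta_A + D_0$.

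Now the sub-ray $k_n|_{[s_n,\infty)} = [w, g^n b(\infty))$ and $g^n b = [g^n p, g^n b(\infty))$ are asymptotic, so their initial points are within $\delta_A + D_0$ while their ideal endpoints coincide. The sub-ray is a sub-segment of the $K$-contracting geodesic $k_n$, hence $(K+3)$-contracting by Lemma \ref{lem:subsegments}, so applying the ray version of Lemma \ref{lemma:closetocontracting} with parameters $(K+3,\ \delta_A + D_0,\ 0)$ shows that $g^n b$ is $\Phi_{\ref{lemma:closetocontracting}}(K+3, \delta_A + D_0, 0)$-contracting. Since $\tilde B$ is the minimal constant of $g^n b$, this gives $\tilde B \le 16(K+3) + 28(\delta_A + D_0) + 10$; solving for $K$ and substituting the value of $D_0$ in terms of $A$ and $\delta_A$ yields exactly $K \ge \Psi(\tilde B, A)$. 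The degenerate case $b(\infty) \in \{a(\pm\infty)\}$, in which $k_n$ is eventually constant in $n$ and the projection may be unbounded, is handled separately and trivially.

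I expect the main obstacle to be the lower bound, and within it the control of the projection constant $D_0$: the entire estimate hinges on $D_0$ being bounded in terms of $A$ and $\delta_A$ alone, with no dependence on $B$, for otherwise the resulting inequality cannot match the stated $\Psi(\tilde B, A)$. A secondary technical point is the passage from the finite-segment statement of Lemma \ref{lemma:closetocontracting} to the asymptotic rays, which is the same Arzel\`a--Ascoli limiting argument already used for Lemma \ref{lemma:concatenationsarecontracting}(iii) and is what forces the ideal-endpoint parameter $D'$ to vanish in the limit.
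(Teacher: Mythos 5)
Your upper bound is correct and is exactly the paper's argument: decompose at $g^np$ into the backward sub-ray of the axis and the ray $g^nb$, then apply Lemma \ref{lemma:concatenationsarecontracting}(iii) with $x = p$; your version is in fact spelled out more carefully than the paper's one-line citation.

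The lower bound has a genuine gap, and it is exactly the point you flagged as the main obstacle: the projection constant $D_0$ does \emph{not} depend only on $A$. Lemma \ref{lemma:boundedprojection} asserts only that the projection of a given geodesic onto a contracting geodesic has \emph{finite} diameter; the bound depends on the geodesic being projected, not merely on the contracting constant. Already in $\mathbb{H}^2$, where every geodesic is $A_0$-contracting for a universal $A_0$, a ray $b$ issuing from $p = a(0)$ at angle $\theta$ to the axis has $\pi_a(b(\infty))$ escaping to infinity along $a$ as $\theta \to 0$. Equivalently, $g^nb$ may fellow-travel the axis for an arbitrarily long time after $g^np$ before diverging, so the branch point $w$ where $k_n$ separates from $g^nb$ need not lie within any uniform distance $\delta_A + D_0$ of $g^np$, and your single application of Lemma \ref{lemma:closetocontracting} with parameters $(K+3,\ \delta_A + D_0,\ 0)$ cannot yield a bound of the form $\Psi(\tilde B, A)$.

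The paper circumvents this by splitting $g^nb$ at the slimness triple point rather than at $g^np$: $\delta_A$-slimness of the axis produces $x \in k_n$, $y \in a$, $z \in g^nb$ pairwise within $2\delta_A$; the initial piece $[g^np, z]$ --- however long --- fellow-travels the axis segment $[g^np, y]$ and is therefore $\Phi_{\ref{lemma:closetocontracting}}(A+3, 0, \delta_A)$-contracting \emph{independently of its length}, while the tail $[z, g^nb(\infty))$ is $2\delta_A$-close to the sub-ray $[x, k_n(\infty))$ and hence $\Phi_{\ref{lemma:closetocontracting}}(K+3, 2\delta_A, 2\delta_A)$-contracting; Lemma \ref{lemma:concatenationsarecontracting}(i) applied to the geodesic $g^nb$, which is the concatenation of these two pieces, then gives $\tilde B \leq 16(K+3) + 16A + 77\delta_A + 38$, which rearranges to $K \geq \Psi(\tilde B, A)$ (the paper phrases this as a contradiction against the minimality of $\tilde B$). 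Your correct observations --- that $g^nb$ has minimal constant $\tilde B$ by isometry invariance, and that the comparison must route through a sub-ray of $k_n$ via Lemma \ref{lem:subsegments} and Lemma \ref{lemma:closetocontracting} --- survive intact; what is missing is this two-piece decomposition, which makes the distance from the branch point to $g^np$ irrelevant and removes any need for a uniform projection bound.
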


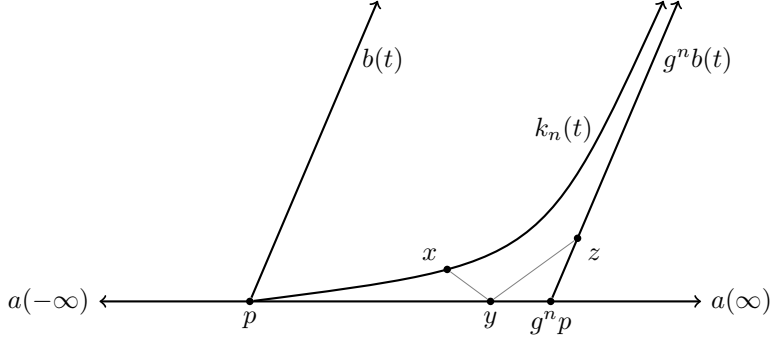
\begin{figure}
\begin{tikzpicture}[thick]
	
	\node[anchor=north] at (0,0){$g^np$} node[fill,circle,scale=0.3] at (0,0) (gnp){};
	\node[anchor=north] at (-4,0){$p$} node[fill,circle,scale=0.3] at (-4,0) (p){};
	\draw[<->] (-6,0) -- (2,0) node[pos=1,anchor= west]{$a(\infty)$} node[pos=0,anchor= east]{$a(-\infty)$} node[fill,circle,scale=0.3,pos=0.65](y){} node[pos=0.65,anchor=north]{$y$};
	\draw[->] (p) ..controls (-0.1,0.5).. (1.5,4) node[circle,fill,scale=0.3,pos=0.3](x){} node[pos=0.3,anchor=south east]{$x$} node[pos=0.8,anchor=east]{$k_n$};
	\draw[->] (gnp) -- (1.7,4)
node[circle,fill,scale=0.3,pos=0.2](z){} node[pos=0.2,anchor=north west]{$z$} node[pos=0.8,anchor=west]{$g^nb$};
    \draw[->] (p) -- (-2.3,4)node[pos=0.8,anchor=west]{$b$};
	\draw[help lines] (x) -- (y); 
	\draw[help lines] (y) -- (z); 

\end{tikzpicture}
\caption{Periodic isometries coarsely fix contracting constants} \label{fig:fixedcontractingconstant}
\end{figure}

\begin{proof} Consider the geodesics $g^nb$ and $k_n$. By Lemma~\ref{lemma:concatenationsarecontracting}, because $a$ is $A$ contracting and $b$ is $B$ contracting, the geodesic $k_n$ is at most $\Phi_{\ref{lemma:concatenationsarecontracting}}(A,B)$-contracting. 

Assume for the sake of contradiction that $k_n$ is $K$-contracting with $$\displaystyle K < \frac{\tilde{B} - 16A -77\delta_A - 38}{16} - 3$$

In particular this gives us that $K+3 < \frac{\tilde{B} - 16A -77\delta_A - 38}{16}$

\bigskip
Because $a$ is $\delta_A$-slim and by replacing $\delta_A$ with $2\delta_A$ if necessary, then there is an $x \in k_n$ a $y \in a$ and a $z \in g^nb$ with $d(x,y) \leq \delta_A$ and $d(y,z) \leq \delta_A$. Now because $k_n$ is $K$ contracting the subsegment $[x,k_n(\infty)]$ is $K+3$-contracting. The geodesic $[z,g^nb(\infty)]$ is within the $2\delta_A$ neighborhood of $[x,k_n(\infty)]$ so Lemma~\ref{lemma:closetocontracting} gives us an explicit upper bound on the contracting constant for $[z,g^nb(\infty)]$. In particular we know that it is at worst $\Phi_{\ref{lemma:closetocontracting}}(K+3,2\delta_A,2\delta_A)$-contracting where

$$\Phi_{\ref{lemma:closetocontracting}}(K+3,2\delta_A,2\delta_A) = 16(K+3)+70\delta_A+10$$

\bigskip

Similarly, we can see that $[g^np,z]$ is $\Phi_{\ref{lemma:closetocontracting}}(A+3,0,\delta_A) = (16A + 7\delta_A + 10)$-contracting. Now $g^nb$ is the concatatination of $[g^np,z]$ and $[z,g^nb(\infty))$ and so we get that it is at most $B' = \Phi_{\ref{lemma:closetocontracting}}(K+3,2\delta_A,2\delta_A)+\Phi_{\ref{lemma:closetocontracting}}(A+3,0,2\delta_A)$-contracting. 

\bigskip
Working everything out, the assumption that we made gives us the following inequality:

$$\begin{array}{ccl}
B'   & = &       16(K+3) + 16A + 77\delta_A + 38 \\
    &   &                                       \\
    & < &    16\left(\frac{\tilde{B} - 16A -77\delta_A - 38}{16}\right) + 16A + 77\delta_A + 38 \\
    &   &                                       \\
    & = &       \tilde{B} \end{array}$$

\noindent But then $g^nb$ is $B$-contracting with $B < \tilde{B}$ which is a contradiction $\Rightarrow\Leftarrow$.

\bigskip
\noindent So we know that $k_n$ is $K$-contracting where $K \geq \Psi(\tilde{B},A)$

\end{proof}

Corollary \ref{therearerankoneaxes} provides us with a rank-one axis whenever the contracting boundary is non-empty and so Lemma~\ref{actionpreservesconstants} gives us fine tuned control over the contracting constants under the action of that rank-one isometry.

\begin{remark} \label{unboundedcontractingconstants}Suppose we have a sequence of contracting geodesics $\{k_n\}$ and another \emph{non}-contracting geodesic $b$ all with the same base point. If the end points $k_n(\infty)$ converge to $b(\infty)$ in the visual boundary, then the contracting constants for $k_n$ are unbounded.

\end{remark}

We now have all of the ingredients we needed in order to prove the main theorem.

\begin{proof}[Proof of Theorem \ref{thm:characterizationofhyperbolicity}] I will first prove the equivalence of $(i)$ through $(iv)$. The equivalence of $(v), (vi)$, and $(vii)$ with the others will then be easier to show. 

\bigskip
$(i) \Longrightarrow (ii)$ The slim triangle condition for a $\delta$-hyperbolic space is easily seen to imply the slim geodesic condition that we have been using, for an explicit proof see \cite{CS13}. Because every geodesic is uniformly $\delta$-slim by the hyperbolicity condition they all have uniform contracting constants by Lemma~\ref{lemma:slimgeodesics}.

\bigskip

$(ii) \Longrightarrow (iii)$ Note that because the contracting constants are bounded, the directed system stabilizes i.e. the collection of contracting geodesics has the subspace topology induced from the visual boundary. Thus, if we can prove that every infinite ray is contracting we would be done, since the contracting boundary will then have the same topology as the entire visual boundary. 

Let $b$ be some geodesic ray in $X$ and pick any $a \in \partial_cX$, by Corollary~\ref{por:orbitsarevisuallydense} there is a sequence of $\{g_i\}$ such that $g_ia \to b$. If $b$ is not contracting then by Remark~\ref{unboundedcontractingconstants} the contracting constants of the representatives of $g_ia$ that start at $x = a(0)$ are growing without bound, contradiction $\Rightarrow\Leftarrow$.

\bigskip

$(iii) \Longrightarrow (iv)$ This is obvious.

\bigskip

$(iv) \Longrightarrow (i)$ This follows from The Flat Plane Theorem. i.e. since $\partial X \subseteq \partial_cX$ we have that every geodesic in $X$ is contracting, thus there are no non-contracting geodesics. In particular this implies that there are no Euclidean planes embedded in $X$, but by the above theorem this implies that $X$ is $\delta$-hyperbolic.

\bigskip
$(iii) \implies (v)$ For a proper CAT(0) space $\partial X$ is compact.

\bigskip
$(v) \implies (vi)$ This is trivial.

\bigskip
$(vi) \implies (ii)$ Assume $(ii)$ is false, then I will show that $\partial_cX$ is not locally compact. 

Let $\alpha$ be some element of $\partial_cX$ and let $U$ be an arbitrary neighborhood of $\alpha$. By Corollary~\ref{therearerankoneaxes} there is some rank-one isometry, $g$, and by Theorem~\ref{thm:orbitsaredense} we may assume that the forward end point of $g$ is in the interior of $U$. Let $a$ be an axis of $g$ and let $A$ be its contracting constant.

By assumption there is a subset $B = \{b_i\}$ such that the minimal contracting constant of each $b_i$ is bounded below by $16(i + 16A + 77\delta_A+38)$. Applying Theorem~\ref{thm:orbitsaredense} with the $g^n$ as the $g_i$ and switching the roles of $a$ and $b_i$ we can see that $g^nb_i(\infty)$ converges to $a(\infty)$. In particular, for each $i$ there is an $n$, say $n_i$, such that $g^{n_i}b_i(\infty)$ is in $U$. Let the geodesics $[a(0),g^{n_i}b_i(\infty))$ be denoted by $c_i$. Applying Lemma~\ref{actionpreservesconstants} we get that the $c_i$ are at least $i$ contracting.

 Let the collection $\mathcal{C} = \{C_i\}$ where $\displaystyle C_i = \{c_j\}_{j \geq i}$. Each set $C_i$ is a closed subset of $U$ and $\bigcap C_i = \varnothing$. The collection $\{U\setminus C_i\}$ will then be an open cover of $U$ with no finite refinement and so U is not compact. Since $\alpha$ and $U$ were arbitrary $\partial_cX$ is not locally compact.

\bigskip
$(i) + (iii) \Longrightarrow (vii)$ The Gromov boundary of a $\delta$-hyperbolic group is metrizable and since the contracting boundary is homeomorphic to the visual boundary we are done.

\bigskip
$(vii) \Longrightarrow (ii)$ Assume that (ii) is false, that there is no upper bound on the contracting constants of the contracting boundary. We will show that the contracting boundary is not first countable (and thus not metrizable). 

\bigskip
As with the example in the introduction to this Section it is enough to exhibit a collection $\{\alpha_i^j\}$ and an $\alpha$ in the contracting boundary such that for each $j$, $\alpha_i^j \to \alpha$ as $i \to \infty$, but the $\alpha_i^j$ are at best $j$ contracting. In particular this means that for any function $f : \mathbb{N} \to \mathbb{N}$ the sequence $\alpha_{f(j)}^{j}$ will not converge to $\alpha$. This is because the intersection of $\{\alpha_{f(j)}^j\}$ with $\partial_c^DX_{x_0}$ will always be finite and thus the set $\{\alpha_{f(j)}^j\}$ is closed. We've already seen that the existence of such a sequence contradicts first countability. 

\bigskip
The construction of the $\alpha_i^j$'s aren't particularly hard in light of Lemma~\ref{actionpreservesconstants}. Since $\partial_cX$ is non-empty we have by Proposition~\ref{therearerankoneaxes} a rank-one isometry $g$ with axis $a$. Now since $a$ is rank-one it has a contracting constant $A$ and is $\delta_A$ slim. We are assuming that there is no upper bound on the contracting constants for geodesics so pick a geodesic $b^j$ with a minimal contracting constant $\tilde{B_j}$ of at least $16j + 16A + 77\delta_A + 38$. By Lemma~\ref{actionpreservesconstants} the geodesics $k_i^j = [b(0),g^ib^j(\infty)]$ will be $K$-contracting where $j \leq \Psi(\tilde{B_j},A)\leq K \leq \Phi_{\ref{actionpreservesconstants}}(A,\tilde{B}_j)$.

So we have our collection of points in the contracting boundary $\{k_i^j(\infty)\}$. For each $j$ the geodesics $k_i^j$ have a fixed upper bound on their contracting constants. To get convergence in the visual boundary recall that a rank one isometry acts by North-South dynamics on the visual boundary \cite{H09}. Thus $\displaystyle \lim_i k_i^j(\infty) = a(\infty)$ for each $j$ in $\partial_cX$ and the contracting constants are bounded below by $j$. This gives us that $\partial_cX$ cannot be first countable.

\end{proof}

\bibliographystyle{utphys}
\bibliography{contracting_boundary}

\end{document}